\newtheorem{thm}{Theorem}[section]
\newtheorem{lem}[thm]{Lemma}
\newtheorem{cor}[thm]{Corollary}
\newtheorem{prop}[thm]{Proposition}
\theoremstyle{definition}
\newtheorem{definition}[thm]{Definition}
\newtheorem{note}[thm]{Note}
\theoremstyle{remark}
\newcommand{\R}{\mathbf{R}}
\newcommand{\RP}{\mathbf{RP}}
\newcommand{\Z}{\mathbf{Z}}
\newcommand{\ol}[1]{{\overline #1}}
\newcommand{\w}{\widetilde}
\newcommand{\C}{\mathcal{C}}
\renewcommand{\S}{\mathbf{S}}
\renewcommand{\tilde}{\widetilde}
\DeclareMathOperator{\inte}{int}
\DeclareMathOperator{\conv}{conv}
\DeclareMathOperator{\sign}{sign}
\begin{document}

\title[Tangent lines, inflections, and vertices]{Tangent lines, inflections, and vertices\\ of  closed  curves}

\author{Mohammad Ghomi}
\address{School of Mathematics, Georgia Institute of Technology,
Atlanta, GA 30332}
\email{ghomi@math.gatech.edu}
\urladdr{www.math.gatech.edu/$\sim$ghomi}
\subjclass{Primary: 53A04, 53C44; Secondary: 57R45, 58E10}
\date{Last Typeset \today.}
\keywords{Inflection, vertex, torsion, flattening point, curve shortening, mean curvature flow, four vertex theorem, tennis ball theorem, singularity, skew loop, tangent line.}
\thanks{Research of the author was supported in part by NSF Grant DMS-0336455.}

\begin{abstract}
We show that every smooth closed curve $\Gamma$  immersed in Euclidean space $\R^3$ satisfies the sharp inequality $2(\mathcal{P}+\mathcal{I})+\mathcal{V}\geq 6$ which relates the numbers $\mathcal{P}$ of pairs of parallel tangent lines, $\mathcal{I}$ of inflections (or points of vanishing  curvature),  and $\mathcal{V}$ of  vertices (or points of  vanishing torsion) of $\Gamma$. We also show that $2(\mathcal{P^+}+\mathcal{I})+\mathcal{V}\geq 4$, where $\mathcal{P}^+$ is the number of pairs of concordant parallel tangent lines. The proofs, which employ curve shortening flow with surgery, are based on  corresponding inequalities  for the numbers of double points, singularities, and inflections  of closed  curves in the real projective plane $\RP^2$ and the sphere $\S^2$ which intersect every closed geodesic. These findings extend  some classical  results  in  curve theory including  works of M\"{o}bius, Fenchel, and Segre, which is also known as Arnold's  ``tennis ball theorem".
\end{abstract}

\maketitle

\tableofcontents

\section{Introduction}
To every point of a regular closed space curve  $\gamma\colon\S^1\simeq\R/2\pi\to\R^3$ 
there corresponds a  matrix  $m:=(\gamma',\gamma'',\gamma''')$  composed of the first three derivatives of $\gamma$. 
Naturally one is interested in degeneracies of $m$, which come in two geometric flavors: \emph{inflections} or points of vanishing curvature, and \emph{vertices} or points of vanishing torsion.  Here we show how inflections and vertices are related to the global arrangement of the tangent lines of $\gamma$, which leads us to extend some  well known theorems of classical curve theory.
To state our first result, let us say that  a pair of parallel tangent lines of $\gamma$ are \emph{concordant} if the corresponding tangent vectors  point in the same direction, and are \emph{discordant} otherwise. A vertex  is called \emph{genuine} if it has a connected open neighborhood where the torsion, or equivalently $\det(m)$, vanishes just once and changes sign.

\begin{thm}\label{thm:main}
Let $\gamma\colon\S^1\to\R^3$ be a regular $\C^{3}$  closed curve, and $\mathcal{P}$ be the number of  pairs of distinct  points in $\S^1$ where tangent lines of $\gamma$  are parallel. Further let   $\mathcal{I}$ and $\mathcal{V}$ be the numbers of inflections and vertices of $\gamma$ respectively. Then 
\begin{equation}\label{eq:1}
2(\mathcal{P}+\mathcal{I})+\mathcal{V}\geq 6.
\end{equation}
Furthermore, if $\mathcal{P}^+(\leq \mathcal{P})$ denotes  the number of \emph{concordant} pairs of   parallel tangent lines, then
\begin{equation}\label{eq:1.5}
2(\mathcal{P}^++\mathcal{I})+\mathcal{V}\geq 4.
\end{equation}
Finally, if $\gamma$ has only finitely many vertices, then $\mathcal{V}$ may denote the number of \emph{genuine} vertices.
\end{thm}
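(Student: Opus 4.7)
The plan is to translate the $\R^3$ inequalities into statements about the tangent indicatrix of $\gamma$, viewed both in $\S^2$ and in $\RP^2$, and then invoke the paper's core inequalities for closed curves on these surfaces that intersect every closed geodesic.

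First I would set up the tangent indicatrix $T := \gamma'/|\gamma'| \colon \S^1 \to \S^2$, along with its projection $\tilde T := \pi \circ T \colon \S^1 \to \RP^2$ under the antipodal quotient $\pi$, and verify the following translation dictionary: a self-intersection of $T$ in $\S^2$ records a concordant parallel tangent pair of $\gamma$, while a self-intersection of $\tilde T$ in $\RP^2$ records an arbitrary parallel pair, so $D(T) = \mathcal{P}^+$ and $D(\tilde T) = \mathcal{P}$; a singular point of $T$, where $T' = 0$, is precisely an inflection of $\gamma$, so $S(T) = S(\tilde T) = \mathcal{I}$; and a geodesic inflection of $T$, meaning a transverse crossing of $T$ with a great circle, corresponds to a genuine vertex of $\gamma$ via the identity
\[\det(\gamma', \gamma'', \gamma''') \;=\; |\gamma'|^3\,\langle T,\; T' \times T'' \rangle,\]
so $I(T) = I(\tilde T)$ equals $\mathcal{V}$ (or the genuine vertex count when vertices are isolated).

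Next, by Fenchel's theorem the image of $T$ is not contained in any open hemisphere; therefore $T$ meets every great circle of $\S^2$, and $\tilde T$ meets every closed geodesic of $\RP^2$. Moreover $T$ and $\tilde T$ are null-homotopic, the latter since it lifts to the closed loop $T$. I would then invoke the paper's two auxiliary lemmas: a closed $\C^1$ curve on $\S^2$ meeting every great circle satisfies $2(D+S)+I \geq 4$, and a closed $\C^1$ curve on $\RP^2$ meeting every closed geodesic satisfies $2(D+S)+I \geq 6$. Substituting the dictionary yields \eqref{eq:1.5} and \eqref{eq:1}. For the final clause, since only transverse crossings of $T$ with a great circle contribute to $I(T)$, a non-genuine vertex (where $T$ is tangent to a great circle without crossing) simply does not enter the count; thus when $\gamma$ has only finitely many vertices, the inequalities hold with $\mathcal{V}$ interpreted as the genuine vertex count.

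The hard part will be proving the two auxiliary lemmas on $\S^2$ and $\RP^2$, which is where curve shortening flow with surgery enters: one evolves the curve and tracks how $D$, $S$, and $I$ can change under smooth evolution (where only certain transitions are permitted) and under the surgeries inserted to resolve singularities and degenerate crossings, arguing that the curve must eventually simplify to a simple closed geodesic and that the transitions along the way force the initial totals to exceed the stated thresholds. By comparison, the indicatrix reduction itself is classical and amounts to routine differential-geometric bookkeeping.
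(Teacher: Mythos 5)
Your proposal is correct and follows essentially the same route as the paper: pass to the tantrix $T$, use Fenchel's theorem to place the origin in its convex hull (equivalently, to make it meet every great circle), and translate singularities, geodesic inflections, and double points of $T$ on $\S^2$ and of its projection to $\RP^2$ into inflections, vertices, and concordant/arbitrary parallel tangent pairs, so that Theorem \ref{thm:main2} (whose $\RP^2$ consequence is exactly Theorem \ref{cor:main3}) yields \eqref{eq:1.5} and \eqref{eq:1}. The one caveat is that the genuine-vertex clause does not follow from your remark that ``only transverse crossings contribute''---an inflection of $T$ is any zero of geodesic curvature, not only a sign change---but rather from the finiteness clause already built into Theorem \ref{thm:main2}.
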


Thus if a closed space curve has nonvanishing curvature and torsion (which is a generic condition with respect to the $\C^1$-topology), then it must have at least three pairs of parallel tangent lines including at least two concordant pairs. This observation is meaningful because there exist closed space curves, called \emph{skew loops}, without any pairs of parallel tangent lines.  Skew loops, which were first studied by  Segre \cite{segre:global}, may be constructed in each knot class \cite{wu:knots}, and have been the subject of several recent works \cite{ghomi:shadow, ghomi:shadowII, ghomi:tangents, ghomi&tabachnikov, ghomi&solomon, solomon:central} due in part to their interesting  connections with quadric surfaces. We should also mention that \eqref{eq:1.5} had been observed by Segre \cite{segre:global}  in the special case where $\mathcal{P^+}=\mathcal{I}=0$, while \eqref{eq:1} appears to be entirely new.

As is often the case in space curve theory, Theorem \ref{thm:main} follows from a reformulation of  it in terms of spherical curves, which is of interest on its own. To derive  this deeper result first note that, assuming $\gamma$ has  unit speed,
 the  \emph{tantrix} and \emph{curvature} of $\gamma$ are given by $T:=\gamma'$, and $\kappa:=\|T'\|$ respectively.
 Thus inflections of $\gamma$  correspond precisely to \emph{singularities} of $T\colon\S^1\to\S^2$, or points where $T'$ vanishes. Otherwise,
the \emph{geodesic curvature} of $T$ in $\S^2$, given by  
\begin{equation*}
k:=\frac{\langle T'',T\times T'\rangle}{\kappa^3}=\frac{\tau}{\kappa},
\end{equation*}
is well-defined, where $\tau$ is the torsion of $\gamma$.
 Thus vertices of $\gamma$ correspond precisely to \emph{(geodesic) inflections} of $T$, as is well known  \cite{fenchel}. Further, the tangent lines of $\gamma$ are parallel at $t$, $s\in\S^1$ provided that $T(t)=\pm T(s)$, and are concordant if $T(t)=T(s)$. It is also well known that a spherical curve forms the tantrix of a space curve, if, and only if, it contains the origin of $\R^3$ in the relative interior of its convex hull \cite{fenchel, ghomi:knots}. Via these observations,  inequalities \eqref{eq:1} and \eqref{eq:1.5} follow respectively from inequalities \eqref{eq:2} and \eqref{eq:2.5} below. In continuing the analogy with Theorem \ref{thm:main}, we  say  that an inflection is \emph{genuine} if it has a connected open neighborhood where curvature vanishes just once and changes sign.

\begin{thm}[The Main Result]\label{thm:main2}
Let $\gamma\colon\S^1\to\S^2$ be a  $\C^{2}$ closed curve, $\mathcal{S}$ be the number of singular points of $\gamma$, $\mathcal{I}$ be the number of its inflections, and $\mathcal{D}$ be the number of  pairs of points $t\neq s\in\S^1$ where $\gamma(t)=\pm \gamma(s)$. Suppose that   $\Gamma:=\gamma(\S^1)$ contains the origin in  its convex hull. Then 
\begin{equation}\label{eq:2}
2(\mathcal{D}+\mathcal{S})+\mathcal{I}\geq 6.
\end{equation}
Further, if $\mathcal{D}^+ (\leq\mathcal{D})$ denotes the number of  pairs of points $t\neq s\in\S^1$ where $\gamma(t)= \gamma(s)$, then
\begin{equation}\label{eq:2.5}
2(\mathcal{D}^++\mathcal{S})+\mathcal{I}\geq 4.
\end{equation}
Furthermore, if $\Gamma$ is \emph{symmetric}, i.e., $\Gamma=-\Gamma$, then
\begin{equation}\label{eq:2.75}
2(\mathcal{D}^++\mathcal{S})+\mathcal{I}\geq 6.
\end{equation}
Finally, if $\gamma$ has only finitely many inflections, then $\mathcal{I}$ may denote the number of \emph{genuine} inflections.
\end{thm}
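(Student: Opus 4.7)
The strategy is to reduce Theorem~\ref{thm:main2} to a sharp inequality for closed curves on $\RP^2$ and $\S^2$ which meet every closed geodesic, and then establish that inequality via curve shortening flow with surgery.

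First I would invoke the standard separation fact that $\Gamma\subset\S^2$ contains the origin in the interior of its convex hull if and only if $\Gamma$ meets every great circle of $\S^2$. Passing to the antipodal quotient $\pi\cn\S^2\to\RP^2$, the projected curve $\bar\gamma:=\pi\circ\gamma$ is a closed $\C^2$ curve in $\RP^2$ meeting every projective line; singularities and inflections are preserved bijectively by $\pi$, while the $\mathcal{D}$ pairs of $\S^1$ on which $\gamma$ takes coincident or antipodal values are precisely the pairs producing self-intersections of $\bar\gamma$. Thus \eqref{eq:2} reduces to showing $2(\mathcal{D}+\mathcal{S})+\mathcal{I}\geq 6$ for any closed $\C^2$ curve in $\RP^2$ hitting every line. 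Inequality \eqref{eq:2.5} instead stays in $\S^2$ and counts only concordant pairs $\mathcal{D}^+$, at the cost of a weaker bound $4$, while the symmetric case \eqref{eq:2.75} combines this with the extra structure provided by $\Gamma=-\Gamma$ (which roughly forces the relevant curve in $\RP^2$ to be doubly traversed) to upgrade the bound back to $6$.

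The technical core is then to apply curve shortening flow with surgery to $\bar\gamma$, respectively $\gamma$, relying on four key flow properties: (i) the number of inflections does not increase, by Sturmian monotonicity of the zeros of curvature; (ii) the number of self-intersections does not increase, by Angenent's intersection principle; (iii) singularities of the immersion can be resolved by controlled surgeries which do not increase the tally $2(\mathcal{D}+\mathcal{S})+\mathcal{I}$; and (iv) the property of meeting every closed geodesic is preserved along the flow. Because of (iv), the curve cannot collapse to a point, so it must converge to a configuration supported on a (possibly multiple) closed geodesic, or else trigger further surgeries. A direct inspection of the admissible limits --- essentially a one-sided projective line covered once in $\RP^2$, or an appropriate great-circle cover in $\S^2$ --- yields the desired lower bounds $6$, $4$, and $6$, which then propagate back along the flow to the original $\gamma$.

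The main obstacle I anticipate is coordinating the surgery so that the monotonicity properties (i)--(iii) hold simultaneously while (iv) is preserved: when a cusp or tangential self-intersection develops, the resolution must neither destroy the geodesic-intersection condition nor introduce spurious inflections or self-intersections, and the precise bookkeeping of how each surgery changes $\mathcal{D}$, $\mathcal{S}$, and $\mathcal{I}$ requires care. A subsidiary difficulty is the ``genuine inflection'' refinement at the end of the theorem, which I would handle by approximating $\gamma$ in $\C^2$ by curves having only finitely many generic inflections, applying the main inequalities to the perturbations, and passing to the limit using semicontinuity of $\mathcal{D}$, $\mathcal{S}$, and the genuine inflection count.
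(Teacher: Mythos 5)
Your toolkit---curve shortening flow, Sturmian non-increase of inflections, Angenent's intersection principle, and surgeries controlled so as not to increase $2(\mathcal{D}+\mathcal{S})+\mathcal{I}$---is exactly the paper's, and your reformulation in $\RP^2$ is legitimate (it is essentially Theorem \ref{cor:main3}, which the paper derives as a corollary rather than using as an intermediate step). But your property (iv) is false, and the argument built on it collapses. The condition of meeting every closed geodesic (equivalently, containing the origin in the convex hull) is \emph{not} preserved by curve shortening flow on $\S^2$: by the Gauss--Bonnet identity $A'(t)=A(t)-2\pi$, any curve that does not bisect the sphere has strictly decreasing enclosed area and, by Grayson's theorem, shrinks to a point, eventually lying in an arbitrarily small ball and missing almost every great circle. (Intersections with a fixed geodesic are non-increasing under the flow, so they can be lost but never regained.) Consequently your conclusion that ``the curve cannot collapse to a point'' and must limit onto a multiply covered geodesic is wrong for the generic curve satisfying the hypothesis, and the ``inspection of admissible limits'' has nothing to inspect.

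The paper's argument runs in the opposite direction and supplies the ingredient your proposal is missing. After reducing to the simple regular case by surgery, it \emph{perturbs} the curve (Lemma \ref{lem:perturb}) precisely so that it does not bisect $\S^2$ and therefore \emph{does} collapse to a point; it then stops the flow at the first time $t_0$ at which $\Gamma_{t_0}$ fits inside a closed hemisphere. At that instant the origin still lies in $\conv(\Gamma_{t_0})$ (on its boundary), and a separate hemispherical inflection-counting theorem (Theorem \ref{thm:osserman}, via Carath\'eodory's theorem applied to $\Gamma_{t_0}\cap C$ on the bounding great circle, generalizing Jackson--Osserman) produces the lower bounds $4$ and $6$. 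Without this base case, or some substitute for it, the flow by itself gives you no inflections to count. Two smaller points: the boundary case where the origin lies on $\partial\conv(\Gamma)$ from the start requires the separate treatment given in Section \ref{sec:proof} (antipodal pairs must be simple regular points by Lemma \ref{lem:antipodal}, else one wins outright); and the symmetric bound $6$ in \eqref{eq:2.75} is obtained not from a double-traversal heuristic but from a parity-plus-sign argument (Lemma \ref{lem:segremobius}): four genuine inflections would force a pair of antipodal arcs with the same curvature sign, contradicting $k(-p)=-k(p)$.
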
 

Inequality \eqref{eq:2} does not seem  to have appeared before, not even in the special case where $\mathcal{D}=\mathcal{S}=0$. Inequalities \eqref{eq:2.5} and \eqref{eq:2.75}, on the other hand, extend some well known results. Indeed,
when $\mathcal{D}^+=\mathcal{S}=0$, \eqref{eq:2.5} is a theorem of Segre \cite{segre:tangents,weiner:inflection}, a special case of which was rediscovered by Arnold and famously dubbed ``the tennis ball theorem" \cite{arnold:plane, arnold:topological,angenent:inflection}. Further, when $\mathcal{S}=0$, \eqref{eq:2.5} follows from works of Fenchel \cite{fenchel2} according to Weiner  \cite[Thm. 1]{weiner:inflection}. Furthermore, when $\mathcal{D}^+=\mathcal{S}=0$, \eqref{eq:2.75} is  equivalent to a classical theorem of M\"{o}bius \cite{mobius,angenent:inflection,thorbergsson&umehara}, who showed that simple closed noncontractible curves in the real projective plane $\RP^2:=\S^2/\{\pm1\}$ have at least  $3$ inflections. More generally, Theorem \ref{thm:main2} has the following quick implication for curves in $\RP^2$:

\begin{thm}\label{cor:main3}
Let $\gamma\colon\S^1\to\RP^2$ be a  $\C^{2}$ closed curve, $\mathcal{S}$ be the number of singular points of $\gamma$, $\mathcal{I}$ be the number of its inflections, and $\mathcal{D}$ be the number of  pairs of points $t\neq s\in\S^1$ where $\gamma(t)=\gamma(s)$. If $\gamma$ is noncontractible, then
\begin{equation}\label{eq:3}
2(\mathcal{D}+\mathcal{S})+\mathcal{I}\geq 3.
\end{equation}
On the other hand, if $\gamma$ is contractible, and its image intersects every closed geodesic,  then
\begin{equation}\label{eq:3.5}
2(\mathcal{D}+\mathcal{S})+\mathcal{I}\geq 6.
\end{equation}
\end{thm}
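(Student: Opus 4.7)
The plan is to derive both parts of the corollary from the main result, Theorem \ref{thm:main2}, by lifting $\gamma$ through the antipodal double cover $q\colon \S^2\to\RP^2$; the two cases split naturally according to whether $\gamma$ is contractible (in which case the lift closes up) or not (in which case we first pull back along a double cover of the domain).

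In the \emph{contractible} case, $\gamma$ admits a $\C^2$ closed lift $\tilde\gamma\colon \S^1\to\S^2$ with $q\circ\tilde\gamma=\gamma$. Since $q$ is a local isometry, singularities, inflections, and unordered self-contact pairs of $\tilde\gamma$ correspond bijectively to those of $\gamma$, giving $\mathcal{S}(\tilde\gamma)=\mathcal{S}(\gamma)$, $\mathcal{I}(\tilde\gamma)=\mathcal{I}(\gamma)$, and $\mathcal{D}(\tilde\gamma)=\mathcal{D}(\gamma)$. To invoke inequality \eqref{eq:2}, I will verify that $\Gamma:=\tilde\gamma(\S^1)$ contains the origin in its convex hull. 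If it did not, a separating hyperplane $H$ through the origin would place $\Gamma$ in one open hemisphere and $-\Gamma$ in the antipodal one, so the great circle $H\cap\S^2$ would project to a closed geodesic of $\RP^2$ disjoint from $\gamma(\S^1)$, contrary to the standing hypothesis. Inequality \eqref{eq:3.5} then drops out of \eqref{eq:2}.

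In the \emph{noncontractible} case, the lift fails to close, so I first compose with the double cover $p\colon\S^1\to\S^1$, $t\mapsto 2t$, and then lift $\gamma\circ p$ to a closed $\C^2$ curve $\tilde\gamma\colon\S^1\to\S^2$. This lift satisfies the antipodal twist $\tilde\gamma(t+\pi)=-\tilde\gamma(t)$, so $\Gamma:=\tilde\gamma(\S^1)$ is symmetric and the origin automatically lies in $\conv(\Gamma)$ as the midpoint of any antipodal pair. Covering multiplicities yield $\mathcal{S}(\tilde\gamma)=2\mathcal{S}(\gamma)$ and $\mathcal{I}(\tilde\gamma)=2\mathcal{I}(\gamma)$, while a direct case analysis using the antipodal twist shows that each self-intersection pair of $\gamma$ lifts to four distinct unordered pairs of points in $\S^1$, exactly two of which are concordant under $\tilde\gamma$; hence $\mathcal{D}^+(\tilde\gamma)=2\mathcal{D}(\gamma)$. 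Substituting into the symmetric inequality \eqref{eq:2.75} and dividing by two yields \eqref{eq:3}.

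The main obstacle is the bookkeeping in the noncontractible case: one must carefully enumerate the four unordered pairs in $\S^1$ above a given double point of $\gamma$, and check via the twist $\tilde\gamma(t+\pi)=-\tilde\gamma(t)$ that precisely two are concordant, independent of whether the two local lifts at the double point agree or differ in sign. Aside from this combinatorial check, the argument is essentially a translation between the geometries of $\S^2$ and $\RP^2$, with no new analytic input beyond Theorem \ref{thm:main2}.
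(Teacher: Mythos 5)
Your proposal is correct and follows essentially the same route as the paper: lift through the antipodal covering $\S^2\to\RP^2$, apply inequality \eqref{eq:2.75} to the symmetric double-cover lift in the noncontractible case (with all counts doubling, including $\mathcal{D}^+$), and apply inequality \eqref{eq:2} to the closed lift in the contractible case after checking that the hypothesis on geodesics forces the origin into the convex hull. Your bookkeeping of the concordant lifted pairs is just a more explicit version of what the paper leaves implicit.
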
 
\begin{proof}
If $\gamma$ is noncontractible, then there is  a closed symmetric curve $\ol\gamma\colon\S^1\to\S^2$ which double covers $\gamma(\S^1)$ when composed with the standard projection $\pi\colon\S^2\to\RP^2$. So \eqref{eq:2.75} immediately implies \eqref{eq:3}. On the other hand,  if $\gamma$ is contractible, then there is a closed curve $\ol\gamma\colon\S^1\to\S^2$ such that $\pi\circ\ol\gamma=\gamma$.  Thus $\ol\gamma$ has the same number of inflections and singularities as $\gamma$ has, since $\pi$ is a local isometry. Further, double points of $\gamma$ correspond to the self intersections of $\ol\gamma(\S^1)$, and to the intersections of $\ol\gamma(\S^1)$ with $-\ol\gamma(\S^1)$. So \eqref{eq:2} implies \eqref{eq:3.5} once we note that if $\gamma(\S^1)$ intersects every closed geodesic in $\RP^2$, then $\ol\gamma(\S^1)$ does the same in $\S^2$, and thus contains the origin in its convex hull.
\end{proof}

The rest of this work will be devoted to 
proving Theorem \ref{thm:main2}, which will be achieved through a series of reductions. First, in Section \ref{sec:hemisphere}, we use some basic convexity theory and a lemma for spherical curves in the spirit of Fenchel \cite{fenchel2} to prove Theorem \ref{thm:main2} in the special case where $\mathcal{D}^+=\mathcal{S}=0$ and $\Gamma$ lies in a hemisphere. This generalizes a result of Jackson \cite{jackson:bulletin}, also studied by Osserman \cite{osserman:vertex},   which is an extension of the classical $4$-vertex-theorems of Mukhopadhyaya \cite{mukhopadhyaya} and Kneser \cite{kneser} for planar curves (Note \ref{note:osserman}). Next, in Section \ref{sec:flow}, we employ the curve shortening flow after Angenent \cite{angenent:inflection} and Grayson \cite{grayson} to show that in the case where $\mathcal{D}^+=\mathcal{S}=0$, we may continuously transform $\Gamma$  until it just lies in a hemisphere. This transformation does not increase any of the quantities $\mathcal{D}$, $\mathcal{D}^+$, $\mathcal{I}$ and $\mathcal{S}$, and thus leads to a proof of Theorem \ref{thm:main2} in the case  $\mathcal{D}^+=\mathcal{S}=0$, via the hemispherical case already considered in Section \ref{sec:hemisphere}. In particular, we obtain new proofs of M\"{o}bius's and Segre's theorems at this stage. Then it suffices to show that one may remove the double points and singularities of $\Gamma$ without increasing the sums $2(\mathcal{D}+\mathcal{S})+\mathcal{I}$ or $2(\mathcal{D}^++\mathcal{S})+\mathcal{I}$, and while keeping the origin in the convex hull of $\Gamma$. The desingularization and surgery procedures which we need here will be developed in Sections \ref{sec:singularity} and \ref{sec:double}. Then in Section \ref{sec:proof} we will be able to synthesize  these results to complete the proof. Finally we construct a number of examples in Section \ref{sec:examples}  which will demonstrate the optimality  of all the above inequalities.

The study of inflections and vertices, which  dates back to Klein's estimate for algebraic curves \cite{klein} and M\"{o}bius's $3$-inflections-theorem, has spawned a vast literature of interesting though often isolated results. One bright focal point, however, has been Segre's theorem, which implies 
the works of  Mukhopadhyaya and Kneser (Note \ref{note:stereo}), as well as that of M\"{o}bius (Lemma \ref{lem:segremobius}). It also leads to more recent results such as Sedykh's four-vertex-theorem for space curves  (Note \ref{note:sedykh}), and has connections to contact geometry, as noted by Arnold \cite{arnold:plane,arnold:topological}.  For other recent developments  and references see  \cite{ghomi:verticesA,ghomi:verticesB,gluck:notices,ovsienko&tabachnikov,thorbergsson&umeharaII,panina} and note that vertices are also known as \emph{flattening points} \cite{arnold:flattening, uribe-vargas:flattening}. Some open problems will be discussed in Notes \ref{note:sedykh}, \ref{note:fr}, and \ref{note:normals}.

\begin{note}\label{note:o}
Inequalities \eqref{eq:2} and \eqref{eq:2.5} for spherical curves are somewhat more general than their counterparts, inequalities \eqref{eq:1} and \eqref{eq:1.5}, for space curves. Indeed, as we mentioned earlier, the tantrix of a space curve will always contain the origin in the relative \emph{interior} of its convex hull \cite{ghomi:knots}, whereas in Theorem \ref{thm:main2} the origin may lie on the boundary of the convex hull. These critical cases require special care  throughout this work.
\end{note}

\begin{note}\label{note:sedykh}
The term ``vertex"  is used in the literature to refer not only to zero torsion points of space curves, but also to local extrema of the geodesic curvature of a curve in a Riemannian surface. This terminology is motivated by the fact that for spherical curves critical points of geodesic curvature correspond to points of vanishing torsion. Indeed if $p$ is a critical point of the geodesic curvature of $\Gamma\subset\S^2$, then $\Gamma$ has contact of order $3$ with its osculating circle $C$ in $\S^2$. But $C$ coincides with the osculating circle of $\Gamma$ as a curve in $\R^3$. Thus $\Gamma$ has contact of order $3$ with its osculating plane at $p$, and so has zero torsion at that point. The double use of the term vertex may be further justified by  Sedykh's theorem \cite{sedykh:vertex} which states that if a simple closed space curve lies on the boundary of its convex hull, then it must have (at least) four vertices (zero torsion points); see also \cite{sedykh:discrete, rs:torsion}. This is analogous to Mukhopadhyaya's classical four-vertex theorem for convex planar curves. See  the paper of Thorbergsson and  Umehara \cite{thorbergsson&umehara} for a short proof of Sedykh's result based on Segre's theorem. We should also mention a related open problem due to Rosenberg \cite{rosenberg:constant}: \emph{must every closed space curve bounding a surface of positive curvature have four vertices?} This question is related to a problem of Yau \cite[Problem 26]{yau:problems2} on characterizing boundaries of positively curved surfaces, which in turn is motivated by rigidity questions for closed surfaces in $\R^3$. See \cite{agw} for more references in this area.
\end{note}

\section{Preliminaries}\label{sec:prelim}
We begin by developing our basic terminology, and recording some fundamental observations which will be useful  throughout the paper. 

\subsection{Basic terminology}
Here $\R^n$ denotes the $n$-dimensional Euclidean space with origin $o$, standard inner product $\langle \cdot,\cdot\rangle$, and  norm $\|\cdot\|:=\langle\cdot,\cdot\rangle^{1/2}$. The  (closed) unit ball and sphere in $\R^n$ will be denoted by $B^n$ and $\S^{n-1}:=\partial B^n$ respectively. We often set $B:=B^2$, or more generally let $B$ denote a metric ball (or disk) in a Riemannian surface.
Let $I$ denote a connected one-dimensional manifold, which for  convenience we  identify  either with an interval in  $\R$, or  $\S^1\simeq\R/2\pi$. A   \emph{curve} is a  locally injective mapping $\gamma\colon I\to M$, where $M$ is some manifold. If $\gamma$ is injective, then we say that it is $\emph{simple}$.  A pair of  curves $\gamma_1$, $\gamma_2$ are considered \emph{topologically equivalent} if there is a homeomorphism $\theta\colon I\to I$ such that $\gamma_2=\gamma_1\circ\theta$. A point $t_0\in I$ is called a \emph{singular point} of $\gamma$ provided that the  differential map $\gamma':=d\gamma$  is either not well-defined or is singular at $t_0$. Otherwise $t_0$ will be called a \emph{regular point} of $\gamma$. Accordingly we say that $p\in M$ is a \emph{regular value} of $\gamma$ provided that every point of $\gamma^{-1}(p)$ is regular; otherwise,  $p$ is a \emph{singular value}.

It will often be convenient to  represent a class of topologically equivalent curves $[\gamma]$ by its image. More precisely, we say that $\Gamma\subset M$ is a  curve provided that there is a fixed  equivalence class of  curves  $\gamma\colon I\to M$ associated to $\Gamma$ such that $\gamma(I)=\Gamma$. The elements of $[\gamma]$ will then be called \emph{the (admissible) parametrizations} of $\Gamma$. We say that $\Gamma$ is $\C^k$ if it admits a $\C^k$ parametrization; furthermore, if this parametrization is regular, then we say that $\Gamma$ is $\C^k$-\emph{regular} (or $\C^k$-immersed). We say
$\Gamma$ is \emph{simple} if it has a simple parametrization. If a point $p$ of a $\C^k$ curve $\Gamma$ is a singular value of all $\C^k$ parametrizations of it, then $p$ is called a ($\C^k$) \emph{singular} \emph{point} of $\Gamma$; otherwise, $p$ is  a \emph{regular point} of $\Gamma$.
If $\gamma^{-1}(p)$ is a singleton, we say that $p$ is a \emph{simple point} of $\Gamma$; otherwise $p$ will be called a \emph{multiple point}, and if $\gamma^{-1}(p)$ consists of precisely two points, then $p$ will be a \emph{double point}. Finally, we say that $\Gamma$ is an \emph{arc} when $I\subset\R$ is a compact interval, and $\Gamma$ is  \emph{closed} when  $I=\S^1$.

\subsection{The maximum principle}
 If $\Gamma$ is a $\C^2$-regular  curve in a Riemannian surface $M$, and $N$ is a unit normal vector field along $\Gamma$, then the  \emph{geodesic curvature} of $\Gamma$ at a point $p\in\Gamma$ with respect to $N$ is defined as 
\begin{equation*}
k(p):=\big\langle \nabla_{T(p)} T, N(p)\big\rangle,
\end{equation*}
where $T$ is a (continuous)  unit tangent vector field along $\Gamma$, $\nabla$ is the covariant derivative, and $\langle\cdot,\cdot\rangle$ denotes the metric on $M$.  We say that $p$ is an \emph{inflection} of $\Gamma$ if $k(p)=0$, and $\Gamma$ is a \emph{geodesic} if $k$ vanishes identically.  

By \emph{near} a point, we mean in a sufficiently small open neighborhood of that point.
Let $\Gamma\subset M$ be a simple $\C^1$-regular curve, $p$ be an interior point of $\Gamma$, and $B\subset M$ be a metric ball centered at $p$. If $\Gamma$ is an arc or a closed curve, and $B$ is sufficiently small, then $\Gamma$ meets $\partial B$ only at its end points. Thus $B-\Gamma$ will consist of precisely two components, by Jordan's curve theorem. The closure of each of these components will be called  a \emph{side} of $\Gamma$ near $p$.
The following basic fact, which is  a version of the maximum principle,  will be invoked a number times in the following pages.

\begin{lem}\label{lem:mp}
Let $M$ be a Riemannian surface with nonnegative curvature, and $\Gamma\subset M$ be a $\C^1$-regular simple arc which is $\C^2$-regular in its interior. Further let  $k$ be the geodesic curvature of $\Gamma$ in its interior with respect to a unit normal vector field $N$ along $\Gamma$, and $C$ be a geodesic arc which is tangent to $\Gamma$ at an end point $p$. Suppose that  $\Gamma$ lies on one side of $C$ near $p$, and $N(p)$ points to the side of $C$ which contains (resp. does not contain) $\Gamma$ near $p$. Then either $\Gamma$ coincides with $C$ near $p$, or else $k>0$ (resp. $<0$) at some point in every neighborhood of $p$.
\end{lem}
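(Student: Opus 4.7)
My plan is to work in local Fermi coordinates along $C$, express $\Gamma$ as a graph $y=y(x)$ over $C$, read off the geodesic curvature $k$ as a second-order expression in $y(x)$, and show that $k\le 0$ in a neighborhood of $p$, together with $y(0)=y'(0)=0$ and $y\ge 0$, is incompatible with $\Gamma\ne C$ locally.

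First I would choose Fermi coordinates $(x,y)$ along $C$ in a neighborhood $W$ of $p=(0,0)$, so that $C\cap W=\{y=0\}$ and the metric takes the form $ds^{2}=f(x,y)^{2}\,dx^{2}+dy^{2}$ with $f(x,0)\equiv 1$ and $f_{y}(x,0)\equiv 0$; the Gaussian curvature then reads $K=-f_{yy}/f$. Since $\Gamma$ is $\mathcal{C}^{1}$-regular at $p$ with tangent line parallel to $C$, shrinking $W$ if necessary, $\Gamma\cap W$ is the graph of a function $y=y(x)$ with $y(0)=y'(0)=0$. I may assume the side of $C$ containing $\Gamma$ near $p$ is $\{y\ge 0\}$ and, in the first case, that $N(p)=\partial_{y}(p)$; the other case of the lemma follows by replacing $N$ with $-N$, which merely flips the sign of $k$.

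Using the Christoffel symbols of the Fermi metric, a direct computation yields
\[
k\;=\;\frac{1}{v^{3}}\!\left[\,f\,y''-f_{x}\,y'-2f_{y}\,y'^{\,2}-f^{2}f_{y}\right],\qquad v=\sqrt{f^{2}+y'^{\,2}},
\]
with $f,f_{x},f_{y}$ evaluated at $(x,y(x))$. The Fermi identities $f(x,0)\equiv 1$ and $f_{y}(x,0)\equiv 0$ yield the Taylor expansions $f=1+O(y^{2})$, $f_{x}=O(y^{2})$, and $f_{y}=-K(x,0)\,y+O(y^{2})$. Substituting into the assumption $k\le 0$ and invoking $K\ge 0$ with $y\ge 0$, the dominant Fermi term $-K(x,0)\,y$ on the right-hand side is nonpositive, so it may be dropped to leave the differential inequality
\[
y''\;\le\;C\bigl(y^{2}+y^{2}|y'|+y\,y'^{\,2}\bigr)
\]
on an interval $(-\delta_{0},\delta_{0})$ where $\Gamma$ is parametrized by the graph.

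The endgame is then a standard bootstrap. For $\delta\in(0,\delta_{0})$ set $\eta(\delta):=\sup_{[-\delta,\delta]}(|y|+|y'|)$; since $y$ and $y'$ vanish at $0$, one has $\eta(\delta)\to 0$ as $\delta\to 0$. Inserting $|y|,|y'|\le \eta(\delta)$ into the preceding inequality yields $y''\le C'\eta(\delta)^{2}$ on $[-\delta,\delta]$, and integrating twice from $0$ with $y(0)=y'(0)=0$ gives $|y(x)|+|y'(x)|\le C''\eta(\delta)^{2}\delta$ throughout this interval; taking suprema produces $\eta(\delta)\le C''\eta(\delta)^{2}\delta$. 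If $\Gamma$ does not coincide with $C$ in every neighborhood of $p$, then $\eta(\delta)>0$ for all small $\delta$, forcing $\eta(\delta)\ge 1/(C''\delta)\to\infty$ as $\delta\to 0$, a contradiction. Hence $y\equiv 0$ near $0$ and $\Gamma$ coincides with $C$ near $p$. The main obstacle, as is typical in such maximum-principle estimates, is sign bookkeeping: one must verify that the Taylor expansion of $f_{y}$ produces the term $-K(x,0)\,y$ with a minus sign, so that the hypothesis $K\ge 0$ suppresses (rather than amplifies) the leading term on the right and lets the remaining nonlinear contribution be absorbed in the quadratic bootstrap.
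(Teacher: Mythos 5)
Your strategy (Fermi coordinates, reading the hypothesis $K\ge 0$ off the sign of the $f_y$ term, and reducing the lemma to a second--order differential inequality for the graph $y(x)$) is genuinely different from the paper's proof, which instead applies the Gauss--Bonnet theorem to the region cut off between a subarc of $\Gamma$ and the geodesic $C$ and concludes $\int k\le 0$ directly. Your setup and the curvature computation are correct: the formula for $k$, the expansions $f=1+O(y^2)$, $f_x=O(y^2)$, $f_y=-K(x,0)y+O(y^2)$, the observation that $K\ge 0$ and $y\ge 0$ let you discard the leading term, and the reduction of the ``resp.'' case to the first case by replacing $N$ with $-N$ are all fine (modulo the small point that $p$ is an \emph{endpoint} of $\Gamma$, so the graph lives on a one-sided interval $[0,\delta_0]$ rather than $(-\delta_0,\delta_0)$, which changes nothing).

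The gap is in the endgame. From the \emph{one-sided} inequality $y''\le C'\eta(\delta)^2$ you cannot conclude $|y(x)|+|y'(x)|\le C''\eta(\delta)^2\delta$ by ``integrating twice from $0$'': integration gives only the upper bounds $y'(x)\le C'\eta(\delta)^2 x$ and $y(x)\le \tfrac12 C'\eta(\delta)^2x^2$, and says nothing about how negative $y'$ may become, since $y''$ is unbounded below under your hypothesis. A lower bound on $y'$ can be extracted, but only by invoking $y\ge 0$ again on an interval extending \emph{beyond} $[0,\delta]$ (if $y'(x_0)=-a$ then $y$ would be driven below $0$ unless $a\lesssim\sqrt{A_0\,\sup y}$, where $A_0$ is a curvature bound on the full arc); this yields $|y'|=O(\eta(\delta)\delta)$, not $O(\eta(\delta)^2\delta)$, so the inequality $\eta(\delta)\le C''\eta(\delta)^2\delta$ on which your contradiction rests is not established. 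The cleanest repair avoids the bootstrap altogether: since $|y|,|y'|\to 0$ as $x\to 0$, your differential inequality gives $y''\le c\,y$ on $(0,\delta]$ with $c=c(\delta)$ as small as you like. Setting $h(x)=\cosh(\sqrt{c}\,x)$, one computes $(y'h-yh')'=y''h-yh''\le cyh-cyh=0$, and since $y'h-yh'\to 0$ as $x\to 0^+$, the quotient $y/h$ is nonincreasing with limit $0$ at $0^+$; together with $y\ge 0$ this forces $y\equiv 0$, i.e.\ $\Gamma$ coincides with $C$ near $p$. With that replacement your argument is a complete and self-contained alternative to the paper's Gauss--Bonnet proof.
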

\begin{proof}
Let $\pi\colon\Gamma\to C$ be the nearest point projection. After replacing $\Gamma$ with a smaller subarc $pq$ we may assume that $\pi$ is well-defined (by the tubular neighborhood theorem) and one-to-one (by the inverse function theorem), since $\Gamma$ is tangent to $C$ at $p$. So we may assume that $\Gamma$ is a ``graph" over $C$. Further,  we may assume that $\Gamma$ lies in a  metric ball $B\subset M$ centered at $p$, meets $\partial B$ only at $q$, and $C$ is also a simple arc in $B$ which meets $\partial B$ only at its end points. There is then a component of $C-\{p\}$ which together with $\Gamma$ forms a simple $\C^1$-regular arc, say $\ol\Gamma$, which lies in $B$ and intersects $\partial B$ only at its end point; see Figure \ref{fig:disk}. By assumption $C$ lies on one side of $\ol\Gamma$ in $B$. Let $N$ be the unit normal vector field along $\Gamma$ which points into the side of $\ol\Gamma$ containing $C$. Then $N(p)$ points to the side of $C$ which does not contain $\Gamma$.

 \begin{figure}[h] 
   \centering
   \begin{overpic}[height=1in]{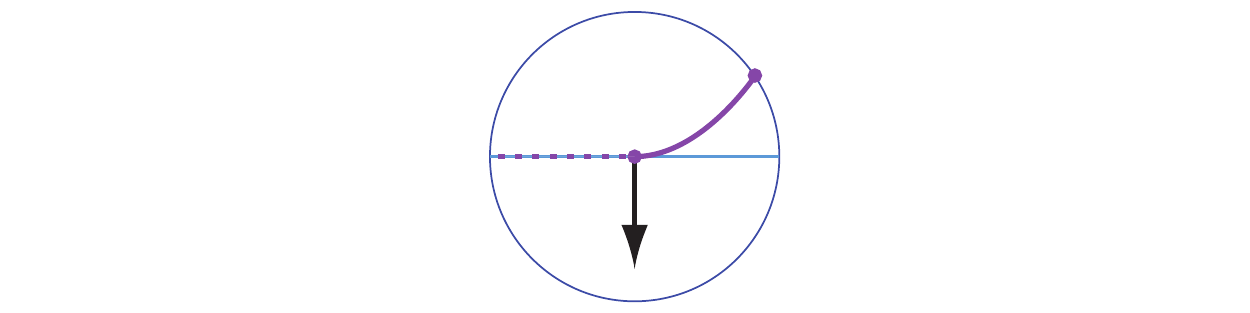} 
    \put(58,9){$C$}
   \put(55,16){$\Gamma$}
    \put(50,15){$p$}
      \put(62,19){$q$}
      \put(46,4){$N$}
   \end{overpic}
   \caption{}
   \label{fig:disk}
\end{figure}

Suppose that $\Gamma$ does not coincide with $C$ near $p$. Then  we may assume that $q\not\in C$. Let $C_\epsilon\subset B$ be the  curve at the distance $\epsilon$ away from $C$ which lies on the same side of $C$  as $\Gamma$; see Figure \ref{fig:slope}. Then $C_\epsilon$ intersects $pq$ for sufficiently small $\epsilon>0$. Let $q'$ be the first point of $\Gamma$, as we traverse it starting from $p$, which lies on $C_\epsilon$, and $p'$ (which may coincide with $p$) be the last point in $pq'$  which lies on $C$. 
Now joining the arc $p'q'$ to  the geodesic arc between $q'$ and $\pi(q')$, and the geodesic arc between $\pi(q')$ and $p'$, we obtain a  closed curve, which is simple since $\Gamma$ is a graph over $C$. Let $\Omega\subset B$ be the  region bounded by this curve. Then the exterior angles of $\partial \Omega$ at $p'$, $\pi(q')$, and $q'$, are respectively $\pi$, $\pi/2$, and $\pi/2+\delta$ for some $\delta\geq 0$.

 \begin{figure}[h] 
   \centering
   \begin{overpic}[height=1.5in]{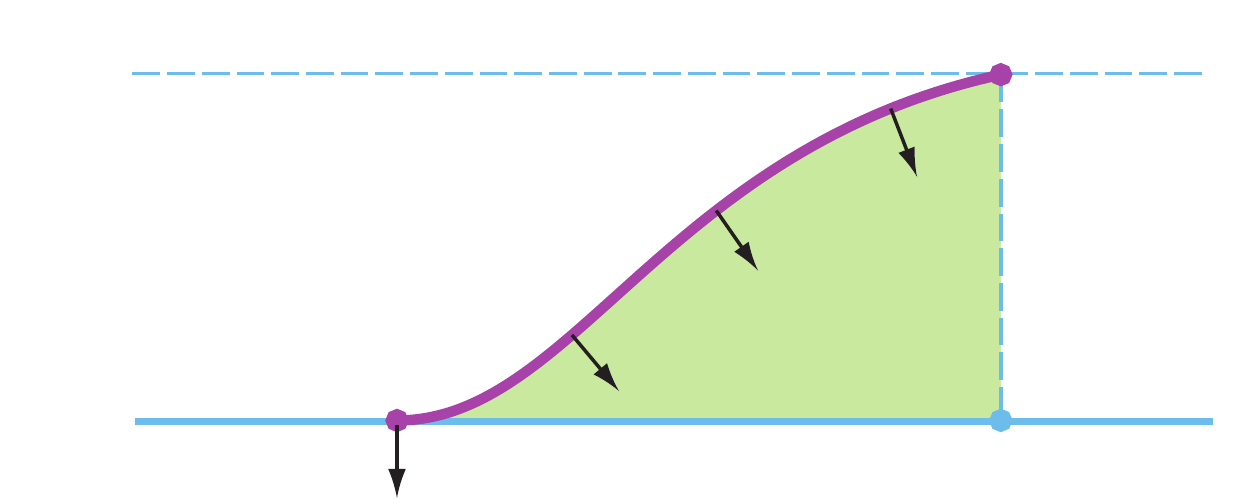} 
   \put(31,9){$p'$}
    \put(26,1){$N$}
    \put(6,6){$C$}
    \put(75,2){$\pi(q')$}
     \put(63,16){$\Omega$}
    \put(6,33){$C_\epsilon$}
   \put(45,18){$\Gamma$}
    \put(80,37){$q'$}
   \end{overpic}
   \caption{}
   \label{fig:slope}
\end{figure}

Our choice of $N$ ensures that $N$ points inside $\Omega$ along the interior of $p'q'$. Thus applying  the Gauss-Bonnet theorem to $\Omega$ yields:
\begin{equation}\label{eq:GB}
\int_{p'}^{q'}k+\int_{\Omega}K+2\pi+\delta=2\pi,
\end{equation}
where $K$ is the Gauss curvature of $M$. So, since by assumption $K\geq 0$, it follows that  $\int_{p'}^{q'} k\leq 0$. Thus either $k\equiv 0$ on the interior of $p'q'$ or else $k<0$ somewhere  between $p'$ and $q'$. Since $q'\not\in C$ and $p'q'$ is tangent to $C$ at $p'$, $k$ cannot vanish identically on $p'q'$ by the uniqueness of geodesics. So we conclude that $k<0$ somewhere on $p'q'$ which is the desired result, since $q'$ may be chosen arbitrarily close to $p$.

Finally note that if we replace $N$ with the opposite unit normal vector field along $\ol\Gamma$ (so that $N(p)$ points to the  side of $C$  containing $\Gamma$), then the geodesic curvature of the interior of $p'q'$ with respect to the inward normal of $\Omega$ will be $-k$.  Replacing $k$ with $-k$ in \eqref{eq:GB} yields that $k>0$ somewhere on $p'q'$, by repeating the argument in the last paragraph.
\end{proof}

\subsection{Convex spherical sets}
A set $X\subset\S^2$ is called \emph{convex} if every pair of points of $X$ may be joined by a  geodesic arc of length at most $\pi$ which lies in $X$. It is easy to see that $X\subset\S^2$ is convex if, and only if, the cone in $\R^3$ generated by $X$ is convex.
Thus many basic facts in the theory of convex Euclidean sets have direct analogues for convex spherical sets. In particular,  the closure of any convex spherical set is convex. We need the following local characterization for (closed) convex spherical curves, i.e., curves which bound a convex set.

\begin{lem}\label{lem:sphereconvex}
Let $\Gamma\subset\S^2$ be a simple closed curve bounding a region $\Omega$. Suppose that for every point $p\in \Gamma$, at least one of the following conditions hold:
\begin{enumerate}
\item{There exists a geodesic arc passing through $p$ which is disjoint from the interior of $\Omega$.}
\item{There exists an open neighborhood of $p$ in $\Gamma$ which is $\C^2$-regular, and has nonnegative geodesic curvature with respect to the normal vector field pointing into $\Omega$.}
\item{There is no geodesic arc in $\Omega$ which passes through $p$.} 
\end{enumerate}
Then $\Gamma$ is convex.
\end{lem}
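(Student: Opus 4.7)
I would argue by contradiction: assume $\ol\Omega$ is not convex, and then locate a point $p\in\Gamma$ that violates all three conditions (i), (ii), (iii).

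The first step is to verify that each of the three conditions implies local convexity of $\ol\Omega$ near $p$, in the sense that there is a metric ball $B\subset\S^2$ centered at $p$ with $\ol\Omega\cap B$ convex. Condition (i) is essentially immediate: the geodesic arc through $p$ disjoint from $\inte\Omega$ extends slightly to split a small neighborhood of $p$ into two closed geodesic half-disks, and $\ol\Omega\cap B$ must lie on one side. Condition (iii), combined with the fact that $\Gamma$ is a simple closed curve bounding the topological disk $\Omega$, forces the tangent cone of $\ol\Omega$ at $p$ in $T_p\S^2$ to be a convex cone of angular opening at most $\pi$, again giving local convexity. The subtler case is (ii): a $\C^2$ arc with $k\geq 0$ inward on a neighborhood of $p$ bounds a locally convex region on the $N$-side. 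This can be verified either by Taylor expansion in geodesic normal coordinates, or via Lemma~\ref{lem:mp} used in reverse --- if $\Gamma$ were instead on the $-N$ side of its tangent geodesic $C$ at $p$, then the lemma would supply a point near $p$ where $k<0$, contradicting (ii); so $\Gamma$ must lie on the $+N$ side of $C$, and local convexity follows.

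Having established local convexity at every point of $\Gamma$, the second step is a spherical analogue of Tietze's local-to-global convexity theorem: a closed topological disk in $\S^2$ whose boundary is locally convex at every point is globally convex. The cleanest route is to pass to the Euclidean cone $K\subset\R^3$ generated by $\ol\Omega$; local convexity of $\ol\Omega$ at each point of $\Gamma$ is equivalent to local convexity of $K$ at every point of $\partial K\setminus\{0\}$, while at interior points local convexity is automatic. A chain argument on segments in the connected set $K\setminus\{0\}$ then yields convexity of $K$, hence of $\ol\Omega$, contradicting the standing assumption.

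The main obstacle I anticipate is this globalization step. Direct appeal to Tietze's theorem in $\R^3$ is delicate because local convexity of $K$ at the apex $0$ is itself equivalent to the conclusion we want. One must therefore either perform a chain/continuity argument along $K\setminus\{0\}$ that bypasses the apex, or else prove the spherical Tietze-type statement directly by exploiting the disk topology of $\ol\Omega$ to propagate local convexity along $\Gamma$.
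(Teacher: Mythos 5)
Your first step is where the proposal breaks down: conditions (i) and (iii) do \emph{not} imply local convexity of $\ol\Omega$ at $p$, so the reduction to a Tietze-type theorem never gets off the ground. Condition (i) only provides a supporting geodesic at $p$, i.e., $\ol\Omega$ lies locally in a closed half-disk, and that is strictly weaker than local convexity. Concretely, if near $p$ the curve $\Gamma$ is (the Beltrami preimage of) the graph of $f(x)=x^2\sin^2(1/x)$ with $\Omega$ the region above it, then the geodesic $y=0$ through $p$ is disjoint from $\inte(\Omega)=\{y>f\}$, so (i) holds at $p$; yet no neighborhood of $p$ in $\ol\Omega$ is convex, since the chord joining two consecutive zeros $(x_n,0)$, $(x_{n+1},0)$ of $f$ exits $\ol\Omega$. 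The same issue afflicts (iii): a convex tangent cone of opening at most $\pi$ does not give local convexity of the set (perturb $|x|$ by $x^2\sin^2(1/x)$). Only condition (ii) delivers local convexity. Note that in these examples the lemma's hypothesis fails at \emph{other} points of $\Gamma$, so the lemma survives; but your argument is pointwise --- it deduces local convexity at $p$ from the condition at $p$ alone --- and that implication is false. The second gap is the one you flag yourself: Tietze--Nakajima requires a closed set locally convex at every boundary point, and for the cone $K$ the apex is precisely the point where local convexity is equivalent to the desired conclusion; a chain argument along $K\setminus\{0\}$ must still handle segments passing through or arbitrarily near $0$, which correspond to antipodal pairs in $\ol\Omega$ and geodesics of length $\pi$. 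Acknowledging this obstacle does not discharge it.

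The paper's proof avoids both problems by never converting the hypotheses into local convexity. It fixes $x_0,x_1\in\inte(\Omega)$, joins them by a path $x(t)$, and shows that the set of $t$ for which a geodesic arc $G_t$ of length at most $\pi$ from $x_0$ to $x(t)$ lies in $\inte(\Omega)$ is open and closed in $(0,1]$. The hypotheses enter only at a point $p$ where a limiting geodesic $G_t\subset\Omega$ touches $\Gamma$: there condition (iii) is automatically violated by $G_t$ itself, and then (i) together with uniqueness of geodesics, or (ii) together with the maximum principle (Lemma \ref{lem:mp}), forces $\Gamma$ to coincide with $G_t$ near $p$, so that $G_t\cap\Gamma$ is open and closed in $G_t$, a contradiction. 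The antipodal difficulty is handled explicitly by showing the limiting geodesic cannot acquire length exceeding $\pi$. In effect, the conditions are used to trap a specific geodesic that already lies in $\Omega$, not to certify convexity near each boundary point; if you try to repair your step one, you will find yourself reconstructing exactly this argument.
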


The above lemma has some analogues for planar curves, e.g., see \cite{eggleston}. Indeed, this lemma follows from its planar version in the case where $\Omega$ lies in an open  hemisphere, via the Beltrami mapping discussed in the next section; however, we cannot a priori assume that $\Omega$ is confined to a  hemisphere, and therefore an independent proof is needed.

\begin{proof}[Proof of Lemma \ref{lem:sphereconvex}]
It suffices to show that the interior of $\Omega$, which we denote by $\inte(\Omega)$, is convex, since as we remarked above the closure of a convex spherical set is convex. 
Let $x_0$, $x_1\in \inte(\Omega)$. Since $\inte(\Omega)$ is connected and open, it is path connected. So there is a continuous map $x\colon[0,1]\to\inte(\Omega)$ such that $x(0)=x_0$, and $x(1)=x_1$. For every $t\in[0,1]$, let $G_t$ be a  geodesic arc of length at most $\pi$ which connects $x(0)$ and $x(t)$. Now let $A$ be the set of all $t\in(0,1]$ such that $G_t\subset\inte(\Omega)$. Clearly $A$ is nonempty and open. We claim that $A$ is also closed, which will complete the proof. To see this let
 $t_i\in A$ be a sequence of points converging to $t\in (0,1]$. 
 We have to show that  then $G_t\subset\inte(\Omega)$.

 First we show that $G_t\subset\Omega$.
 To see this let $B\subset\inte(\Omega)$ be a ball centered at $x(t)$, which does not contain $x(0)$; see Figure \ref{fig:guitar}. Then for $i$ sufficiently large,  $G_i:=G_{t_i}$ intersects $\partial B$ at a point $y_i$. Since $\partial B$ is compact, after replacing $y_i$ with a subsequence, we may assume that $y_i$ converge  to  $y\in\partial B$. Note that $y$ cannot be antipodal to $x(0)$,  because the lengths of $G_i$ approach the distance between $y$ and $x(0)$ plus the radius of $B$. Thus if the distance between $x(0)$ and $y$ were $\pi$, then  the lengths of $G_i$ would eventually exceed $\pi$, 
 \begin{figure}[h] 
   \centering
   \begin{overpic}[height=1.25in]{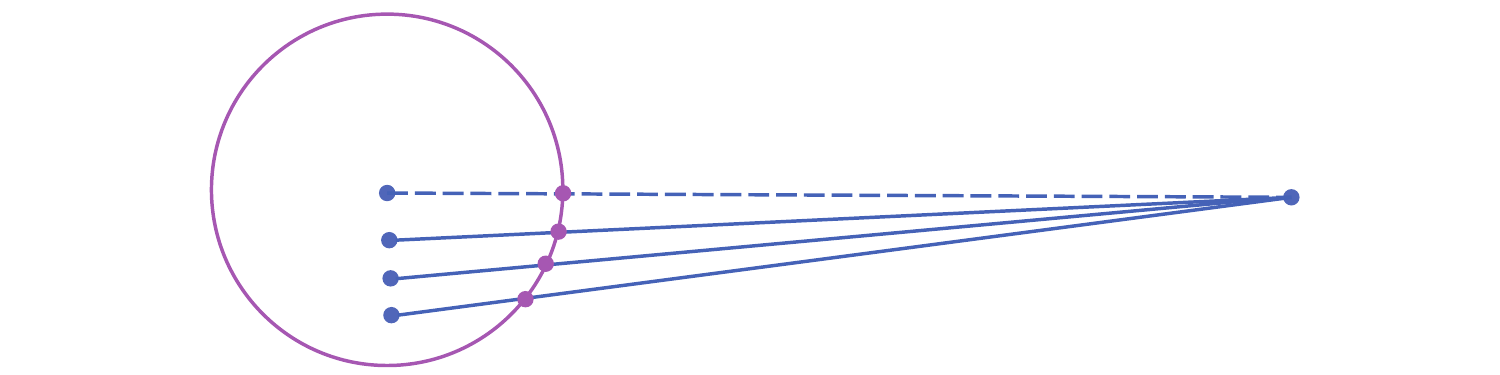} 
   \put(22,14){$x(t)$}
    \put(38,14){$y$}
     \put(87,11){$x(0)$}
      \put(19,4){$x(t_i)$}
        \put(35,3){$y_i$}
         \put(55,5){$G_i$}
           \put(55,13){$G$}
             \put(13,19){$B$}
     \end{overpic}
   \caption{}
   \label{fig:guitar}
\end{figure}
 which is not permitted. So we may assume that, for $i$ sufficiently large, there are  unique great circles $C_i$ passing through $x(0)$ and $y_i$. Consequently  $C_i$, and the arcs $G_i\subset C_i$, depend continuously on $y_i$. Let $C$ be the great circle passing through $x(0)$ and $y$. Then $C_i$ converge  to $C$ in the sense of Hausdorff distance, i.e., $C_i$ eventually lie inside any given open neighborhood of $C$. Consequently $C$ passes through $x(t)$, since $C_i$ contain $x(t_i)$ which converge to $x(t)$. Now let $G$ be the arc of $C$ which passes through $y$ and is bounded by $x(0)$ and $x(t)$. Then $G_i$ converge  to $G$. Thus, since $\Omega$ is compact,  $G\subset\Omega$. Furthermore, since  $G_i$ have length at most $\pi$, then so does $G$. Thus $G_t=G\subset\Omega$, as desired.

Now to show that  $G_t\subset\inte(\Omega)$, it suffices to check that $G_t$ is disjoint from $\partial\Omega=\Gamma$. Suppose   that $G_t$  touches $\Gamma$ at some point $p$. We claim then that $G_t\subset\Gamma$ which is the desired contradiction, since the end points of $G_t$ lie in $\inte(\Omega)$.
To establish this claim let $X:=G_t\cap\Gamma$. Then $X$ is a nonempty closed subset of $G_t$. It suffices then to show that $X$ is also open in $G_t$. To this end, let $p\in X$. Then condition (iii) in the statement of the lemma is immediately ruled out, and so either (i) or (ii) must hold. If (i) holds, then there are geodesic arcs on either side of $\Gamma$  which pass through $p$. Thus, by the uniqueness of geodesics, these arcs must coincide with each other, and consequently with $\Gamma$ near $p$, as desired. If (ii) holds, then it follows from Lemma \ref{lem:mp} (the maximum principle) that $\Gamma$ again coincides with $G_t$ near $p$, which completes the proof.
\end{proof}

Next we characterize convex spherical sets which contain a pair of antipodal points. A \emph{lune} is a subset  of $\S^2$ which is bounded by a pair of half great circles sharing the same end points.

\begin{lem}\label{lem:lune}
Let  $X$ be a proper closed convex subset of $\S^2$ which contains a pair of antipodal points. Then $X$ is a lune.
\end{lem}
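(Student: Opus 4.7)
My plan is to exhibit $X$ as a union of half great circles joining the given antipodal pair $p,-p$, parametrized by a closed subset of tangent directions at $p$, and then to read off that this union is a lune.

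First, for every $q\in X\setminus\{p,-p\}$ I would show that the entire half great circle from $p$ through $q$ to $-p$ lies in $X$. Since $q$ is antipodal to neither $p$ nor $-p$, the two short geodesic arcs $\overline{pq}$ and $\overline{q(-p)}$ are unique, have lengths strictly less than $\pi$, and lie in $X$ by convexity; their concatenation has length $\dist(p,q)+\dist(q,-p)=\pi$ and is therefore a half great circle from $p$ to $-p$ passing through $q$. Parametrizing half great circles from $p$ to $-p$ by their tangent direction at $p$, I would then let $T\subset\S^1$ be the set of directions $\theta$ for which the corresponding half great circle $\gamma_\theta$ lies in $X$, so that $X=\{p,-p\}\cup\bigcup_{\theta\in T}\gamma_\theta$. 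The set $T$ is closed because $X$ is, and nonempty because $\{p,-p\}$ by itself is not convex in the sense of the paper: any joining arc has length $\pi$ and so contains interior points outside $\{p,-p\}$.

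The crux is to verify that $T$ is \emph{locally convex} in $\S^1$: if $\theta_1,\theta_2\in T$ have angular distance less than $\pi$, then the short $\S^1$-arc between them lies in $T$. For this I would consider the equatorial circle $E$ at spherical distance $\pi/2$ from $p$; each $\gamma_\theta$ meets $E$ at a single point $e_\theta$, and $\theta\mapsto e_\theta$ is a bijection. The points $e_{\theta_1},e_{\theta_2}\in X$ have spherical distance equal to the angular distance of $\theta_1,\theta_2$ in $\S^1$, which is less than $\pi$, so convexity of $X$ forces the unique short equatorial arc between them into $X$; applying the first step to each of its points $e_\theta$ then gives $\gamma_\theta\subset X$, and hence $\theta\in T$.

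Since $X$ is proper, $T\neq\S^1$, and the combination of closedness, local convexity, and $T\neq\S^1$ should force $T$ to be either a single closed arc in $\S^1$ of length at most $\pi$ (possibly a point), or else a pair of antipodal points of $\S^1$. In the first case $X$ is bounded by the two half great circles $\gamma_{\theta_-},\gamma_{\theta_+}$ at the endpoints of $T$, degenerating to a closed hemisphere when $T$ has length $\pi$, and to a single half great circle when $T$ is a point; in the second case $X$ is the union of two antipodal half great circles, i.e., a full great circle. Each outcome matches the definition of a lune. The main obstacle I expect is precisely this structural classification of $T$: an arc of length strictly between $\pi$ and $2\pi$ would contain two points near its endpoints whose short $\S^1$-arc goes the other way (outside $T$), violating local convexity, and any other disconnection of $T$ is ruled out by the same short-arc consideration.
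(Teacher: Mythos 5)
Your proof is correct and follows essentially the same route as the paper: both exhibit $X$ as the union of the half great circles from $p$ to $-p$ through its points (using convexity to pull each minimizing arc $pq$ and $q(-p)$ into $X$), and then reduce to showing that the set of such meridians, viewed as a subset of the equatorial circle, is an arc. Your closed/locally-convex case analysis of $T\subset\S^1$ just spells out more explicitly (including the degenerate hemisphere and great-circle cases) what the paper dispatches in one line by asserting that $C\cap L$ is a closed connected proper subset of $C$.
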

\begin{proof}
Suppose $\pm p\in X$. Let $L\subset\S^2$ be the union of all half great circles $\ell_q$ which pass through $q\in X-\{\pm p\}$, and have their end points at $\pm p$. Then $X\subset L$. Further $L\subset X$, since for every  $\ell_q\in L$,  the subarcs $pq$ and $(-p)q$ of $\ell_q$ both lie in $X$ by the convexity assumption. So $L=X$, and it remains only to check that $L$ is a lune. To see this, first note that $L$ is closed since $X$ is closed. Next, let $C\subset \S^2$ be the great circle which lies on the plane which is orthogonal to the line passing through $\pm p$,  and define $\pi\colon\S^2-\{\pm p\}\to C$  by $\pi(q):=\ell_q\cap C$. Note that $\pi(L)=C\cap L$.  So $\pi(L)$ is an arc, or a closed connected proper subset of $C$, since $L$ is closed, convex, and a proper subset of $\S^2$. Consequently $L$ is a lune, since $L=\pi^{-1}(\pi(L))\cup\{\pm p\}$. Indeed $L$ is bounded by $\ell_r\cup\ell_s$ where $r$ and $s$ are the end points of $\pi(L)$.
\end{proof}

\subsection{The Beltrami map}\label{subsec:beltrami}
Let $M$ and  $\ol M$ be a pair of Riemannian surfaces,  $f\colon M\to \ol M$ be a diffeomorphism, $\Gamma\subset M$ be a $\C^2$-regular simple curve, $p\in\Gamma$, $N$ be a unit normal vector field along $\Gamma$, and $k$ be the corresponding geodesic curvature. Further let $\ol\Gamma:=f(\Gamma)$, $\ol N$ be the unit normal vector field along $\ol\Gamma$ which lies on the same side of $\ol\Gamma$ as $df(N)$, and $\ol k$ be the corresponding geodesic curvature of $\ol\Gamma$. We say that $f$ \emph{preserves the sign of geodesic curvature}, provided that $k(p)> 0$ (resp. $< 0$) if and only if $\ol k(\ol p)> 0$ (resp. $\ol k(\ol p)< 0$). By a \emph{hemisphere} in this work we always mean a closed hemisphere. A pair of curves are said to have \emph{contact of order $n$} at an intersection point provided that they admit regular parametrizations whose derivatives agree up to order $n$ at that point. 
The maximum principle (Lemma \ref{lem:mp}) quickly yields:

\begin{lem}\label{lem:beltrami}
Let $M$, $\ol M$ be Riemannian surfaces with nonnegative curvature, and $f\colon M\to\ol M$ be a diffeomorphism which preserves geodesics. Then $f$ preserves the sign of geodesic curvature. In particular, for any open hemisphere $H$ centered at a point $p\in\S^2$, the radial projection $\pi\colon H\to T_p\S^2\simeq\R^2$ from the center of $\S^2$ preserves the sign of  geodesic curvature.
\end{lem}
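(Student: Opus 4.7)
The plan is to combine the maximum principle (Lemma \ref{lem:mp}) with the classical observation that smooth diffeomorphisms preserve the order of contact between curves. Fix $p\in\Gamma$ with $k(p)>0$, let $C\subset M$ be the geodesic tangent to $\Gamma$ at $p$, and set $\ol p:=f(p)$, $\ol\Gamma:=f(\Gamma)$, $\ol C:=f(C)$; because $f$ preserves geodesics and is smooth, $\ol C$ is a geodesic tangent to $\ol\Gamma$ at $\ol p$. My goal will be to show $\ol k(\ol p)>0$; the case $k(p)<0$ is symmetric (replace $N$ by $-N$), and the converse implications follow by running the same argument with $f^{-1}$ in place of $f$.

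First I would pin down the local picture at $p$. A Taylor expansion in geodesic normal coordinates at $p$ with $C$ as a coordinate axis writes $\Gamma$ as $y=\tfrac12 k(p)\,x^2+o(x^2)$; hence in a sufficiently small neighborhood $U$ of $p$, the arc $\Gamma\cap U$ lies strictly on the $N$-side of $C$ and meets $C$ only at $p$. Applying the diffeomorphism $f$ then carries this configuration to $f(\Gamma\cap U)$ lying strictly on the $df(N)$-side of $\ol C$ near $\ol p$; by the definition of $\ol N$, this is the side toward which $\ol N(\ol p)$ points. Lemma \ref{lem:mp}, applied to $\ol\Gamma$ and $\ol C$ at $\ol p$, then yields $\ol k>0$ somewhere in every neighborhood of $\ol p$ (the alternative $\ol\Gamma=\ol C$ near $\ol p$ is excluded because $f$ is a bijection and $\Gamma\neq C$).

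To upgrade this to the pointwise statement $\ol k(\ol p)>0$, I argue by contradiction. If $\ol k(\ol p)<0$, the Taylor expansion at $\ol p$ would force $\ol\Gamma$ locally onto the $-\ol N$-side of $\ol C$, contradicting the previous paragraph. If $\ol k(\ol p)=0$, then $\ol\Gamma$ and $\ol C$ have contact of order at least $2$ at $\ol p$; since $f^{-1}$ is a smooth geodesic-preserving diffeomorphism, it preserves the order of contact of curves, so $\Gamma$ and $C$ would have contact of order at least $2$ at $p$, which gives $k(p)=0$ and contradicts our hypothesis. Hence $\ol k(\ol p)>0$.

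The ``In particular'' clause follows immediately: both $\S^2$ and $\R^2$ carry nonnegative Gauss curvature, and the radial projection from the center of $\S^2$ is a classical diffeomorphism of the open hemisphere $H$ onto the tangent plane $T_p\S^2$ that sends great circles to straight lines. The main obstacle I anticipate is the borderline case $\ol k(\ol p)=0$, which the maximum principle does not exclude on its own; handling it cleanly requires the auxiliary remark that smooth diffeomorphisms preserve the order of contact between curves, together with the easy fact that contact of order at least $2$ with a tangent geodesic forces the geodesic curvature to vanish.
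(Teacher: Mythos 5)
Your proof is correct and follows essentially the same route as the paper's: both arguments characterize the sign of $k(p)$ by the side of the tangent geodesic on which $\Gamma$ locally lies (via Lemma \ref{lem:mp} and the local Taylor expansion) and use preservation of order of contact under a diffeomorphism to handle the borderline case $k=0$. Your version merely spells out in more detail the steps the paper compresses into two sentences.
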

\begin{proof}
Note that $p\in\Gamma$ is an inflection, if, and only if, there passes a geodesic $C$ through  $p$ which has contact of order $2$ with $\Gamma$. Since $f$ is a diffeomorphism, it preserves the order of contact, and so it will preserve inflections if it preserves geodesics. Further, by Lemma \ref{lem:mp}, $k(p)>0$ (rep. $<0$) if and only if $k(p)\neq 0$, and, near $p$, $\Gamma$ lies  in the same (resp.  opposite) side of $C$ where $N(p)$ points, which are all properties  preserved by $f$.
\end{proof}

The above lemma essentially states that, as far as this work is concerned, the local geometry of spherical curves is  the same as that of planar curves, which will be a convenient observation utilized in the following pages.
We should also point out that the last sentence in Lemma \ref{lem:beltrami} is well known, e.g., see \cite{weiner:rend}, although we are not aware of an explicit or concise proof. The projection from the center of the sphere is also called \emph{gnomic projection} or \emph{Beltrami map}  \cite{docarmo&warner}.

\begin{note}\label{note:stereo}
In addition to the Beltrami map, the stereographic projection $\pi\colon\S^{2}-\{(0,0,1)\}\to\R^2$ is also useful for studying spherical curves, because it preserves their vertices. Indeed,  a point of a curve in $\R^2$ or $\S^2$ is a vertex (local extremum of geodesic curvature), if, and only if, it lies locally on one side of its osculating circle at that point. It remains to note then that $\pi$ preserves osculating circles, because it maps circles to circles and preserves the order of contact between curves.
This observation implies  that Segre's theorem, which is generalized by inequality \eqref{eq:2.5}, is itself a generalization of the classical theorem of Kneser which states that simple $\C^2$-regular closed curves $\Gamma\subset\R^2$ have (at least) $4$ vertices (earlier, Mukhopadhyaya had obtained this result for convex curves). Indeed, we may assume, after a dilation and translation, that $\Gamma$ intersects $\S^1$ in a pair of antipodal points $\pm p$.  Then the spherical curve $\ol\Gamma:=\pi^{-1}(\Gamma)$ also includes $\pm p$ and thus contains the origin in its convex hull. So, by Segre's theorem, $\ol\Gamma$ has $4$ inflections, and therefore $4$ vertices (there must be a local extremum of geodesic curvature $k$ in between every pairs of zeros of $k$). So $\Gamma$ must have $4$ vertices as well.
\end{note}

\section{Inflections of  Curves Inscribed in Great Circles}\label{sec:hemisphere}
Here we use  the observations of  the last section to prove 
Theorem \ref{thm:main2}  in the special case where $\Gamma$ is simple, regular ($\mathcal{D}^+=\mathcal{S}=0$),  and  lies in a hemisphere. To this end we need the following basic lemma.  By an \emph{orientation}  of a simple curve $\Gamma$, we mean an ordering of its points (which may be cyclical in case $\Gamma$  is closed). If $\Gamma$ is oriented, then for any pairs of points $p$, $q$ of $\Gamma$, the oriented arc in $\Gamma$ which begins at $p$ and ends at $q$ (if it exists) is denoted by $pq$. If $\Gamma$ is $\C^1$, then this ordering corresponds to a unique choice of a  unit tangent vector field along $\Gamma$, which will also be called an orientation of $\Gamma$.  We say that a pair of oriented $\C^1$-regular curves are \emph{compatibly tangent} at an intersection point $p$, if they are tangent at $p$ and their orientations coincide at that point.

\begin{lem}\label{lem:hemisphere}
Let $\Gamma\subset\S^2$ be an oriented $\C^1$-regular simple arc which is $\C^2$-regular in its interior. Suppose that $\Gamma$ lies on one side of an oriented great circle $C$, intersects $C$  only at its end points $p$ and $q$,  and is compatibly tangent to  $C$ at $p$.  Further suppose that $p$ is the initial point of $\Gamma$, and
 the  length of the arc $pq$ in $C$  is at most $\pi$. Then the geodesic curvature $k$ of the interior of 
$\Gamma$ changes sign at least once. Furthermore, if $\Gamma$ is also compatibly tangent to $C$ at $q$,  then $k$  changes sign at least twice. 
\end{lem}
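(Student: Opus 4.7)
The plan is to argue by contradiction, combining the maximum principle (Lemma \ref{lem:mp}) with the convexity characterization (Lemma \ref{lem:sphereconvex}) and the lune classification (Lemma \ref{lem:lune}). Let $H$ denote the closed hemisphere bounded by $C$ on the side containing $\Gamma$. The arc $\Gamma$ splits $H$ into two closed regions: $\Omega$, bounded by $\Gamma$ together with the arc of $C$ from $p$ to $q$ in $C$'s orientation, and $\Omega'$, bounded by $\Gamma$ and the complementary arc $L\subset C$, which has length $\geq \pi$ since the former has length $\leq \pi$ by hypothesis. Let $N$ denote the unit normal along $\Gamma$ pointing into $\Omega$, and let $k$ be the corresponding geodesic curvature.

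First I would apply Lemma \ref{lem:mp} at $p$. A direct inspection in the tangent plane at $p$ shows that $N(p)$, perpendicular to $T_\Gamma(p)=T_C(p)$ and aiming into the collapsing sliver $\Omega$, points to the side of $C$ \emph{opposite} the side carrying $\Gamma$; hence the maximum principle gives $k<0$ at some point in every neighborhood of $p$. Under the compatible-tangency hypothesis at $q$ of the second assertion, the same reasoning applied at $q$ produces $k<0$ in every neighborhood of $q$.

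The heart of the argument is the claim that $k>0$ at some interior point of $\Gamma$. I plan to prove this by contradiction: assume $k\leq 0$ throughout the interior of $\Gamma$, so that the geodesic curvature of $\Gamma$ with respect to the opposite normal (the one pointing into $\Omega'$) is nonnegative. I would then verify the hypotheses of Lemma \ref{lem:sphereconvex} on the simple closed curve $\partial\Omega'=\Gamma\cup L$: interior points of $\Gamma$ satisfy condition (ii) by the assumed sign of $k$; interior points of $L$ satisfy (ii) trivially, as $L$ is a geodesic arc; and at each of the corners $p,q$ a small subarc of $C$ witnesses condition (i), since $C=\partial H$ is disjoint from $\inte(\Omega')\subset\inte(H)$. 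Hence $\Omega'$ is a proper closed convex subset of $\S^2$. Because $|L|\geq\pi$, the arc $L$ contains a pair of antipodal points, and Lemma \ref{lem:lune} forces $\Omega'$ to be a lune. The boundary of a lune consists of two half great circles joined at two antipodal corners, and the only way for $\Gamma\cup L$ to realize such a boundary is that $|L|=\pi$, $q=-p$, and $\Gamma$ itself is a half great circle tangent to $C$ at $p$ with matching orientation; uniqueness of tangent geodesics then forces $\Gamma\subset C$, contradicting the hypothesis $\Gamma\cap C=\{p,q\}$.

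Once $k>0$ at some interior point of $\Gamma$ is established, the conclusions of the lemma are immediate by continuity. In the general case, $k<0$ near $p$ together with $k>0$ at an interior point gives at least one sign change. In the compatibly-tangent-at-$q$ case, $k<0$ in every neighborhood of each of $p$ and $q$ together with $k>0$ at an interior point forces at least two sign changes. The main obstacle I anticipate is the careful bookkeeping of orientations and the two opposite normal choices, so that the sign of the geodesic curvature produced by Lemma \ref{lem:mp} is precisely the sign needed to feed into Lemma \ref{lem:sphereconvex} on the opposite region; a secondary delicate point is the borderline configuration $|L|=\pi$, where one must confirm that the forced half-great-circle identification of $\Gamma$ really does contradict ``$\Gamma\cap C=\{p,q\}$'' rather than merely reproduce a degenerate special case.
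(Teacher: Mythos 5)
Your proposal is correct and follows essentially the same route as the paper: Lemma \ref{lem:mp} at the tangent endpoint(s), then a contradiction argument applying Lemma \ref{lem:sphereconvex} and Lemma \ref{lem:lune} to the region bounded by $\Gamma$ together with the long arc of $C$ (your $\Omega'$ is exactly the paper's $\ol H$). The only difference is the sign convention---you take the normal into the small region $\Omega$ and get $k<0$ at the endpoints, whereas the paper takes the normal into $\Omega'$ and gets $k>0$ there---and your handling of the lune endgame, including the delicate borderline case you flag, matches the paper's in substance.
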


\begin{figure}[h] 
   \centering
   \begin{overpic}[height=1in]{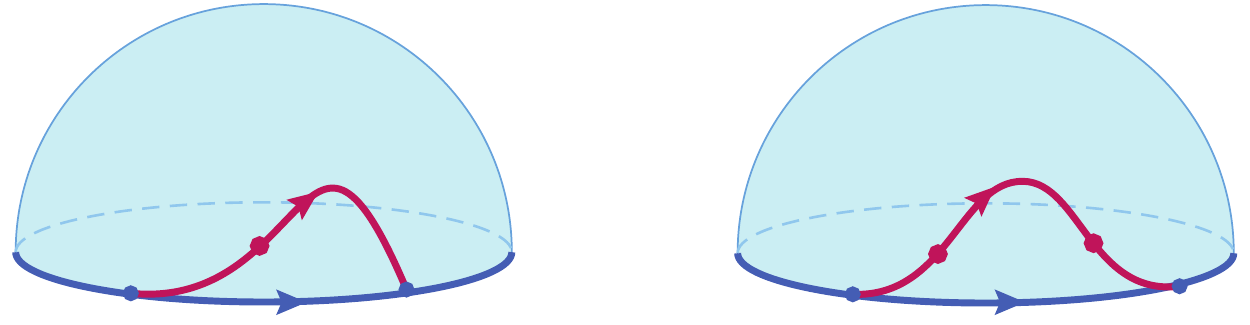} 
   \put(8,0){$p$}
        \put(32,0){$q$}
        \put(66,0){$p$}
        \put(94,0){$q$}
   \end{overpic}
   \caption{}
   \label{fig:hemispheres}
\end{figure}

\begin{proof}[Proof of Lemma \ref{lem:hemisphere}]
Let $H$ be the hemisphere determined by $C$ where $\Gamma$ lies, and $N$ be a unit normal vector field along $\Gamma$ such that $N(p)$ points inside $H$. Note that if $\Gamma$ is also compatibly tangent to $C$ at $q$, then $N(q)$ points inside $H$ as well; see Figure \ref{fig:hemispheres}. To see this, let $\ol C\subset H$ be the closed curve formed by joining $\Gamma$ to the arc $qp$ in $C$. Then $\ol C$ is  simple, so it bounds a region $\ol H\subset H$ by the Jordan curve theorem. Further, $\ol C$ is $\C^1$-regular, so we may choose a unit normal vector field $\ol N$ along $\ol C$ which always points into $\ol H$. Now note that $ N(p)=\ol N(p)$. Thus $N=\ol N$ all along $\Gamma$. In particular, $N(q)=\ol N(q)$, which shows that $N(q)$ points inside $\ol H\subset H$ as claimed. 

Next let $k$ be the geodesic curvature of the interior of $\Gamma$ with respect to $N$. Then $k> 0$ arbitrarily close to $p$ by Lemma \ref{lem:mp}. Furthermore, $k> 0$ arbitrarily close to $q$ as well, if $\Gamma$ is compatibly tangent to $C$ at $q$. So all we need is to show that $k<0$ at some point in the interior of $\Gamma$. Suppose not. Then $\ol C$ is a convex curve by Lemma \ref{lem:sphereconvex}. But $\ol C$ contains antipodal points, because the length of the arc $qp$ of $C$ is at least $\pi$ by assumption. So $\ol C$ must bound a lune by Lemma \ref{lem:lune}, which is impossible; because then   $\Gamma$ would have to be a geodesic, and therefore lie on $C$ since it is tangent to $C$ at $p$, but  $\Gamma$ may touch $C$ only at its end points by assumption.
\end{proof}

The last lemma quickly yields the following observation, which corresponds to a theorem of Osserman for planar curves (see Note \ref{note:osserman}). We say that a continuous function $f\colon\S^1\to\R$ \emph{changes sign} $n$ times, provided that there are cyclically arranged consecutive points $p_i\in\S^1$, $i\in \Z_n$,  such that $f(p_i)\neq 0$, and $\sign(f(p_i))=-\sign(f(p_{i+1}))$.

 \begin{thm}\label{thm:osserman}
Let $\Gamma\subset\S^2$ be a   $\C^2$-regular simple closed curve which is circumscribed in a great circle $C$, and has no geodesic subarcs. Suppose that $\Gamma\cap C$  includes $n$ points  not all contained in the interior of a semicircle of $C$. Then the geodesic curvature of $\Gamma$ changes sign at least $2n$ times. In particular, if $\Gamma$ has only finitely many inflections, then at least $2n$ of them must be genuine.
\end{thm}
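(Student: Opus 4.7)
The strategy is to partition $\Gamma$ into $n$ arcs at the specified touching points with $C$ and apply Lemma~\ref{lem:hemisphere} to each arc, producing at least two sign changes of geodesic curvature per arc and thus $\geq 2n$ in total.

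First, I would order the specified $n$ points of $\Gamma \cap C$ cyclically along $C$ as $q_1, \ldots, q_n$. The hypothesis that these points are not all contained in the interior of a semicircle of $C$ is equivalent to the statement that every arc of $C$ between consecutive $q_i, q_{i+1}$ has length at most $\pi$; indeed, the negation would leave a complementary open arc of length less than $\pi$ containing all $n$ points in its interior.

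Next, I would invoke the simplicity of $\Gamma$, together with the fact that $\Gamma$ lies in the closed hemisphere $H$ bounded by $C$ and is tangent to $\partial H = C$ at the $q_i$, to argue topologically that $\Gamma$ traverses the $q_i$ in the same cyclic order in which they appear on $C$ (up to a global reversal): were the two cyclic orders out of sync, one could produce two arcs of $\Gamma$ forced to cross in the interior of $H$. After relabeling, $\Gamma$ therefore decomposes into $n$ arcs $\gamma_1, \ldots, \gamma_n$, where $\gamma_i$ runs from $q_i$ to $q_{i+1}$ through $H$; let $c_i$ denote the short arc of $C$ between these endpoints, which has length at most $\pi$.

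For each $\gamma_i$, I would orient $C$ so that $c_i$ is its forward arc and claim that $\gamma_i$ is compatibly tangent to $C$ at both endpoints $q_i$ and $q_{i+1}$. Intuitively, since the visitation order of $\Gamma$ matches the cyclic order of $C$, the tangent of $\Gamma$ at every $q_i$ points forward along $C$ toward the next touching point; if it pointed backward at some $q_i$, then $\gamma_i$ would have to begin by moving along $C$ away from $q_{i+1}$ and then loop through $H$ before returning, a configuration which a careful analysis of the cells of $H \setminus \Gamma$ rules out by contradiction with simplicity. Granted this compatibility and the length bound $|c_i| \leq \pi$, Lemma~\ref{lem:hemisphere} applies to each $\gamma_i$ and yields at least two sign changes of the geodesic curvature in its interior. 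Summing over $i = 1, \ldots, n$ gives at least $2n$ sign changes of $k$ along $\Gamma$. The ``genuine inflections'' statement then follows immediately: when $\Gamma$ has only finitely many inflections, each sign change of $k$ (between nonvanishing values) must be witnessed by a zero of $k$ at which $k$ changes sign---a genuine inflection by definition---so there are at least $2n$ of these.

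The main obstacle is justifying the compatibility claim rigorously. The intuitive argument above requires a careful topological analysis of how a simple $\C^2$-regular closed curve in a hemispherical disk can be tangent to its boundary circle, ruling out ``reversed'' tangent directions at the $q_i$. A clean approach may be to apply Lemma~\ref{lem:sphereconvex} or a direct Jordan-curve argument to the piecewise-smooth closed loops $\gamma_i \cup c_i$ that bound the cells of $H$ between $\Gamma$ and $C$, thereby constraining the possible tangent behaviors at the $q_i$.
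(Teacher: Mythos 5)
Your overall strategy---cutting $\Gamma$ at the $n$ touch points and applying Lemma \ref{lem:hemisphere} to each piece with compatible tangency at both ends---is exactly the paper's, and your reformulation of the semicircle hypothesis as ``each arc $p_ip_{i+1}\subset C$ has length at most $\pi$'' is correct. But there are two gaps. First, Lemma \ref{lem:hemisphere} requires the arc to meet $C$ \emph{only at its end points}, and your $\gamma_i$ need not satisfy this: the hypothesis only says that $\Gamma\cap C$ \emph{includes} the $n$ chosen points, so $\gamma_i$ may well touch $C$ at interior points. Note that your argument never uses the hypothesis that $\Gamma$ has no geodesic subarcs---a sign that something is missing. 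The paper's fix: since $\Gamma_i$ contains no geodesic subarc, it contains a point $q_i\notin C$; one takes $\ol\Gamma_i$ to be the \emph{maximal} subarc of $\Gamma_i$ containing $q_i$ whose interior is disjoint from $C$, and applies the lemma to $\ol\Gamma_i$ instead. Its endpoints $\ol{p_i},\ol{p_{i+1}}$ still lie in the arc $p_ip_{i+1}\subset C$ (by simplicity), hence cut off a subarc of $C$ of length at most $\pi$, and the two sign changes it yields still lie in distinct $\Gamma_i$'s, so the count of $2n$ survives.

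Second, you flag the compatible-tangency claim as ``the main obstacle'' and leave it as a sketch, but it has a short proof you should supply. Let $\tilde H\subset H$ be the region bounded by $\Gamma$, let $\tilde N$ be the unit normal of $\Gamma$ pointing into $\tilde H$, and let $N$ be the unit normal of $C$ pointing into $H$. At any point $p\in\Gamma\cap C$ the two curves are tangent and $\tilde H\subset H$, so $\tilde N(p)=N(p)$; choosing the tangent fields $T$ along $C$ and $\tilde T$ along $\Gamma$ so that $(T,N)$ and $(\tilde T,\tilde N)$ lie in the same orientation class of $\S^2$ then forces $T(p)=\tilde T(p)$, which is precisely compatible tangency at \emph{every} point of $\Gamma\cap C$, in particular at both endpoints of each $\ol\Gamma_i$. (The matching of the cyclic order of the $p_i$ along $\Gamma$ with their order along $C$, which you rightly note is needed so the arcs do not overlap and the sign changes add up, is a standard Jordan-curve argument that the paper also leaves implicit.) With these two repairs your argument coincides with the paper's proof.
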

\begin{proof}
Let $H$ be the hemisphere bounded by $C$ where $\Gamma$ lies, and $N$ be the unit normal vector field along $C$ which points into $H$. Orient $C$ by choosing a unit tangent vector field $T$ along it so that $(T,N)$ lies in a fixed orientation class of $\S^2$ (e.g.,  assume that $\det(T,N,n)>0$ where $n$ is the outward unit normal of $\S^2$).
Now let $p_i$, $i\in \mathbf{Z}_{n}$, be a cyclical ordering of the given points in $\Gamma\cap C$ with respect to the orientation of $C$. We need to show that the curvature changes sign (at least) twice on each (oriented) subarc $\Gamma_i$ with initial point $p_i$ and final point $p_{i+1}$. 

To this end first note that, since $\Gamma$ has no geodesic subarcs by assumption, there is a point $q_i\in\Gamma_i$ which does not lie on $C$. Let  $\ol\Gamma_i$ 
be the largest subarc of $\Gamma_i$ which contains $q_i$, and whose interior is disjoint from $C$. Then $\ol\Gamma_i$ will meet $C$ only at its end points $\ol{p_i}$ and $\ol{p_{i+1}}$. Further,  since $\Gamma$ is a simple closed curve,  $\ol{p_i}$ and $\ol{p_{i+1}}$ must lie in the arc $p_ip_{i+1}\subset C$ which by assumption has length at most $\pi$. Thus $\ol{p_i}\ol{p_{i+1}}\subset C$ will also have length at most $\pi$.

Now we claim that   $\ol\Gamma_i$ is  compatibly tangent to $C$ at its end points, which will complete the proof by Lemma \ref{lem:hemisphere}. This follows from the assumption that 
$\Gamma$ is simple. Indeed, let $\tilde H\subset H$ be the region bounded by $\Gamma$, and   $\tilde N$ be the unit normal vector field of $\Gamma$ which  points into $\tilde H$. Next let $\tilde T$ be the unit tangent vector field of $\Gamma$ such that $(\tilde T,\tilde N)$ lies in the preferred orientation of class of $\S^2$, which we fixed earlier.  Then $N(p)=\tilde N(p)$ at every  point $p\in\Gamma\cap C$, which in turn yields that $T(p)=\tilde T(p)$ as desired.
\end{proof}

Finally we are ready to prove the main result of this section, which follows from the last theorem via some basic convexity theory. For any set $X\subset\R^n$, let $\conv(X)$ denote the \emph{convex hull} of $X$, i.e., the intersection of all convex sets which contain $X$. 

\begin{cor}\label{cor:hemisphere}
Theorem \ref{thm:main2} holds if $\Gamma$ is simple, regular ($\mathcal{D}^+=\mathcal{S}=0$), and  lies in a hemisphere.
\end{cor}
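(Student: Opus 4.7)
The plan is to derive the corollary directly from Theorem~\ref{thm:osserman}, using convexity to extract from the hypothesis $o\in\conv(\Gamma)$ the required configuration on the boundary great circle of the containing hemisphere. Since $\Gamma$ is simple and regular, $\mathcal{D}^+=\mathcal{S}=0$, so only the inequalities $\mathcal{I}\geq 4$ and $2\mathcal{D}+\mathcal{I}\geq 6$ (from \eqref{eq:2.5} and \eqref{eq:2}) need to be established. One may assume at the outset that $\Gamma$ has no geodesic subarcs, for otherwise $\mathcal{I}=\infty$ and all three conclusions of Theorem~\ref{thm:main2} are immediate.

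Let $H$ be a closed hemisphere containing $\Gamma$, with bounding great circle $C$ and unit pole $\nu$, so that $\langle p,\nu\rangle\geq 0$ on $H$ with equality precisely on $C$. If $o=\sum\lambda_i p_i$ is any convex representation of $o$ by points of $\Gamma$, then the identity $0=\langle o,\nu\rangle=\sum\lambda_i\langle p_i,\nu\rangle$ together with the nonnegativity of each summand forces every $p_i$ with $\lambda_i>0$ to lie in $\Gamma\cap C$. Hence $o\in\conv(\Gamma\cap C)$, and since $C$ is a unit circle centered at $o$, the set $\Gamma\cap C$ cannot be contained in any open semicircle of $C$. I therefore obtain $N$ points of $\Gamma\cap C$ meeting the hypothesis of Theorem~\ref{thm:osserman}, with $N=2$ (the two points necessarily being antipodal) in the subcase where $|\Gamma\cap C|=2$, and $N\geq 3$ otherwise.

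Applying Theorem~\ref{thm:osserman} to these $N$ points yields at least $2N$ genuine inflections, so $\mathcal{I}\geq 2N$. If $N\geq 3$ then $\mathcal{I}\geq 6$, and both \eqref{eq:2} and \eqref{eq:2.5} follow at once. If $N=2$ the two boundary points form an antipodal pair, so $\mathcal{D}\geq 1$ and $\mathcal{I}\geq 4$; hence $2\mathcal{D}+\mathcal{I}\geq 6$ and $\mathcal{I}\geq 4$, as required. For the symmetric case \eqref{eq:2.75}, a symmetric $\Gamma\subset H$ must also satisfy $-\Gamma\subset H$, forcing $\Gamma\subset C$ and therefore $\Gamma=C$; then $k\equiv 0$ gives $\mathcal{I}=\infty$ and the inequality is trivial.

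The one subtle point is the borderline case $N=2$: Theorem~\ref{thm:osserman} alone supplies only four genuine inflections, and the stronger bound \eqref{eq:2} is rescued only by observing that the two intersection points of $\Gamma$ with the bounding great circle must be antipodal and hence account for the missing unit contribution to $\mathcal{D}$. Everything else is a routine application of convexity together with the main theorem of the preceding section.
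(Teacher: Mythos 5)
Your proof is correct and follows essentially the same route as the paper: reduce to Theorem \ref{thm:osserman} by showing $o\in\conv(\Gamma\cap C)$, split into the case of two (necessarily antipodal) boundary points, where $\mathcal{I}\geq 4$ and $\mathcal{D}\geq 1$ rescue \eqref{eq:2}, and the case of three or more, where $\mathcal{I}\geq 6$, with the symmetric inequality \eqref{eq:2.75} dismissed as trivial. The only cosmetic difference is that you locate $o$ in $\conv(\Gamma\cap C)$ by pairing a convex combination against the pole $\nu$, whereas the paper intersects a Carath\'{e}odory polytope with the bounding plane and analyzes whether the resulting face is an edge or a $2$-face; these are interchangeable.
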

\begin{proof}
First note that  \eqref{eq:2.75} holds trivially here, since if $\Gamma$ lies in a hemisphere and is symmetric, then it must be a great circle (and thus have infinitely many inflections). So we just need  to consider \eqref{eq:2} and \eqref{eq:2.5}.

  Since  $o\in\conv(\Gamma)$,  $o$ lies in a convex polytope $P$ with vertices on $\Gamma$, by the theorem of Carath\'{e}odory \cite{schneider:book}. Next note that,  since $\Gamma$ lies in a hemisphere it lies on one side of a plane $\Pi\subset\R^3$ which passes through $o$. In particular, $o$ lies on the boundary of $\conv(\Gamma)$, and therefore on the boundary of $P$. More specifically,  $o$ lies in $F:=\Pi\cap P$, which is either  an edge or a face of $P$. 

If  $F$ is an edge, then  the vertices  of $F$ are antipodal points of $\S^2$, since $o\in F$. Thus if  $C:=\Pi\cap\S^2$ denotes the great circle determined by $\Pi$, then $\Gamma\cap C$ consists of $2$ points which are not all contained in an open semicircle of $C$. So, by Theorem \ref{thm:osserman}, $\mathcal{I}\geq 4$, which  yields \eqref{eq:2.5}. Further, we already have  $\mathcal{D}\geq 1$ due to the existence of a pair of antipodal points on $\Gamma$. So $2\mathcal{D}+\mathcal{I}\geq 6$ which yields \eqref{eq:2}.

If, on the other hand, $F$ is a face, then it must have at least $3$ vertices. Since $o\in F$, not all these vertices can lie in an open semicircle of $C$. Thus  Theorem \ref{thm:osserman} yields that $\mathcal{I}\geq 6$, which completes the proofs of \eqref{eq:2} and \eqref{eq:2.5}.
\end{proof}

\begin{note}\label{note:osserman}
Theorem \ref{thm:osserman} extends a result of Jackson \cite[Thm. 7.1]{jackson:bulletin}, see also \cite{osserman:vertex, jackson:geodesic}, which shows that if a simple $\C^2$-regular closed curve $\Gamma\subset\R^2$ touches its circumscribing circle $C$ in $n$ points, then it must have $2n$ vertices (where  ``vertex" here means a local extremum of geodesic curvature,  and $C$ is the smallest circle which contains $\Gamma$). This  follows by applying the stereographic projection $\pi\colon\S^2-\{(0,0,1)\}\to\R^2$ along the lines discussed in Note \ref{note:stereo}.
Indeed, after a rigid motion and dilation, we may assume that $C=\S^1$. Then $\ol\Gamma:=\pi^{-1}(\Gamma)$ lies in the Southern hemisphere, and touches the equator $C$ at $n\geq 2$ points not all of which are contained in an open semicircle. So, by Theorem \ref{thm:osserman}, $\ol\Gamma$ has (at least) $2n$ inflections, and therefore $2n$ vertices. Hence $\Gamma$ has $2n$ vertices as well, since $\pi$ preserves vertices.
\end{note}

\section{Curve Shortening Flow: A New Proof of  Segre's Theorem}\label{sec:flow}
Here we apply the observations of  the last section,  as summed up in Corollary \ref{cor:hemisphere}, to prove inequality \eqref{eq:2} in the special case where $\Gamma$ is regular and simple ($\mathcal{D}^+=\mathcal{S}=0$). Without much additional effort, we will also obtain in this section proofs of  the inequalities \eqref{eq:2.5}  and \eqref{eq:2.75} in the simple regular case. These constitute new proofs of the theorems of Segre and M\"{o}bius, which were mentioned in the introduction. The main strategy here is to use the curve shortening flow \cite{chou&zhu,grayson,hass&scott,angenent:inflection} to  deform $\Gamma$ through simple $\C^2$-regular  curves, without increasing the number of inflections or antipodal pairs of points of $\Gamma$, until $\Gamma$ just lies in a hemisphere, in which case results of the previous section may be applied. The curve shortening flow is ideally suited to this task due to a number of well known remarkable properties which we summarize below for the reader's convenience. A $\C^{2,1}$ curve is one which has a $\C^{2,1}$ parametrization (i.e., a $\C^2$ parametrization with Lipschitz continuous second derivative). A simple closed curve $\Gamma\subset\S^2$ is said to \emph{bisect} $\S^2$ if the components of $\S^2-\Gamma$ have equal area.

 \begin{lem}\label{lem:flow}
 Let $\Gamma\subset\S^2$ be a simple $\C^{2,1}$-regular  closed curve. There exists a continuous one parameter family $\Gamma_t\subset\S^2$, $0\leq t<t_1$, of simple $\C^2$-regular  closed curves with $\Gamma_0=\Gamma$  such that
 \begin{enumerate}
 \item{$\Gamma_t$ converges to a point, as $t\to t_1$, unless $\Gamma$ bisects $\S^2$.} 
 \item{The number  of  inflections  of $\Gamma_t$ is nonincreasing; furthermore, if $\Gamma_0$ has only finitely many inflections, the number of \emph{genuine} inflections of $\Gamma_t$ is nonincreasing as well.}
  \item{The number  of intersections of $\Gamma_t$ with $-\Gamma_t$  is nonincreasing.}
 \end{enumerate}
 \end{lem}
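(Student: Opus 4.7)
The plan is to take $\Gamma_t$ to be the evolution of $\Gamma = \Gamma_0$ under the curve shortening flow (CSF) on $\S^2$, i.e., the solution of $\partial_t \gamma_t = k\,N$ starting from a $\C^{2,1}$ parametrization of $\Gamma$. Short-time existence in this regularity class is standard (Gage--Hamilton, \cite{chou&zhu}), instantaneous smoothing gives $\C^\infty$-regularity for $t > 0$, and simplicity is preserved by the flow. So on the maximal interval $[0,t_1)$ we obtain a continuous family of simple $\C^2$-regular closed curves, which will be our $\Gamma_t$.

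For (i), I would invoke Grayson's theorem for CSF on surfaces \cite{grayson}: any such simple closed solution either contracts to a round point in finite time, or converges smoothly to a closed geodesic. On $\S^2$ the closed geodesics are precisely the great circles, i.e., the simple closed curves that bisect $\S^2$; together with the area evolution $A'(t) = A(t) - 2\pi$ (Gauss--Bonnet on one component of $\S^2 - \Gamma_t$, using $K \equiv 1$), this forces the geodesic alternative to occur only when $\Gamma_0$ itself bisects $\S^2$. Hence if $\Gamma$ does not bisect $\S^2$, then $\Gamma_t$ contracts to a point as $t \to t_1$.

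For (ii), I would use that along $\Gamma_t$ the geodesic curvature satisfies the parabolic equation $k_t = k_{ss} + (k^2 + 1)k$, which is linear in $k$ with smooth coefficients. Angenent's Sturmian theorem for zero sets of such equations \cite{angenent:inflection} then gives that both the total number of zeros of $k(\cdot, t)$ and the number of sign-changing (genuine) zeros are nonincreasing in $t$, yielding the inflection statements. For (iii), the antipodal map is an isometry of $\S^2$, so $-\Gamma_t$ is the CSF evolution of $-\Gamma_0$, another CSF solution; Angenent's intersection principle \cite{angenent:inflection} then bounds the number of intersections of $\Gamma_t$ with $-\Gamma_t$ monotonically. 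The degenerate centrally symmetric case $\Gamma = -\Gamma$ is automatic, since then $\Gamma_t = -\Gamma_t$ throughout.

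The hardest point I expect is accommodating the borderline regularity at $t = 0$, since the Sturmian zero-set and intersection theorems are stated most cleanly for smooth solutions. I would handle this either by appealing to refined versions tailored to the $\C^{2,1}$ endpoint, or by approximating $\Gamma$ by $\C^\infty$ curves, applying the lemma to the approximants, and passing to the limit using continuous dependence of the flow in a topology strong enough to preserve inflection counts and intersection numbers with the antipodal curve. A secondary subtlety is that a priori $\Gamma_t$ and $-\Gamma_t$ could be tangent at $t = 0$ rather than transverse; the strong form of the intersection principle handles this case by showing such tangencies instantly resolve and strictly decrease the count.
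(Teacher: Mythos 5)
Your proposal is correct and follows essentially the same route as the paper: evolve by curve shortening flow, use Grayson's theorem together with the Gauss--Bonnet area law $A'(t)=A(t)-2\pi$ to rule out convergence to a great circle for (i), invoke Sturmian/Angenent zero-counting for the geodesic curvature for (ii), and apply Angenent's intersection-number monotonicity to $\Gamma_t$ and its antipodal image for (iii). The paper simply cites Grayson's Cor.~2.5 and Lem.~1.5 for the genuine-inflection refinement where you state the parabolic equation for $k$ explicitly, but the substance is identical.
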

 \begin{proof}
 Let $\gamma_0\colon\S^1\to\Gamma$ be a $\C^{2,1}$ regular parametrization, and
 $\gamma_t$ be the evolution of $\gamma_0$ according to the curve shortening flow, i.e., the solution to  the equation
 $$
 \frac{\partial}{\partial t}\gamma_t(s)=k_t(s)N_t(s),
 $$
 where $N_t$ is a continuous choice of unit normal vector fields along $\gamma_t$, and $k_t$ is the corresponding geodesic curvature.
Grayson's theorem \cite{grayson} states that $\Gamma_t:=\gamma_t(\S^1)$ is a simple $\C^\infty$-regular closed curve, for some interval $0<t<t_1$, which converges either to a closed geodesic or a point as $t\to t_1$. Let $\Omega_t$ be a continuous choice of regions in $\S^2$ bounded by $\Gamma_t$, $A(t)$ be the area of $\Omega_t$, and suppose that $N_t$ always points into $\Omega_t$. After possibly replacing $\Omega_t$ with its complements, we may assume that $A(0)\leq 2\pi$. In particular, if $\Gamma_t$ does not bisect $\S^2$, we have $A(0)<2\pi$. 

It follows from the Gauss-Bonnet theorem that $A'(t)=A(t)-2\pi$ \cite{angenent:inflection}. Thus $A(t)$ is decreasing, and so it cannot approach $2\pi$. Consequently $\Gamma_t$ cannot converge to a great circle, and must therefore converge to a point by Grayson's theorem. This settles item (i). For item (ii)  see \cite{angenent:inflection} or \cite[Cor. 2.5]{grayson} which establish that the number of inflections never increases; furthermore, see \cite[Lem. 1.5]{grayson}, where it is shown that for $t>0$ the only times when $k_t$ and $k'_t$ both vanish at the same point are when two or more inflections merge. In particular, non-genuine  inflections exist only at isolated times before disappearing, and the non-genuine inflections of $\Gamma_0$ immediately disappear as well.
Finally, item (iii) follows from a theorem of  Angenent \cite[p. 179]{angenent:parabolic} who showed that the number of intersections of a pair of curves under the curve shortening flow is nonincreasing.
 \end{proof}

To apply the curve shortening flow we need the initial curve to be $\C^{2,1}$ according to Lemma \ref{lem:flow}; however, in Theorem \ref{thm:main2} we are assuming only that the given curve is $\C^2$. To resolve this discrepancy we need the following basic approximation result:

\begin{lem}\label{lem:smoothingk}
Let $f\colon\S^1\to\R$ be a continuous function with a finite number of zeros. Then for any given $\epsilon>0$ there exists a $\C^\infty$ function $\tilde f\colon\S^1\to\R$ such that $\|\tilde f-f\|\leq\epsilon$, and $\tilde f$ has no more zeros than $f$ has.
\end{lem}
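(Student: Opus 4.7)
The plan is to approximate $f$ in two stages: first by a piecewise linear function $f_1$ having exactly the same zeros as $f$, then by mollifying $f_1$ with a smooth bump to obtain $\tilde f$, arguing carefully that the mollification introduces no additional zeros.

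For the first stage, enumerate the zeros $z_1,\dots,z_n$ cyclically along $\S^1$, let $\sigma_i\in\{+,-\}$ denote the sign of $f$ on the arc $(z_i,z_{i+1})$, and choose $\delta>0$ small enough that the $\delta$-neighborhoods $V_i:=(z_i-\delta,z_i+\delta)$ are pairwise disjoint and $|f|<\epsilon/4$ on each $V_i$. Set $m:=\min_{\S^1\setminus\bigcup V_i}|f|>0$, and partition $\S^1$ with nodes that include every $z_i$ and $z_i\pm\delta$ and are otherwise fine enough that the oscillation of $f$ on each partition interval is less than $\min(\epsilon/4,\,m/2)$. Define $f_1$ to agree with $f$ at every node except $f_1(z_i):=0$, and extend linearly in between. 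Uniform continuity gives $\|f_1-f\|_\infty<\epsilon/2$; on each $V_i$, $f_1$ is piecewise linear from $f(z_i-\delta)$ through $0$ at $z_i$ to $f(z_i+\delta)$, so $z_i$ is its unique zero; and $|f_1|\ge m/2$ outside $\bigcup V_i$. Thus $f_1$ has exactly $n$ zeros.

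For the mollification stage, set $\tilde f:=f_1*\rho_\eta$ for a standard nonnegative smooth bump $\rho_\eta$ with $\int\rho_\eta=1$ and support in $(-\eta,\eta)$, $\eta\ll\delta$; the usual estimates give $\|\tilde f-f\|_\infty<\epsilon$. The main obstacle is controlling the zero count of $\tilde f$. Outside $\bigcup V_i$ we have $|f_1|\ge m/2$ while convolution perturbs by at most $m/4$ for small $\eta$, so $\tilde f\ne 0$ there. Near each $V_i$ I would split into cases. If $\sigma_{i-1}\neq \sigma_i$ (a genuine sign change at $z_i$), the two piecewise-linear slopes of $f_1$ on $V_i$ share the same sign, so $f_1$ is strictly monotone on $V_i$; therefore $\tilde f'=f_1'*\rho_\eta$ has that same constant strict sign throughout $(z_i-\delta+\eta,\, z_i+\delta-\eta)$, forcing $\tilde f$ to be strictly monotone and contribute at most one zero, while $f_1$ keeps a fixed sign across each endpoint $z_i\pm\delta$ (by our choice of $\sigma_{i-1},\sigma_i$), which rules out zeros in the boundary strips. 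If instead $\sigma_{i-1}=\sigma_i$ (a touching zero of $f_1$), then $f_1\ge 0$ or $f_1\le 0$ holds on a full neighborhood of $V_i$ with equality only at the single point $z_i$, so $\tilde f(x)=\int f_1(x-t)\rho_\eta(t)\,dt$ inherits that strict sign everywhere, giving no zeros at all. Summing over $i$, $\tilde f$ has at most $n$ zeros, as required.
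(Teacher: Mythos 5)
Your proof is correct and follows essentially the same route as the paper's: replace $f$ near each zero by a piecewise-linear function and then mollify, controlling the zero count by linearity near the zeros and by uniform closeness (hence nonvanishing) away from them. The only real difference is in the mollification step, where you run an explicit sign/monotonicity case analysis of $f_1*\rho_\eta$ over each $V_i$, whereas the paper uses a single chord over $[t_i-\delta,t_i+\delta]$ and simply observes that the convolution agrees with that linear function on the slightly shrunken interval $[t_i-\delta+\lambda,\,t_i+\delta-\lambda]$, hence has at most one zero there.
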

\begin{proof}
Let $t_i\in\S^1\simeq\R/2\pi$, $i=1,\dots, n$, be the zeros of $f$, and choose $\delta>0$ sufficiently small so that  $f\neq 0$ on $[t_i-\delta, t_i)\cup (t_i, t_i+\delta]$.  Next let $\ol f\colon \S^1\to\R$ be the function which is obtained from $f$ by replacing the graph of $f$ over each of the intervals $[t_i-\delta, t_i+\delta]$ with the line segment joining the points $(t_i-\delta, f(t_i-\delta))$ and $(t_i+\delta, f(t_i+\delta))$. Choosing $\delta$ sufficiently small, we may assume that $\|f\|\leq\epsilon/4$ on $[t_i-\delta,  t_i+\delta]$, which yields that  $\| \ol f \| \leq\epsilon/4$ on $[t_i-\delta,  t_i+\delta]$ as well. So it follows that $\|\ol f-f\|\leq\epsilon/2$. Also note that $\ol f$ has no more zeros than $f$ has, $\ol f$ is linear on $[t_i-\delta, t_i+\delta]$, and $\ol f\equiv f\neq 0$ outside these neighborhoods. 

For $\lambda>0$, let $\theta_\lambda\colon\R\to\R$ be a $\C^\infty$ nonnegative function such that  $\int_\R\theta_\lambda=1$, and $\theta_\lambda\equiv 0$ outside $(-\lambda,\lambda)$. Also let  $\tilde f_\lambda:=\ol f \ast \theta_\lambda$ be the convolution of $\ol f$ with $\theta_\lambda$. Choosing $\lambda$ sufficiently small, we may assume that $\|\tilde f_\lambda-\ol f\|\leq\epsilon/2$. Thus, 
$$\|\tilde f_\lambda-f\|\leq\|\tilde f_\lambda-\ol f\|+\|\ol f-f\|=\epsilon.$$
 Further,  $\tilde f_\lambda\equiv\ol f$ on each interval $I_i:=[t_i-\delta+\lambda, t_i+\delta-\lambda]$, since $\ol  f$ is linear on $[t_i-\delta, t_i+\delta]$. So $\tilde f_\lambda$ has at most one zero in each $I_i$. Also note that $\ol f\neq 0$ outside  the interiors of $I_i$ (which is a compact region). Thus, since $\tilde f_\lambda\to \ol f$, as $\lambda\to 0$, we may choose $\lambda$ so small that $\tilde f_\lambda\neq 0$ outside $I_i$. Then $\tilde f_\lambda$ is the desired function.
\end{proof}

The last lemma leads to the next observation. Here $\|\cdot\|_{\C^2}$ denotes the standard (supremum) norm in the space of $\C^2$ maps $f\colon I\to\R^n$.

\begin{prop}\label{prop:smoothingGamma}
Let $\gamma\colon\S^1\to \S^2$ be a regular $\C^2$ closed curve with finitely many inflections. Then for every $\epsilon>0$ there is a regular $\C^\infty$  closed curve $\tilde \gamma\colon\S^1\to \S^2$ which has no more inflections than $\gamma$ has, and $\|\tilde\gamma-\gamma\|_{\C^2}\leq\epsilon$. 
\end{prop}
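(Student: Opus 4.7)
My plan is to smooth the geodesic curvature function of $\gamma$ using Lemma~\ref{lem:smoothingk}, and then reconstruct a smooth approximating curve by integrating the spherical Frenet equation.

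First, we may assume without loss of generality that $\gamma$ is parametrized by arc length, so that $|\gamma'|\equiv 1$, and let $k\colon\S^1\to\R$ denote the corresponding geodesic curvature. Since $\gamma$ is $\C^2$, $k$ is continuous, and by hypothesis it has only finitely many zeros, which are precisely the inflections of $\gamma$. Applying Lemma~\ref{lem:smoothingk} with a small parameter $\eta>0$ produces a $\C^\infty$ function $\tilde k\colon\S^1\to\R$ with $\|\tilde k-k\|_\infty\leq\eta$ and no more zeros than $k$.

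Next, recall that any unit-speed $\C^2$ curve on $\S^2$ satisfies $\gamma''=-\gamma+k(\gamma\times\gamma')$, where $k$ is the geodesic curvature. I would therefore let $\tilde\gamma$ be the solution to the initial value problem
\begin{equation*}
\tilde\gamma''(s)=-\tilde\gamma(s)+\tilde k(s)\,\tilde\gamma(s)\times\tilde\gamma'(s),\qquad\tilde\gamma(0)=\gamma(0),\quad\tilde\gamma'(0)=\gamma'(0).
\end{equation*}
A direct computation shows that the constraints $|\tilde\gamma|\equiv 1$ and $|\tilde\gamma'|\equiv 1$ are preserved by this equation, so $\tilde\gamma$ is automatically a $\C^\infty$ unit-speed spherical curve whose geodesic curvature is precisely $\tilde k$; in particular it has no more inflections than $\gamma$. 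Continuous dependence of ODE solutions on parameters then yields $\tilde\gamma\to\gamma$ in $\C^2$ as $\eta\to 0$, giving the desired closeness.

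The principal obstacle is that $\tilde\gamma$ need not close up: one may have $\tilde\gamma(L)\neq\gamma(0)$ or $\tilde\gamma'(L)\neq\gamma'(0)$, where $L$ is the length of $\gamma$. To remedy this I would exploit the remaining freedom in choosing $\tilde k$ by adding a suitable finite-dimensional family of small $\C^\infty$ perturbations, each supported in an open arc on which $|k|$ is bounded away from zero (so that no new zeros of $\tilde k$ appear). Forcing the resulting closure defect to vanish via the implicit function theorem, with surjectivity of the linearization following from a standard Jacobi-field controllability argument along $\gamma$, then delivers a closed smooth curve $\tilde\gamma$ satisfying all the required conclusions.
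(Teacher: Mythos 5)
Your first two steps coincide with the paper's: smooth the geodesic curvature via Lemma \ref{lem:smoothingk}, then reconstruct a $\C^\infty$ curve by solving the spherical Frenet equation with the original initial data, invoking continuous dependence to get $\C^2$-closeness. (Your verification that the constraints $|\tilde\gamma|=|\tilde\gamma'|=1$ propagate, and hence that the geodesic curvature of the reconstructed curve is exactly $\tilde k$, is correct.) You have also correctly identified the real difficulty, namely that the reconstructed curve need not close up. Where you diverge is in how you close it, and this is where your argument has a genuine gap: the entire closure step rests on the assertion that the linearization of the endpoint map (on the unit tangent bundle of $\S^2$, a three-dimensional target) is surjective when restricted to curvature perturbations supported in arcs where $|k|$ is bounded away from zero. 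This is the crux of your proof and it is dispatched in one clause. It is in fact true --- one can check that the iterated brackets of the curvature generator with the frame generator span $\mathfrak{so}(3)$ precisely where $k\neq 0$, so the adjoint orbit along any inflection-free arc spans --- but that computation, or some substitute for it, must actually be carried out; "a standard Jacobi-field controllability argument" is not a proof, and surjectivity of the differential of an endpoint map can fail at special controls, so one must rule that out for the specific curve at hand.

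The paper avoids this machinery entirely. It solves the same ODE on $[-2\pi,4\pi]$ to get a non-closed $\ol\gamma$ that is $\C^2$-close to the periodic $\gamma$, and then closes up by brute force in the ambient $\R^3$: it sets $\tilde\gamma(t):=\phi(t)\ol\gamma(t)+(1-\phi(t))\ol\gamma(t-2\pi)$ for a cutoff $\phi$ supported near the seam, and renormalizes $\tilde\gamma/\|\tilde\gamma\|$ to land back on $\S^2$. Since both $\ol\gamma(t)$ and $\ol\gamma(t-2\pi)$ are $\C^2$-close to the same periodic curve, the glued curve is still $\C^2$-close to $\gamma$; the seam is placed in an interval where $\gamma$ has no inflections, so no new inflections are created there, and elsewhere the curvature is exactly $\ol k$. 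The trade-off is that the paper's curve no longer has prescribed curvature near the seam (only closeness is used there), whereas your construction, if completed, would control the curvature globally; but the paper's route is elementary and avoids the surjectivity question altogether. If you want to keep your IFT approach, you must supply the spanning argument; otherwise, the interpolation-and-renormalization trick is the cheaper fix.
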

\begin{proof}
Suppose that $\gamma$ has constant speed, let $k$ be the geodesic curvature of $\gamma$, and $\ol k\colon\S^1\to\R$ be the smoothing of $k$ given by Lemma \ref{lem:smoothingk}. We may identify $\ol k$ with a $2\pi$-periodic function on $\R$, and assume that $\ol k(0)\neq 0$. Now let $\ol\gamma\colon\R\to \S^2$ be the curve with geodesic curvature $\ol k$ such that $\ol\gamma(0)=\gamma(0)$ and $\ol\gamma'(0)=\gamma'(0)$. Then $\ol\gamma$ is $\C^\infty$ and, choosing $\ol k$ sufficiently close to $k$, we may assume that  $\|\ol\gamma-\gamma\|_{\C^2}\leq \epsilon$ on $[-2\pi, 4\pi]$. This follows from basic ODE theory which guarantees the existence and uniqueness of $\ol\gamma$ together with  its continuous dependence on $\ol k$. Indeed, $\ol\gamma\to\gamma$ with respect to the $\C^2$ norm as $\ol k\to k$.

Next we are going to ``close up" $\ol\gamma$ by a perturbation as follows. Assume that $\delta$ is so small that $\gamma$ has no inflections on $[-\delta, \delta]$. Let $\phi\colon [0,2\pi+\delta]\to\R$ be any $\C^\infty$ nonincreasing function such that $\phi\equiv 1$ on $[0, 2\pi-\delta]$ and $\phi\equiv 0$ on $[2\pi,2\pi+\delta]$. Now define $\tilde\gamma\colon [0,2\pi+\delta]\to\R^3$ by
$$
\tilde\gamma(t):=\phi(t)\ol\gamma(t)+\big(1-\phi(t)\big)\ol\gamma(t-2\pi).
$$
Then $\tilde\gamma(t)=\ol\gamma(t)=\tilde\gamma(t+2\pi)$ on $[0,\delta]$. So setting $\tilde \gamma(t+2\pi):=\tilde \gamma(t)$, and then replacing $\tilde \gamma$ with $\tilde \gamma/\|\tilde \gamma\|$, we obtain a
  regular $\C^\infty$ closed curve $\tilde \gamma\colon\S^1\simeq\R/2\pi\to\S^2$.
Since $\ol\gamma$ may be chosen  arbitrarily $\C^2$-close to $\gamma$, it follows that $\tilde\gamma$  may be arbitrarily $\C^2$-close to $\gamma$. In particular, since $\gamma$ has no inflections on $[-\delta, \delta]$, then we may assume that $\tilde\gamma$ has no inflections on that interval either. Thus $\tilde\gamma$ has no more inflections than $\gamma$ has.
\end{proof}

Finally we need the following basic  result which will allow us to  perturb curves locally without increasing the number of their isolated inflections. This lemma, which will be invoked frequently in the following pages, applies to all curves which may be represented locally as the graph of a function $f\colon[a,b]\to\R$ (e.g., all curves which are $\C^1$ or convex). Note that 
the graph of $f$  has an inflection  at $(x, f(x))$, if, and only if, $f''(x)=0$. We say that a pair of functions $f_1$, $f_2\colon [a,b]\to\R$ have the \emph{same regularity properties} provided that $f_1$ is $\C^k$ on a subset $U\subset[a,b]$, if, and only if, $f_2$ is $\C^k$ on $U$. 

\begin{lem}\label{lem:perturb}
Let $f\colon[-a,a]\to\R$ be  a continuous function. Suppose that $f$ is $\C^2$ on $[-a,0)\cup(0,a]$, and $f''\neq 0$ except possibly at $0$. Then for any $\epsilon>0$, there is a function $\tilde f\colon [-a,a]\to\R$ with the same regularity properties as $f$, such that $\tilde f\geq f$, $\tilde f(0)>f(0)$,  $\tilde f\equiv f$ near $\pm a$,  $\|\tilde f-f\|\leq\epsilon$, and in case $f$ is $\C^2$, $\|\tilde f-f\|_{\C^2}\leq\epsilon$. Finally, $f''_\epsilon$ has no more zeros than  $f''$ has.
\end{lem}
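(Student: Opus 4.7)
The plan is to construct $\tilde f$ as a perturbation $\tilde f := f + \delta\,\chi$, where $\chi\colon[-a,a]\to[0,1]$ is a fixed $\C^\infty$ bump whose second derivative vanishes both near $0$ and near $\pm a$, and $\delta>0$ is a small parameter to be chosen. Specifically, I will take $\chi$ nonnegative, with $\chi\equiv 1$ on $[-a/3,a/3]$, $\chi\equiv 0$ on $[-a,-2a/3]\cup[2a/3,a]$, and smooth interpolation in between. In particular, $\chi(0)=1$, $\chi$ vanishes on a neighborhood of $\pm a$, and $\chi''\equiv 0$ outside the pair of compact transition intervals $K:=[-2a/3,-a/3]\cup[a/3,2a/3]$.

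With this choice, the five easy conditions are immediate. The inequality $\tilde f\geq f$ follows from $\chi\geq 0$; the strict inequality $\tilde f(0)>f(0)$ follows from $\chi(0)=1$; the identity $\tilde f\equiv f$ near $\pm a$ follows from $\chi\equiv 0$ there; and since $\chi$ is $\C^\infty$ throughout $[-a,a]$, adding $\delta\chi$ alters the local smoothness of $f$ at no point, so $\tilde f$ has the same regularity properties as $f$. For the norm bounds, $\|\tilde f-f\|=\delta\|\chi\|_\infty=\delta$, and when $f$ is $\C^2$, $\|\tilde f-f\|_{\C^2}\leq \delta\,\|\chi\|_{\C^2}$; both may be made at most $\epsilon$ by shrinking $\delta$.

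The only substantive step, and the main obstacle, is verifying that $\tilde f'' = f''+\delta\chi''$ acquires no zeros beyond those of $f''$. I will handle this by a three-region splitting driven by the support of $\chi''$. On $[-a/3,a/3]$ and on $[-a,-2a/3]\cup[2a/3,a]$, $\chi''$ vanishes identically, so there $\tilde f''=f''$ and the zero sets coincide exactly; any new zero of $\tilde f''$ must therefore lie in the compact set $K$. But $K$ is disjoint from $\{0\}$, and by hypothesis $f''$ is continuous and nowhere zero on $K$, so $c:=\min_K|f''|>0$. Requiring in addition $\delta\,\|\chi''\|_\infty<c$ forces $\tilde f''$ to retain the sign of $f''$ on $K$, hence to stay nonzero there. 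A $\delta>0$ satisfying the three smallness constraints simultaneously exists, and the resulting $\tilde f$ possesses all of the required properties.
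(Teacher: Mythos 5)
Your proof is correct and follows essentially the same route as the paper's: both add a small multiple $\delta\phi$ of a $\C^\infty$ bump that is identically $1$ near $0$ and identically $0$ near $\pm a$, so that $\tilde f''=f''$ outside a compact transition set disjoint from the origin, where $|f''|$ is bounded below and the perturbation can be made too small to create new zeros. The only cosmetic difference is that you fix $\delta$ via the explicit bound $\delta\|\chi''\|_\infty<\min_K|f''|$, whereas the paper argues by $\C^2$-convergence as $\delta\to 0$; these are the same estimate.
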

\begin{proof}
 Let $\phi\colon [-a,a]\to\R$ be a $\C^\infty$  nonnegative function such that $\phi\equiv 0$ outside $(-a/2,a/2)$ and $\phi\equiv1$ on $[-a/4,a/4]$. Set
$
\tilde f_\delta:= f+\delta\phi.
$
It is easy to see that, for sufficiently small $\delta>0$, $\tilde f_\delta$ has all the required properties except possibly the last one. To see that the last requirement is met as well,  note that $\tilde f''_\delta\equiv f''$ outside of $A:=[-a/2,-a/4]\cup[a/4,a/2]$. Thus it suffices to check that $\tilde f''_\delta\neq 0$ on $A$. This is indeed the case since as $\delta\to 0$, $\tilde f_\delta\to f$ with respect to the $\C^2$ norm on $A$, and $f''\neq 0$ on $A$,  by assumption. 
\end{proof}

We are now ready to prove the main result of this section:

\begin{prop}\label{prop:DS}
Theorem \ref{thm:main2} holds when $\Gamma$ is simple and regular ($\mathcal{D}^+=\mathcal{S}=0$).
\end{prop}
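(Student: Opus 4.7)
The plan is to implement the CSF strategy announced in the introduction: deform $\Gamma$ by curve shortening flow without increasing $\mathcal{I}$ or $\mathcal{D}$ until it just lies in a closed hemisphere, then invoke Corollary~\ref{cor:hemisphere}. To set this up I would first reduce to a smooth setting: if $\mathcal{I}=\infty$ all three inequalities are vacuous, so assume $\mathcal{I}<\infty$; by Proposition~\ref{prop:smoothingGamma} approximate $\Gamma$ in $\C^2$-norm by a simple $\C^\infty$-regular closed curve with no more inflections, and by a further $\C^2$-small perturbation render the intersections of the curve with its antipodal image transverse, so that $\mathcal{D}$ is finite and upper semicontinuous. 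Since $\mathcal{D}^+=\mathcal{S}=0$ persists under such perturbations and $o\in\conv(\Gamma)$ is preserved when $o\in\inte(\conv(\Gamma))$ (the boundary case being recovered at the end by a limiting argument), we may assume $\Gamma$ itself is $\C^{2,1}$-regular.

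Next, apply Lemma~\ref{lem:flow} to obtain a CSF evolution $\Gamma_t$, $0\leq t<t_1$, of simple $\C^2$-regular closed curves, and set
\[
t^*:=\inf\bigl\{t\in[0,t_1)\,:\,\Gamma_t\subset H\text{ for some closed hemisphere }H\bigr\}.
\]
Assuming $\Gamma$ does not bisect $\S^2$, Lemma~\ref{lem:flow}(i) forces $\Gamma_t$ to shrink to a point, so $t^*<t_1$. A compactness argument — taking hemispheres $H_n\supset\Gamma_{t_n}$ with $t_n\searrow t^*$ and passing to a limit of their inner unit normals — shows the infimum is attained by some closed hemisphere $H\supset\Gamma_{t^*}$; moreover $\Gamma_{t^*}$ fits in no open hemisphere (else by continuity this would persist slightly before $t^*$), which is equivalent to $o\in\conv(\Gamma_{t^*})$. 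Thus Corollary~\ref{cor:hemisphere} applied at $t^*$ yields
\[
2\mathcal{D}(\Gamma_{t^*})+\mathcal{I}(\Gamma_{t^*})\geq 6\qquad\text{and}\qquad\mathcal{I}(\Gamma_{t^*})\geq 4,
\]
and the monotonicity statements of Lemma~\ref{lem:flow}(ii)--(iii), together with $\mathcal{D}^+=\mathcal{S}=0$ for the entire flow, transfer these bounds back to $\Gamma_0=\Gamma$, proving \eqref{eq:2} and \eqref{eq:2.5}.

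The main obstacle is the degenerate case where $\Gamma$ bisects $\S^2$, which in particular subsumes the symmetric inequality \eqref{eq:2.75}: by uniqueness CSF preserves the relation $\Gamma_t=-\Gamma_t$, and a symmetric simple closed curve on $\S^2$ other than a great circle is bisecting and lies in no closed hemisphere, so the above argument stalls. For a bisecting but non-symmetric $\Gamma$ I would apply an arbitrarily small rigid rotation of $\S^2$ to break the bisection, run the argument above, and take a limit. For the symmetric case I would apply a symmetry-breaking $\C^2$-small perturbation — for example a tiny rotation applied to $\Gamma$ through a smooth cutoff supported away from one antipodal pair — producing a non-symmetric simple curve whose antipodal intersection structure persists near the limiting great circle $C=\lim_{t\to t_1}\Gamma_t$. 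At the hemisphere-fitting time $t^*$ of the perturbed curve, the approximately $2$-to-$1$ structure of $\Gamma_t\to C$ inherited from the symmetry forces the boundary great circle $\partial H$ to meet the curve in at least three points not lying in any open semicircle, so that Theorem~\ref{thm:osserman} gives $\mathcal{I}\geq 6$ uniformly in the perturbation, and \eqref{eq:2.75} follows in the perturbation limit.
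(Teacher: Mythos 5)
Your treatment of \eqref{eq:2} and \eqref{eq:2.5} in the non-bisecting case is essentially the paper's argument (smooth via Proposition~\ref{prop:smoothingGamma}, flow, stop at the first hemisphere time $t^*$, note $o\in\conv(\Gamma_{t^*})$, apply Corollary~\ref{cor:hemisphere}, pull back via Lemma~\ref{lem:flow}). The gaps are in how you handle bisecting and symmetric curves. First, an ``arbitrarily small rigid rotation of $\S^2$'' cannot break bisection: rotations are isometries, so the two complementary regions keep equal areas. The paper instead breaks bisection with a \emph{local one-sided push} (Lemma~\ref{lem:perturb}) supported near a point that is neither an inflection nor a member of an antipodal pair; this changes the enclosed area while provably not increasing $\mathcal{I}$ or $\mathcal{D}$, which is exactly what your rotation idea fails to deliver.

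Second, and more seriously, your argument for \eqref{eq:2.75} does not close. (a) A ``tiny rotation applied through a smooth cutoff'' is not inflection-non-increasing, and your final step transfers the bound in the wrong direction: knowing that every perturbed curve has $\mathcal{I}\geq 6$ says nothing about the original curve, since sign changes of the geodesic curvature can collapse in a $\C^2$ limit (consider $k_\epsilon=\epsilon\sin(3x)\to 0$). Lower bounds on inflection counts are not lower semicontinuous; the whole method only works when the perturbation itself is certified not to increase the count. (b) The claim that at time $t^*$ the perturbed curve meets $\partial H$ in three points not contained in an open semicircle is unsubstantiated; the generic first contact with a hemisphere is at exactly two antipodal points, which via Theorem~\ref{thm:osserman} gives only $\mathcal{I}\geq 4$ --- not the $\mathcal{I}\geq 6$ that \eqref{eq:2.75} requires when $\mathcal{D}^+=\mathcal{S}=0$. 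The paper sidesteps all of this with Lemma~\ref{lem:segremobius}: granting \eqref{eq:2.5}, a symmetric curve with only $4$ genuine inflections would have two antipodal arcs of equal curvature sign, contradicting $k(-p)=-k(p)$; since the number of sign changes is even, $\mathcal{I}\geq 6$. You should replace your entire symmetric-case analysis with this short parity argument, and fix the bisection-breaking step as above. (A minor further point: when $o\in\partial\conv(\Gamma)$ the curve already lies in a hemisphere and Corollary~\ref{cor:hemisphere} applies directly; no ``limiting argument'' is needed, and one would suffer from the same semicontinuity problem.)
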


One step in the proof of the above result worth being highlighted is the following observation which shows that M\"{o}bius's theorem follows from Segre's result:

\begin{lem}\label{lem:segremobius}
Inequality \eqref{eq:2.5} implies \eqref{eq:2.75} when $\mathcal{D}^+=\mathcal{S}=0$.
\end{lem}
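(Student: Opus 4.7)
The plan is to combine the assumed inequality \eqref{eq:2.5} with a parity argument on the geodesic curvature $k$ of $\gamma$ arising from the antipodal symmetry of $\Gamma$. First, since $\gamma$ is simple with $\gamma(\S^1) = -\gamma(\S^1)$, the map $\tau := \gamma^{-1}\circ\sigma\circ\gamma\colon\S^1\to\S^1$, where $\sigma(p) := -p$ denotes the antipodal map, is a well-defined, $\C^2$, fixed-point-free involution. Using $\gamma\circ\tau = -\gamma$, differentiating twice, and substituting into the formula $k = \det(\gamma,\gamma',\gamma'')/|\gamma'|^3$ for the geodesic curvature of a spherical curve in arbitrary parametrization, a short calculation yields the key antisymmetry $k\circ\tau = -k$.

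Next, if $\gamma$ has infinitely many inflections, then $\mathcal{I}=\infty$ and \eqref{eq:2.75} is immediate, so I would invoke the convention in Theorem \ref{thm:main2} that $\mathcal{I}$ counts \emph{genuine} inflections. Let $I\subset\S^1$ be a closed fundamental arc for $\tau$, with endpoints $a$ and $\tau(a)$, chosen so that neither endpoint is an inflection (possible since the inflections are finite in number). Then $k(a)$ and $k(\tau(a)) = -k(a)$ have opposite signs, so $k|_I$ undergoes an \emph{odd} number of sign changes in $\inte(I)$. Since the genuine inflections in $\inte(I)$ are precisely those sign changes, their count $n_1$ is odd; by $k\circ\tau = -k$ the count in $\inte(\tau(I))$ is also $n_1$, and hence $\mathcal{I} = 2n_1$.

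Symmetry guarantees $o = \frac{1}{2}(p + (-p)) \in \conv(\Gamma)$ for any $p\in\Gamma$, so the hypotheses of \eqref{eq:2.5} hold for $\gamma$; applying \eqref{eq:2.5} yields $2n_1 = \mathcal{I} \geq 4$, so $n_1\geq 2$. Combined with the parity $n_1 \equiv 1\pmod{2}$, we conclude $n_1\geq 3$, whence $\mathcal{I} = 2n_1 \geq 6$, which is \eqref{eq:2.75}.

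The main obstacle is establishing the antisymmetry $k\circ\tau = -k$ in a parametrization-invariant way: the normal field $N = \gamma\times\gamma'/|\gamma'|$ is invariant under $t\mapsto\tau(t)$ while $\gamma$, $\gamma'$, and $\gamma''$ each pick up a minus sign, and one must track the Jacobian factor $\tau'$ carefully through the computation. Everything after that reduces to a one-line parity observation followed by a single invocation of \eqref{eq:2.5}.
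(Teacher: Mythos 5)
Your proof is correct and follows essentially the same route as the paper: both rest on the antisymmetry $k(-p)=-k(p)$ of the geodesic curvature under the free antipodal involution of $\Gamma$, combined with the bound $\mathcal{I}\geq 4$ from \eqref{eq:2.5} and a parity argument. The paper packages the parity step as a contradiction for $\mathcal{I}=4$ (the four arcs between genuine inflections alternate in sign, so a pair of antipodal arcs would share a sign), whereas your fundamental-domain count $\mathcal{I}=2n_1$ with $n_1$ odd makes the same observation slightly more explicit.
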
 
\begin{proof}
Suppose that $\Gamma=-\Gamma$. We may assume that $\Gamma$ has only finitely many inflections. Then the number of genuine inflections $\mathcal{I}\geq 4$ by \eqref{eq:2.5}. We need to show that  $\mathcal{I}\geq 6$. Since 
$\mathcal{I}$ must be even,  it suffices then to show that $\mathcal{I}>4$.  Suppose, towards a contradiction, that $\mathcal{I}=4$. Then the sign of the four arcs of $\Gamma$ in between these inflections alternate as we go around $\Gamma$. In particular, we end up with a pair of opposite arcs with the same sign. This is  the desired contradiction because antipodal reflection in $\S^2$ switches the sign of geodesic curvature, i.e., $k(-p)=-k(p)$ for every point $p\in\Gamma$.
\end{proof}

Next we complete the rest of the proof:

\begin{proof}[Proof of Proposition \ref{prop:DS}]
By Lemma \ref{lem:segremobius}, it remains to establish the inequalities \eqref{eq:2} and \eqref{eq:2.5}. First note that 
if $\mathcal{D}^+=\mathcal{S}=0$ in Theorem \ref{thm:main2}, then $\Gamma$ is  a $\C^{2}$-regular simple  closed curve.  Now if $\Gamma$ lies in a hemisphere, then Theorem \ref{thm:osserman} completes the proof. So we may assume that the origin $o$ lies in the interior of $\conv(\Gamma)$. Then, by Proposition \ref{prop:smoothingGamma}, we may assume after a perturbation that $\Gamma$ is $\C^{2,1}$, and thus curve shortening results of Lemma \ref{lem:flow} may be applied to $\Gamma$. Also note  that $\Gamma$ remains simple after this perturbation, since it is arbitrarily small with respect to the $\C^1$-topology.
Now set
 $\Gamma_0:=\Gamma$ and let $\Gamma_t$ be the corresponding flow as in Lemma \ref{lem:flow}. Since, by Lemma \ref{lem:flow}, the number of genuine inflections  and antipodal pairs of points of $\Gamma_t$ do not increase with time (not to mention that  $\Gamma_t$ remains simple), it suffices to show 
 that $\Gamma_t$ will satisfy \eqref{eq:2} and \eqref{eq:2.5} at some future time. 
 
 To take advantage of the last observation  note that, by Lemma \ref{lem:perturb}, we may assume that $\Gamma_0$ does not bisect $\S^2$ after a perturbation. Indeed this perturbation may be confined to an arbitrarily small neighborhood of a point $p$ of $\Gamma_0$ which is neither an inflection nor an element of an antipodal pair of points of $\Gamma$.  More specifically, a small neighborhood $U$ of such a point may be identified with a ball in $\R^2$ via Lemma \ref{lem:beltrami}.  Then an arc of $\Gamma_0$ containing $p$ may be identified with the graph of a function $f\colon [-a,a]\to\R$. Then Lemma \ref{lem:perturb} yields the desired perturbation of the graph of $f$, and consequently of $\Gamma_0$.

 So we may assume that
$\Gamma_t$  converges to a point as $t\to t_1$, by Lemma \ref{lem:flow}.
In particular, $\Gamma_t$ eventually lies in a  hemisphere.   Set 
\begin{equation}\label{eq:t1}
t_0:=\inf\big\{t\in[0,t_1)\mid \Gamma_t \text{ lies in a hemisphere}\big\}.
\end{equation}
Then $\Gamma_{t_0}$ lies in a hemisphere (because according to our definition hemispheres are closed). Furthermore,  $o\in\conv(\Gamma_{t_0})$. If not, then there exists a plane passing through $o$ which is disjoint from $\conv(\Gamma_{t_0})$. Consequently, $\Gamma_{t_0}$ lies in the interior of a hemisphere $H$. This yields that $\Gamma_{t_0-\epsilon}$ must have lain in the interior of $H$ as well, for some $\epsilon>0$, which contradicts \eqref{eq:t1}. Thus $\Gamma_{t_0}$ satisfies \eqref{eq:2} and \eqref{eq:2.5} by Corollary \ref{cor:hemisphere}.
 \end{proof}

\begin{note}
The curve shortening flow has been used by Angenent \cite{angenent:inflection} to prove Segre's theorem in the special case where $\Gamma$ bisects $\S^2$ (the case known as Arnold's ``tennis ball theorem"). In that setting $\Gamma$ flows to a great circle, and the Sturm theory is used to show that perturbations of great circles have at least $4$ inflections, if they contain the origin in their convex hulls. In comparison, our proof here is more elementary (as well as more general), since we avoid  using Sturm theory by an initial perturbation of the curve, which ensures that it will flow into a hemisphere, where Theorem \ref{thm:osserman} may be applied.
\end{note}

\section{Removal of  Singularities}\label{sec:singularity}
Having proved Theorem \ref{thm:main2} in the simple regular case ($\mathcal{D}^+=\mathcal{S}=0$), we now turn to the general case.
To this end we need a desingularization result  developed in this section, which  shows that  the singularities  of a $\C^2$ curve $\Gamma\subset
\S^2$ may be eliminated without increasing the sum
\begin{equation}\label{eq:sig+}
\Sigma^+(\Gamma):=2(\mathcal{D}^++\mathcal{S})+\mathcal{I}.
\end{equation}
Note that as far as proving Theorem \ref{thm:main2} is concerned, we may assume that all multiple points of $\Gamma$ are double points, for if $\gamma^{-1}(p)$ has more than $2$ elements for some $p\in\Gamma$, then $\mathcal{D}^+\geq 3$, which yields that $\Sigma^+(\Gamma)\geq 6$ and so there would be nothing left to prove. A multiple point $p$ is called a \emph{multiple singularity} if at least two of the  points in the preimage $\gamma^{-1}(p)$ are singular points of $\gamma$. If $\Gamma$ has a multiple singularity, then $(\mathcal{D}^++\mathcal{S})\geq 3$ which again yields $\Sigma^+(\Gamma)\geq 6$. Thus, as far as proving theorem \ref{thm:main2} is concerned, we may assume that $\Gamma$ has no multiple singularities. 

\begin{thm}\label{thm:singularity}
Let $M$ be a Riemannian surface of constant nonnegative curvature, $\Gamma\subset M$ be a $\C^{2}$ arc with   finitely many singularities and inflections, and $p$ be a singular point  of  $\Gamma$ which is either a simple point, or a double point, but is not a multiple singularity. Then for every open neighborhood $U$ of $p$ in $M$, which does not contain any other singularities, there is a  $\C^{2}$  arc $\tilde\Gamma$ such that $\tilde\Gamma=\Gamma$ outside $U$, $\tilde\Gamma$ is  $\C^2$-regular inside $U$, and $\Sigma^+(\tilde\Gamma)\leq\Sigma^+(\Gamma)$.
\end{thm}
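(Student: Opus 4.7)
The plan is to carry out a local surgery: replace a small arc of $\gamma$ around the singular parameter $t_0$ (with $\gamma(t_0)=p$) by a $\C^2$-regular arc that agrees with $\gamma$ to second order at the boundary of the surgery window, and then to arrange the surgery so that the count $\Sigma^+$ does not go up.

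\emph{Step 1 (reduction to the plane).} Since $M$ has constant nonnegative curvature and the statement is local, I may assume either $M=\R^2$ directly or, in the spherical case, use Lemma \ref{lem:beltrami} to transport an open hemispherical neighborhood of $p$ to $\R^2$ via the Beltrami map. This diffeomorphism sends geodesics to lines and preserves the sign of geodesic curvature, hence preserves inflections, singularities, double points, and therefore $\Sigma^+$. So it suffices to work in $\R^2$.

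\emph{Step 2 (local structure).} Fix a $\C^2$ parametrization $\gamma\colon I\to\R^2$ of $\Gamma$ with $\gamma(t_0)=p$ and $\gamma'(t_0)=0$. Because $\Gamma$ has only finitely many singularities and inflections, I may choose $\delta>0$ small enough that $\gamma([t_0-\delta,t_0+\delta])\subset U$, the interval $(t_0-\delta,t_0+\delta)\setminus\{t_0\}$ contains no singularity or inflection of $\gamma$, and the geodesic curvature $k$ of $\gamma$ has constant sign $s_-$ on $[t_0-\delta,t_0)$ and constant sign $s_+$ on $(t_0,t_0+\delta]$. In the double-point case (Case B), where $p$ has a second, regular preimage $t_1$, further shrink $\delta$ so that $t_1$ and an entire neighborhood of $t_1$ lie outside $[t_0-\delta,t_0+\delta]$.

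\emph{Step 3 (the surgery and the count).} Let $\phi\colon\R\to[0,1]$ be a $\C^\infty$ bump equal to $1$ near $t_0$, supported in $(t_0-\delta,t_0+\delta)$, with $\phi$ and its first two derivatives vanishing at $t_0\pm\delta$. For small parameters $\epsilon_1,\epsilon_2$ and a unit vector $N\in\R^2$ to be chosen, set
\[
\tilde\gamma(t):=\gamma(t)+\bigl(\epsilon_1+\epsilon_2(t-t_0)\bigr)\phi(t)\,N.
\]
The vanishing conditions on $\phi$ make $\tilde\gamma$ globally $\C^2$ and ensure $\tilde\gamma=\gamma$ outside $[t_0-\delta,t_0+\delta]$. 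Since $\phi(t_0)=1$ and $\phi'(t_0)=0$, one has $\tilde\gamma'(t_0)=\epsilon_2 N$, so taking $\epsilon_2\ne 0$ eliminates the singularity at $t_0$, and for $\epsilon_1,\epsilon_2$ small enough $\tilde\gamma'$ stays nonvanishing on the entire surgery window. A short computation of the leading term of the curvature $\tilde k(t_0)$ gives $\tilde k(t_0)\sim-\det(\gamma''(t_0),N)/\epsilon_2^{2}$, so the sign of $\tilde k$ at $t_0$ can be prescribed by the choice of $N$; picking $N$ so that this sign equals $s_-$, and shrinking $\epsilon_1,\epsilon_2$, one forces $\tilde k$ to change sign at most once on $[t_0-\delta,t_0+\delta]$ (not at all if $s_-=s_+$, once if $s_-\ne s_+$). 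Consequently $\mathcal S$ drops by exactly $1$, $\mathcal I$ grows by at most $1$, and in Case A no new double points appear since the surgery is confined to $U$; in Case B the choice $\epsilon_1=0$ keeps $\tilde\gamma(t_0)=p$, so the existing double point persists without new intersections arising (the regular branch through $p$ lives in a separate parameter region and meets $\tilde\gamma$ transversely at $p$ only). Hence
\[
\Sigma^+(\tilde\Gamma)-\Sigma^+(\Gamma)=2(\Delta_{\mathcal{D}^+}+\Delta_{\mathcal S})+\Delta_{\mathcal I}\le 2(0-1)+1=-1\le 0.
\]

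\emph{Main obstacle.} The delicate part is the curvature analysis in Step 3: verifying that the one-parameter normal perturbation really has the predicted sign pattern of $\tilde k$ in every geometric configuration of the cusp (both the ordinary $\gamma''(t_0)\ne 0$ case and the more degenerate $\gamma''(t_0)=0$ case allowed by $\C^2$-regularity), and that in Case B the smoothed arc does not pick up extra transverse intersections with the regular branch of $\gamma$ through $p$. The hypothesis that $\Gamma$ has only finitely many inflections is essential, as it is what forces $k$ to have a constant sign on each side of $t_0$, allowing $N$ to be chosen so that $\tilde k$ stays on one side of zero throughout the surgery window.
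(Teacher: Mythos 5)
There is a genuine gap, and it sits exactly where you flag your ``main obstacle'': the claim in Step 3 that choosing the sign of $\tilde k(t_0)$ ``forces $\tilde k$ to change sign at most once on $[t_0-\delta,t_0+\delta]$'' is not proved and is in fact false for your construction. Near $t_0$ (but off $t_0$) you have $\tilde\gamma'=\gamma'+\epsilon_2N$ and $\tilde\gamma''=\gamma''$, so $\det(\tilde\gamma',\tilde\gamma'')=\det(\gamma',\gamma'')+\epsilon_2\det(N,\gamma'')$; the first term keeps a fixed sign by the no-inflection hypothesis, but the second term oscillates whenever the direction of $\gamma''$ rotates, and it dominates wherever $\|\gamma'\|$ is small compared to $\|\gamma''\|$. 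A $\C^2$ singular point need not be an ordinary cusp: the hypotheses allow $\gamma''(t_0)=0$ and arcs whose tangent lines do not converge at $p$ and which wind around $p$ infinitely often (e.g.\ $\gamma(t)=e^{(-1+i)/t}$ for $t>0$, a $\C^\infty$ inflection-free log-spiral cusp). For such a curve one checks that $\det(\gamma',\gamma'')$ decays like $e^{-2/t}/t^6$ while $\epsilon_2\det(N,\gamma'')$ oscillates with amplitude of order $\epsilon_2 e^{-1/t}/t^4$, so your perturbed arc acquires \emph{infinitely} many inflections for every $\epsilon_2\neq0$. Your pointwise computation of $\tilde k(t_0)$ controls nothing away from $t_0$, and the finiteness of the original inflection count does not rescue you, because the new sign changes are created by the perturbation itself. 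The same wildness undermines the unproved assertions that $\tilde\gamma'$ stays nonvanishing on the whole window and that, in the double-point case, no new intersections with the regular branch appear.

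This is precisely the difficulty the paper spends Section \ref{sec:singularity} resolving: it replaces the analytic jet-perturbation by a geometric classification of the germ at $p$ as a \emph{double spiral} with a proper side, splits into convex, semiconvex and concave types, proves structural facts about spirals (Lemmas \ref{lem:spiraltangent} and \ref{lem:spiralball}, which show a spiral either has convergent tangents or winds infinitely and hence cannot be cut off naively), and then performs surgery by convolution smoothing, by grafting in a cubic, or by sliding a circle along one arm to truncate the spiraling part --- paying $0$, $1$ or $2$ inflections respectively, which is still enough since removing the singularity buys back $2$ in $\Sigma^+$. Note also that your final accounting claims at most one new inflection, i.e.\ $\Sigma^+$ strictly decreases; the paper only establishes (and only needs) at most two new inflections, and its sharp examples in Section \ref{sec:examples} rely on singularities whose removal genuinely costs two. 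So beyond the unjustified curvature control, the quantitative claim of Step 3 is stronger than what your construction can deliver, and the proposal as written does not prove the theorem.
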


The rest of this section will be devoted to proving the above theorem, which constitutes the most technical part of the paper. This  will unfold in two parts: first we consider the case of simple singularities, and then that of singularities which are double points (but are not multiple singularities). Note that since the theorem is essentially local, we may assume that  $M=\R^2$ via Lemma \ref{lem:beltrami}. Indeed, assuming that the closure of 
$U$ is a sufficiently small metric ball centered at $p$, we may identify it (isometrically) with a ball in $\R^2$, in case the curvature of $M$ is zero; otherwise, we may isometrically map the closure of $U$ to a ball in an open hemisphere of $\S^2$ after a uniform rescaling of the metric, and then map it to $\R^2$ via the Beltrami projection described in Section \ref{subsec:beltrami}. 

\subsection{Simple singularities}\label{subsec:simple}
Here we suppose that $p$ in Theorem \ref{thm:singularity} is a simple point. Then, after replacing $\Gamma$ with a subarc, we may  assume that $\Gamma$ is a simple arc without any inflections and singularities except possibly at $p$. Further we may assume that $\Gamma$ lies in a ball $B$, $p$ coincides with the center $o$ of $B$, and $\Gamma$ intersects $\partial B$ only at its end points. Then $\Gamma$ will be called a \emph{double spiral}. For easy reference, let us record that:

\begin{definition}
A simple arc $\Gamma\subset\R^2$ is a \emph{spiral}, if it is $\C^2$-regular in the complement of  a  boundary point $o$, called the \emph{vortex} of $\Gamma$, has no inflections except possibly at $o$, lies in a ball $B$ centered at  $o$, and touches $\partial B$ only at its other end point.  Further, $\Gamma$ is a \emph{double spiral} if it is  the union of two spirals, called the \emph{arms} of $\Gamma$,  which lie in the same ball $B$, and intersect only at their common vortex $o$ in the center of $B$.
\end{definition}

Note that we do not assume that  a spiral $\Gamma$ has any regularity at its vortex $o$. In particular, the tangent lines of $\Gamma$ may not converge to  a limit at $o$ ($\Gamma$ may rotate endlessly about $o$). Further, $\Gamma$ may have infinite length.
Recall that the closure of each component  of $B-\Gamma$ is called a side $S$ of $\Gamma$. We say  $S$ 
 is   \emph{proper} provided that  there exists no line segment in $S$ passing through $o$.   If   $\Gamma$ has no proper sides, then it must coincide with a line segment in a neighborhood of $o$, which implies that the arms of $\Gamma$ must contain inflections---a contradiction. So \emph{every double spiral has at least one proper side}. 
 
 Double spirals may be categorized into three types as follows. By a \emph{principal normal} vector field of a double spiral $\Gamma$ we mean a unit normal vector field along $\Gamma-\{o\}$ such that the geodesic curvature of $\Gamma$ with respect to $N$ is always positive. Since by assumption $\Gamma-\{o\}$ has no inflections, $N$ always exists.
 If  $N$ only points into one side $S$ of $\Gamma$, then   $\Gamma$ is  called either \emph{convex} or \emph{concave} according to whether or not $S$ is proper. Otherwise (if $N$  changes sides),  we say that $\Gamma$ is  $\emph{semiconvex}$.  Examples of these three types  of double spirals are depicted in Figure \ref{fig:wedges}.
 \begin{figure}[h] 
   \centering
   \includegraphics[height=1.1in]{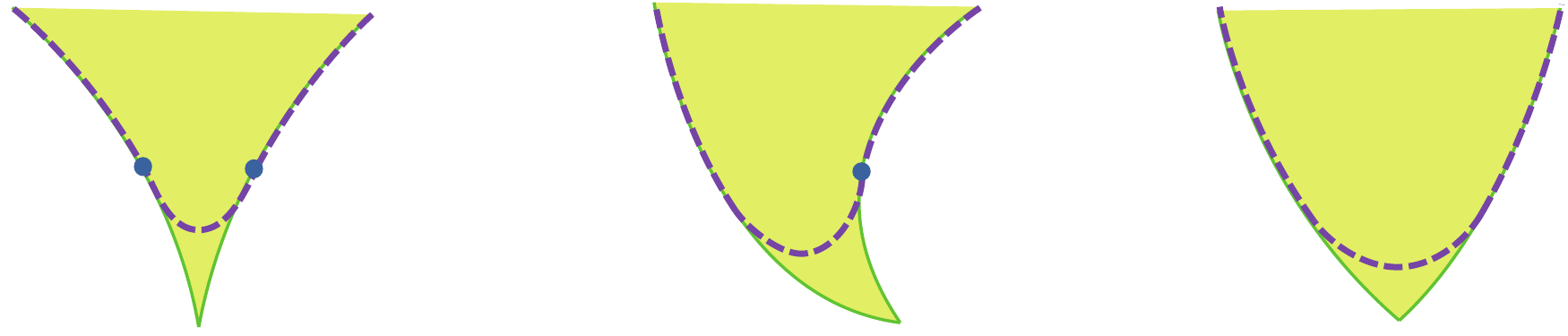} 
   \caption{}
   \label{fig:wedges}
\end{figure}
As this figure suggests, in each case we may remove the singularity which may exist at the vortex $o$, by a perturbation of $\Gamma$ near $o$, to obtain a simple $\C^2$-regular  curve with at most two inflections. This is the content of the next result:
 
 \begin{prop}\label{prop:singularity}
 Let $\Gamma$ be a double spiral with vortex $o$, and $S$ be a proper side of $\Gamma$. Then there exists a simple $\C^2$-regular arc $\w\Gamma\subset S-\{o\}$  which coincides with $\Gamma$ near its end points, and has at most two inflections. Furthermore, if $\Gamma$ is semiconvex or  convex, then we may require that $\tilde\Gamma$ have precisely $1$ or  $0$ inflections respectively.
 \end{prop}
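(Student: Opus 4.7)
My plan is to construct the smoothed arc $\tilde\Gamma$ by retaining the outer portions of the two arms of $\Gamma$ (those near their endpoints on $\partial B$) and replacing their inner portions near the vortex $o$ by a single $\C^2$ connector arc $\alpha \subset S - \{o\}$. Since each arm is $\C^2$-regular and free of inflections away from $o$ by the definition of spiral, I would choose cutoff points $p_1$ and $p_2$, one on each arm, at some convenient small distance from $o$; the tangent line and curvature vector of $\Gamma$ at each $p_i$ are well defined and supply the $\C^2$ boundary data for $\alpha$. The resulting $\tilde\Gamma$ coincides with $\Gamma$ near its endpoints (namely, along the outer parts of the arms), is $\C^2$-regular at $p_i$ thanks to the matching conditions, and lies in $S - \{o\}$ provided $\alpha$ does.

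The case analysis is dictated by the direction of the principal normal $N$ of $\Gamma$ relative to $S$. In the \emph{convex} case, $N$ points into $S$ along both arms, so the curvature vectors at $p_1, p_2$ both point into $S$; I take $\alpha$ to be an arc of consistently oriented curvature into $S$, for instance an arc of a circle tangent to the two arms at $p_1, p_2$ with center in $S$, refined via Lemma \ref{lem:perturb} to exact $\C^2$ matching without introducing inflections, giving zero inflections. In the \emph{semiconvex} case, $N$ points into $S$ along one arm and away from $S$ along the other, so the curvature vector of $\alpha$ must change side exactly once, giving one inflection. In the \emph{concave} case, both curvature vectors point away from $S$ at the endpoints, yet $\alpha$ itself lies in $S$; since $S$ does not contain a line through $o$, the tangent structure of the arms at $o$ forces $\alpha$ to match the arms (curvature away from $S$) near the endpoints and to bend into $S$ somewhere in the middle, producing at most two inflections.

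The principal obstacle is to verify that the connector $\alpha$ genuinely lies in $S - \{o\}$ throughout---neither crossing the arms nor touching $o$---and has exactly the specified inflection count. This is most delicate in the concave case, since the arms may spiral around $o$ so that $S$ has intricate shape near $o$. I would overcome this by exploiting properness of $S$: because no segment through $o$ lies in $S$, the side $S$ contains a nondegenerate open wedge anchored at $o$ along some ray from $o$ into $S$. By choosing $p_1, p_2$ close enough to $o$, the arms at $p_i$ are well approximated by their tangent lines, and I can construct $\alpha$ as a perturbation, via Lemma \ref{lem:perturb}, of a canonical planar curve such as an arc of a conic or a low-degree polynomial spline, fitted inside this wedge. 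The number of sign changes of the signed curvature of $\alpha$ is then controlled by the convex/semiconvex/concave classification and yields the inflection count claimed in each case.
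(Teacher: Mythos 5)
Your high-level strategy---keep the outer parts of the arms and splice in a connector arc $\alpha\subset S-\{o\}$, with the inflection count dictated by the convex/semiconvex/concave trichotomy---is the right shape, and is broadly what the paper does. But the step you yourself flag as the principal obstacle is where the proof actually lives, and your proposed resolution of it is incorrect. You claim that properness of $S$ implies that ``$S$ contains a nondegenerate open wedge anchored at $o$.'' Properness is the statement that \emph{no} line segment through $o$ lies in $S$; it is an \emph{upper} bound on how much of a neighborhood of $o$ the side $S$ can occupy, not a lower bound, so no wedge follows from it. Worse, the opposite situation is exactly the hard case: a spiral arm need not have a limiting tangent direction at $o$, and by Lemma \ref{lem:spiralball} an arm that fails to be $\C^1$ at $o$ meets \emph{every} ray emanating from $o$ infinitely often. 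In that case $S$ contains no ray segment from $o$ at all, hence no wedge, and no matter how close to $o$ you place the cutoff points $p_1,p_2$, any short connector modeled on the tangent lines at $p_i$ must cross the arms. Assuming the wedge is tantamount to assuming $\Gamma$ is $\C^1$-regular at $o$, which is precisely the case that is easy and is \emph{not} the general one.

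The paper's way around this is a circle-sliding argument that you would need some substitute for: fix a radius $r$ and a circle $C$ of radius $r$ inside $S$ tangent to the arm whose principal normal points out of $S$; slide it as close to $o$ as possible along that arm; using Lemma \ref{lem:spiraltangent} one shows the extremal circle must touch the \emph{other} arm, and replacing the inner portion $q'q$ of $\Gamma$ by the arc $\sigma\subset C$ produces a $\C^1$-regular curve whose two junction singularities are then smoothed by Lemmas \ref{lem:semiconvex} and \ref{lem:convexsmoothing}, with the inflection budget $0/1/2$ falling out of whether each junction is convex or semiconvex. (If the circle slides all the way to $o$, the opening angle of $S$ at $o$ is forced to be exactly $\pi$ and $\Gamma$ is $\C^1$ after all.) Two smaller problems with your write-up: in the convex case a single circle cannot in general be tangent to both arms at two prescribed points with prescribed directions, and Lemma \ref{lem:perturb} pushes a graph off itself without creating zeros of $f''$---it is not a $\C^2$-splicing device; the paper instead proves the convex double spiral bounds a convex set (Lemma \ref{lem:Sconvex}) and mollifies the resulting convex graph, which is what actually delivers a $\C^2$ inflection-free arc. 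And in the concave case your ``at most two inflections'' count is asserted rather than derived; it needs the explicit junction-by-junction analysis that the circular-arc surgery makes possible.
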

 
The case of simple singularities in Theorem \ref{thm:singularity} is an immediate consequence of the above proposition. The proof of this  proposition  will be divided into three parts:
 
 \subsubsection{The convex case}\label{subsec:convex}
First we prove Proposition \ref{prop:singularity} in the case where $\Gamma$ is  a convex double spiral. 
In general a simple curve $\Gamma$ in $\R^2$ or $\S^2$ is called \emph{convex} if it lies on the boundary of a convex body (convex set with interior points), and the following lemma confirms that this is indeed the case with convex double spirals.

 \begin{lem}\label{lem:Sconvex}
A  convex double spiral is a convex curve.
 \end{lem}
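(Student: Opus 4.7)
Via Lemma \ref{lem:beltrami} I can reduce the local geometry to $\R^2$, so I work in the plane with $B$ an ordinary Euclidean disk centered at the vortex $o$. The natural candidate for the convex body whose boundary contains $\Gamma$ is the closure $\ol S$ of the proper side of $\Gamma$: it is compact and has nonempty interior, so once I show it is convex, $\Gamma\subset\partial\ol S$ will be a convex curve by definition.

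To establish convexity of $\ol S$, I would apply the planar analog of Lemma \ref{lem:sphereconvex} to the simple closed curve $\partial\ol S=\Gamma\cup\alpha$, where $\alpha$ is the arc of $\partial B$ that meets $S$. At each point of $\partial\ol S$ I need to verify one of the three conditions of that lemma. At every interior point of $\Gamma-\{o\}$, condition (ii) holds immediately, since by the hypothesis that $\Gamma$ is a \emph{convex} double spiral the principal normal $N$ points into $S$ and the geodesic curvature with respect to $N$ is positive. At every interior point of $\alpha$, condition (ii) again holds: $\alpha$ is a circular arc of $\partial B$ whose geodesic curvature with respect to the inward normal of $B$ (which is the normal pointing into $S\subset B$) is the positive constant $1/r$, where $r$ is the radius of $B$. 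At the vortex $o$, condition (iii) is exactly the definition of $S$ being a \emph{proper} side, namely that no line segment through $o$ lies in $S$. Finally, at the two corners where $\Gamma$ meets $\partial B$, I would use condition (i), selecting a line through the corner that locally has $\ol S$ on one side.

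\textbf{Main obstacles.} Two issues require care. The first is the potentially wild behavior of $\Gamma$ at $o$: the arms need not be rectifiable, tangent directions at $o$ may fail to exist, and the curve may wind infinitely often around $o$. This is precisely what makes condition (iii) of Lemma \ref{lem:sphereconvex} indispensable; it is designed to accommodate such non-smooth boundary points, and the properness of $S$ is tailor-made for it. The second, more delicate issue is handling the two corners of $\partial\ol S$ on $\partial B$, where an arm of $\Gamma$ could in principle meet $\partial B$ at a re-entrant angle relative to $S$, spoiling the existence of a local support line. A convenient workaround is to replace $\alpha$ with the straight chord $\chi$ between the endpoints of $\Gamma$ on $\partial B$ and to work with the subregion $\Omega\subset\ol S$ bounded by $\Gamma\cup\chi$; the chord automatically lies in $\ol S$ (otherwise $\chi$ would pass through $o$, violating properness of $S$), and along $\chi$ condition (i) is immediate since $\chi$ is itself a geodesic with $\Omega$ on one side. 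Once condition (i) is verified at the two endpoints of $\chi$ by the standard convex-corner argument, Lemma \ref{lem:sphereconvex} yields that $\Omega$ is convex; therefore $\Gamma\subset\partial\Omega$ lies on the boundary of a convex body, completing the proof.
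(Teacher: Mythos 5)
Your overall strategy coincides with the paper's: take the proper side $S$ into which the principal normal points, note $\Gamma\subset\partial S$, and verify the hypotheses of Lemma \ref{lem:sphereconvex} pointwise along $\partial S$ --- condition (ii) on the interiors of the arms and condition (iii) at the vortex. Where you diverge is in the treatment of $\partial S\cap\partial B$, and that is where the one genuine gap sits. Your worry about re-entrant corners is unfounded: at \emph{any} point $p$ of $\partial S$ lying on $\partial B$, corner or not, the tangent line to $\partial B$ at $p$ meets the closed disk $B$ only at $p$, hence is disjoint from $\inte(S)\subset\inte(B)$, so condition (i) holds outright; equivalently, condition (iii) holds there because no nondegenerate line segment contained in $B$ can have a point of $\partial B$ in its interior. (The paper disposes of everything outside the arm interiors in exactly this way, via condition (iii).) The chord workaround you substitute instead is the flawed step: the assertion that $\chi\subset\ol S$ ``since otherwise $\chi$ would pass through $o$'' does not follow --- a chord that leaves $\ol S$ merely crosses $\Gamma$ somewhere, not necessarily at the vortex --- and without knowing both $\chi\subset\ol S$ and $\chi\cap\Gamma=\partial\chi$ you cannot form the simple closed curve $\Gamma\cup\chi$, cannot conclude $\Gamma\subset\partial\Omega$, and cannot apply Lemma \ref{lem:sphereconvex} to $\Omega$. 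Deleting the workaround and using the tangent-line (or condition (iii)) observation above closes the argument and returns you to the paper's proof.
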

 \begin{proof}
 If $\Gamma$ is a convex double spiral, then it has a proper side $S$ into which the principal normals of 
$\Gamma-\{o\}$ point. Since  $\Gamma\subset\partial S$, we just need to check that $S$ is convex. To this end it suffices in turn to note that each point of $\partial S$ which lies in the interior of an arm of $\Gamma$ satisfies the hypothesis (ii) of Lemma \ref{lem:sphereconvex}, and all other points of $\partial S$ satisfy hypothesis (iii) of that Lemma (which has a direct analogue in $\R^2$ via Lemma \ref{lem:beltrami}). In particular $o$ satisfies hypothesis (iii) since $S$ is proper.
 \end{proof}
 
So a convex  double spiral $\Gamma$   may be represented as the graph of a convex function near its vortex $o$. Now, by Lemma \ref{lem:perturb}, we may perturb $\Gamma$ near $o$ to obtain a convex double spiral $\ol\Gamma$ which lies in the convex side $S$ of $\Gamma$, coincides with $\Gamma$ near its end points, and its vortex $\ol o$ lies in the interior of $S$. Then there is a ball $\ol B$ centered at $\ol o$ which is disjoint from $\Gamma$. To complete the proof of the convex case of Proposition \ref{prop:singularity}, it suffices then to show that we may perturb $\ol\Gamma$ within  $\ol B$ to obtain a $\C^2$-regular curve without inflections, which is precisely the content of the next lemma:

\begin{lem}\label{lem:convexsmoothing}
Let $\Gamma$ be a convex double spiral, and $U\subset\R^2$ be an open neighborhood of $\Gamma$. Then there exists a $\C^2$-regular simple arc $\tilde\Gamma\subset U$ which coincides with $\Gamma$ near its end points and has no inflections.
\end{lem}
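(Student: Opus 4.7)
The plan is to reduce the problem to a one-dimensional smoothing problem by invoking Lemma \ref{lem:Sconvex}, which tells us that a convex double spiral $\Gamma$ bounds a convex region $S\subset\R^2$. After a rotation placing a support line of $S$ at the vortex $o$ horizontal with $S$ above it, a small arc of $\Gamma$ around $o$ coincides with the graph of a convex function $f\colon[-a,a]\to\R$ with $f(0)=0$, and $f$ is $\C^2$ with $f''>0$ on $[-a,0)\cup(0,a]$ (the only possible singular behavior being at $0$). It then suffices to replace $f$ on a small interval $[-\delta,\delta]$ by a $\C^2$ strictly convex function $\tilde f$ that matches $f$ up to second order at $\pm\delta$: the resulting curve $\tilde\Gamma$ is $\C^2$-regular, strictly convex, and hence inflection-free, and it lies in the prescribed neighborhood $U$ provided $\delta$ is chosen sufficiently small.

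\textbf{Prescribed-curvature construction.} The strategy is to specify $\tilde f''$ on $[-\delta,\delta]$ as a smooth strictly positive function $h$, and then recover $\tilde f$ by integrating twice from the initial data $\tilde f(-\delta)=f(-\delta)$, $\tilde f'(-\delta)=f'(-\delta)$. A $\C^2$ join at the right endpoint requires
\begin{align*}
h(\pm\delta) &= f''(\pm\delta), \\
\int_{-\delta}^{\delta} h(t)\, dt &= f'(\delta)-f'(-\delta), \\
\int_{-\delta}^{\delta} (\delta-t)\, h(t)\, dt &= f(\delta)-f(-\delta)-2\delta\, f'(-\delta).
\end{align*}
Strict convexity of $f$ on $[-a,a]$ forces both right-hand sides to be strictly positive, and the tangent-line bound at $-\delta$ shows the second is strictly less than $2\delta$ times the first; these are precisely the necessary compatibility conditions for a positive $h$ with the prescribed integrals to exist.

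\textbf{Existence of $h$ and conclusion.} To produce $h$ concretely, I would fix any smooth positive base function $h_0$ on $[-\delta,\delta]$ with $h_0(\pm\delta)=f''(\pm\delta)$, and then add a two-parameter perturbation $\alpha\phi_- + \beta\phi_+$, where $\phi_\pm$ are nonnegative smooth bumps supported in disjoint subintervals near $\mp\delta$ and vanishing at $\pm\delta$ (so that the endpoint values $h(\pm\delta)$ are preserved). The two integral identities become a $2\times 2$ linear system in $(\alpha,\beta)$ whose coefficient matrix is nondegenerate (it is triangular-dominant in $\delta$ because $\int(\delta-t)\phi_-\approx 2\delta$ while $\int(\delta-t)\phi_+\approx 0$), and its unique solution is bounded in terms of the data. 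Scaling $h_0$ upward keeps $h=h_0+\alpha\phi_-+\beta\phi_+$ strictly positive throughout; integrating twice then yields a $\C^2$ strictly convex $\tilde f$ that joins $f$ smoothly at $\pm\delta$. The corresponding $\tilde\Gamma=\graph(\tilde f)$, glued to $\Gamma$ outside $[-\delta,\delta]$, is the desired curve.

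\textbf{Main obstacle.} The principal subtlety is the existence step for $h$: one has to verify that the integral compatibility inequalities forced by convexity of $f$ are precisely what guarantees solvability of the two-parameter system while maintaining positivity. This is the single point in the argument where the convexity hypothesis on $\Gamma$ is used in an essential way—it enters not through a geometric picture but through these sharp one-sided integral bounds on $f''$—and it is what permits the reduction of a potentially wild singularity at $o$ to a standard smooth interpolation.
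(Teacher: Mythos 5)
Your reduction is the same as the paper's (via Lemma \ref{lem:Sconvex}, represent $\Gamma$ near the vortex as the graph of a convex function $f$ that is $\C^2$ with $f''>0$ away from $0$), but from there you take a genuinely different route. The paper mollifies: it convolves $f$ with a bump to get $f_\lambda$ with $f_\lambda''>0$, then blends $f_\lambda$ with $f$ by a cutoff $\phi$, checking that on the transition region $\tilde f_\lambda''\to f''>0$ in the sup norm. You instead cut out $[-\delta,\delta]$ entirely and solve a prescribed-curvature interpolation problem: find $h>0$ with the two endpoint values and two moment conditions, and integrate twice. Your identification of the matching conditions is correct, and so are the compatibility inequalities $R_1:=f'(\delta)-f'(-\delta)>0$ and $0<R_2:=f(\delta)-f(-\delta)-2\delta f'(-\delta)<2\delta R_1$ (the upper bound is the statement that $(-\delta,f(-\delta))$ lies strictly above the tangent line at $+\delta$, not at $-\delta$ as you wrote). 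Your approach has the merit of isolating exactly how convexity enters, and of producing an exact second-order match at $\pm\delta$ rather than a blended transition; the paper's approach is shorter because convolution preserves strict convexity for free.

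There is, however, one step that fails as written: the claim that \emph{scaling $h_0$ upward} keeps $h=h_0+\alpha\phi_-+\beta\phi_+$ positive. The total mass $\int h=R_1$ is fixed by the data, so if $\int h_0>R_1$ the linear system forces $\alpha\int\phi_-+\beta\int\phi_+<0$; enlarging $h_0$ makes the correction coefficients more negative, and since $\phi_\pm$ must be concentrated near the endpoints for your matrix to be nondegenerate, the negative correction on the support of a bump is of the same order as $h_0$ there (a mass deficit of order $\|h_0\|_1$ squeezed into an interval of length $\ell\ll 2\delta$ has amplitude $\gtrsim\|h_0\|_1/\ell$, which dominates $\min h_0$). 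The fix goes in the opposite direction: choose $h_0$ \emph{small in the integral sense} --- e.g.\ positive but spiking to the required values $f''(\pm\delta)$ only on tiny neighborhoods of $\pm\delta$ --- so that $A:=R_1-\int h_0>0$ and $0<R_2-\int(\delta-t)h_0<2\delta A$ still hold; then the solution of your triangular-dominant system has $\alpha,\beta>0$ and positivity of $h$ is automatic. (Equivalently: a positive measure of mass $R_1$ and barycenter $\delta-R_2/R_1\in(-\delta,\delta)$ exists, e.g.\ a bump at the barycenter, and the endpoint values are then fixed by adding spikes of negligible mass.) With that repair, and the observation that both $f$ and $\tilde f$ are trapped between the chord and the tangent lines at $\pm\delta$ --- which gives $\tilde\Gamma\subset U$ and simplicity for small $\delta$ --- your argument is complete.
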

\begin{proof}
After a rigid motion, we may identify a neighborhood of the vortex $o$ of $\Gamma$ with the graph of a convex function $f\colon[-a,a]\to \R$. The desired curve $\tilde\Gamma$ is then obtained by replacing the graph of $f$ with that of the function $\tilde f\colon [-a,a]\to \R$ constructed as follows.

As in the proof of Lemma \ref{lem:smoothingk}, let $\theta_\lambda\colon\R\to\R$ be a $\C^\infty$ nonnegative function such that  $\int_\R\theta_\lambda=1$, and $\theta_\lambda\equiv 0$ outside $(-\lambda,\lambda)$. Next let $f_\lambda\colon [-a+\lambda, a-\lambda]\to\R$ be the convolution of $ f$ with $\theta_\lambda$, i.e., set
$$
f_\lambda(x):=\int_{-a}^a f(x-y)\theta_\lambda(y)dy.
$$
Then, $f''_\lambda(x)=\int_{-a}^a f''(x-y)\theta_\lambda(y)dy$. So, since $f''>0$ almost everywhere, it follows that $f''_\lambda>0$. Now let $\phi\colon[-a,a]\to\R$ be a $\C^\infty$  nonnegative function such that $\phi\equiv 1$ on $[-a/4,a/4]$,  and $\phi\equiv 0$ on $[-a,-a/2]\cup [a/2,a]$. Assuming $\lambda<a/2$, and extending $f_\lambda$ to $[-a,a]$ arbitrarily, we may define $\tilde f_\lambda\colon [-a,a]\to\R$ by
$$
\tilde f_\lambda (x):=\big(1-\phi(x)\big)  f(x)+\phi(x)f_\lambda(x).
$$
We claim that, for sufficiently small $\lambda$, $\tilde f_\lambda$ is the desired function. 

To check this claim note that
as $\lambda\to 0$, $f_\lambda\to f$, and thus $\tilde f_\lambda\to  f$. So, choosing $\lambda$ sufficiently small, we may assume that $\|\tilde f_\lambda-f\|$ is small enough so that $\tilde\Gamma_\lambda\subset U$, where $\tilde\Gamma_\lambda$ is the curve which is obtained from $\Gamma$ by replacing the graph of $f$ with that of $\tilde f_\lambda$. Further note that $\tilde f_\lambda''>0$ on $[-a,-a/2]\cup [-a/4,a/4]\cup [a/2,a]$, since $f''$, $f_\lambda''>0$ on this region. Next we show that for $\lambda$ sufficiently small, $\tilde f_\lambda''>0$ on $A:=[-a/2,-a/4]\cup [a/4,a/2]$ as well, and thus $\tilde\Gamma_\lambda$ is free of inflections, which would complete the proof. To see this note that on $A$
$$
\tilde f_\lambda''= f''+\phi'' (f_\lambda- f)+2\phi' (f_\lambda- f)'+\phi\, (f_\lambda- f)''.
$$
Further, $f_\lambda\to f$ with respect to the $\C^2$ norm on $A$. Thus, since $\phi$ is independent of $\lambda$, $\tilde f_\lambda''\to  f''$ on $A$. So $\tilde f_\lambda''>0$ on $A$ for small $\lambda$, because $ f''>0$ on $A$.
\end{proof}

\begin{note}
Lemma \ref{lem:convexsmoothing} shows that an isolated  inflection which lies in an open neighborhood where the curvature does not change sign may be removed by a local  $\C^2$-small perturbation.
\end{note}
  
 \subsubsection{The $\C^1$-regular case}
Next we show that Proposition \ref{prop:singularity} holds when $\Gamma$ admits a regular $\C^1$  parametrization, or equivalently, it has a well defined tangent line which varies continuously. To this end first note that:

\begin{lem}\label{lem:convexorsemi}
If $\Gamma$ is a $\C^1$-regular double spiral, then $\Gamma$ is either convex or semiconvex.
\end{lem}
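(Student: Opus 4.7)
My plan is to exploit the tangent line $L$ of $\Gamma$ at $o$, which exists by $\C^1$-regularity, together with the maximum principle (Lemma~\ref{lem:mp}), to determine on which side of $\Gamma$ the principal normal points on each arm near $o$, and then to read off whether $\Gamma$ is convex or semiconvex from the position of the two arms relative to $L$.

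After choosing coordinates with $L=\{y=0\}$ and $o=(0,0)$, the inverse function theorem lets me write each arm as a graph $y=f_i(x)$ near $o$ (arm~$1$ on $(0,a]$, arm~$2$ on $[-a,0)$), with $f_i(0^\pm)=f_i'(0^\pm)=0$. Since each arm is $\C^2$-regular with no inflections on its interior, $f_i''$ has constant sign there; integrating under the boundary conditions forces $f_i'$, and then $f_i$, to have constant sign (matching the sign of $f_i''$), so each arm lies strictly in one of the two half-planes $H^\pm=\{\pm y>0\}$ bounded by $L$. The principal normal $N$ (defined on the arm interior by $k>0$) then extends continuously to $o$ as a vector perpendicular to $L$. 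Applying Lemma~\ref{lem:mp} to the arm, viewed as a $\C^1$-regular simple arc with endpoint $o$ tangent to the geodesic $L$ at $o$ and lying on one side of $L$, pins $N(o)$ to point into the half-plane containing the arm: the opposite choice would, by the lemma, produce a point on the arm with $k<0$, contradicting the construction of $N$.

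With this in hand, the proof splits into two cases. If both arms lie in the same half-plane $H$, then both principal normals at $o$ point into $H$, and near each arm $N$ points into the common side $S$ of $\Gamma$ lying ``above'' both graphs. A short segment of $L$ through $o$ sits in the opposite side $S'$ (since $L$ lies below both arms), so $S'$ is non-proper; conversely $S$ is proper, since the only line through $o$ that can avoid crossing $\Gamma$ on both sides of $o$ is the tangent $L$ (any line transverse to $L$ immediately enters $S'$ on one side of $o$), and $L$ itself lies in $S'$. Hence $\Gamma$ is convex. If, on the other hand, the two arms lie in opposite half-planes of $L$, then the principal normals at $o$ point into opposite half-planes and hence into opposite sides of $\Gamma$, so $N$ switches sides and $\Gamma$ is semiconvex. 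The main technical wrinkle is the continuous extension of $N$ to $o$ and the correct invocation of Lemma~\ref{lem:mp}; the properness check in the convex case is the only other subtlety, and it reduces to noting that the unique candidate line for staying inside a single side, namely $L$, sits in the wrong side.
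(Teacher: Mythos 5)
Your proof is correct and rests on the same mechanism as the paper's own argument: the continuous extension of the principal normal to the vortex (using $\C^1$-regularity) combined with the maximum principle, Lemma~\ref{lem:mp}, applied with the tangent line at $o$ as the geodesic. The paper phrases this as a one-step contradiction --- if $N$ pointed into a non-proper side, the segment through $o$ witnessing non-properness would lie on the same side as $N(o)$ and Lemma~\ref{lem:mp} would force $k<0$ somewhere --- whereas you unfold the identical fact into an explicit dichotomy on whether the two arms lie in the same or in opposite half-planes of the tangent line; your additional details (the graph representation, the sign analysis of $f_i''$, and the check that any segment of a closed side through $o$ must be tangent) are all sound.
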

\begin{proof}
We may suppose that the principal normal vector field $N$ of $\Gamma-\{o\}$ always points to one side of $\Gamma$ (otherwise $\Gamma$ is semiconvex and there is nothing to prove). Then $N$ may be continuously extended to $o$ since $\Gamma$ is $\C^1$. Let $S$ be the side of $\Gamma$ into which $N$ points. We need to show that $S$ is proper, i.e., there exists no line segment $L$ in $S$ which passes through $o$. Suppose otherwise. Then  $L$ lies on one side of $\Gamma$, and $N(o)$ points into the same side. So, by Lemma \ref{lem:mp}, the geodesic curvature of $\Gamma-\{o\}$ with respect to $N$ must be negative, which is not possible since $N$ is the principal normal.
\end{proof}

 Now, since the convex case has already been treated,  we may  assume that $\Gamma$ is semiconvex. In this case we once again use Lemma \ref{lem:perturb} to perturb $\Gamma$ inside the given proper side $S$, so that the vortex enters the interior of $S$, just as we had done in Section \ref{subsec:convex}. Then all that remains is to show:

\begin{lem}\label{lem:semiconvex}
Let $\Gamma$ be a $\C^1$-regular semiconvex double spiral, and $U\subset\R^2$ be an open neighborhood of $\Gamma$. Then there exists a $\C^2$-regular simple arc $\tilde\Gamma\subset U$ which coincides with $\Gamma$ near its end points, and has precisely one inflection.
\end{lem}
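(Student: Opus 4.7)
The plan is to work locally at the vortex and produce an explicit $\C^2$ smoothing of $\Gamma$ whose second derivative (as a graph over the tangent line at $o$) vanishes at exactly one point.

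By Lemma \ref{lem:beltrami} we may assume $M=\R^2$. Since $\Gamma$ is $\C^1$-regular, it has a well-defined tangent line at the vortex $o$, so after a rigid motion we may take $o$ to be the origin with this tangent line horizontal, and a sufficiently small neighborhood of $o$ in $\Gamma$ is the graph of a $\C^1$ function $f\colon[-a,a]\to\R$ with $f(0)=f'(0)=0$. Because $\Gamma-\{o\}$ has no inflections, $f$ is $\C^2$ on $[-a,0)\cup(0,a]$ and $f''$ has fixed sign on each piece; the semiconvex hypothesis, which says the principal normal switches sides as we cross $o$, forces the two signs to be opposite, so WLOG $f''>0$ on $[-a,0)$ and $f''<0$ on $(0,a]$. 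Consequently $f'$ is strictly increasing on $[-a,0]$ and strictly decreasing on $[0,a]$, with $f'(0)=0$ its unique strict maximum.

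The construction imitates the proof of Lemma \ref{lem:convexsmoothing}. Let $\theta_\lambda$ be the mollifier from Lemma \ref{lem:smoothingk} and $\phi\colon[-a,a]\to[0,1]$ a $\C^\infty$ cutoff supported in $(-a/2,a/2)$ and equal to $1$ on $[-a/4,a/4]$. Setting $f_\lambda:=f\ast\theta_\lambda$ and
\[
\tilde f_\lambda(x):=\bigl(1-\phi(x)\bigr)f(x)+\phi(x)f_\lambda(x),
\]
the same computation as in Lemma \ref{lem:convexsmoothing} shows $\tilde f_\lambda\in\C^2$, $\tilde f_\lambda\to f$ in $\C^0$ as $\lambda\to 0$, and $\tilde f_\lambda''\to f''$ uniformly on the blending strips $[-a/2,-a/4]\cup[a/4,a/2]$ (since $f$ is already $\C^2$ there). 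Hence for $\lambda$ small, $\tilde f_\lambda''>0$ on $[-a,-a/4]$ and $\tilde f_\lambda''<0$ on $[a/4,a]$, so every zero of $\tilde f_\lambda''$ lies in $[-a/4,a/4]$. After an additional arbitrarily $\C^2$-small perturbation (a generic choice of $\lambda$, or an application of Lemma \ref{lem:perturb}) the zeros in $[-a/4,a/4]$ may be assumed simple; because $\tilde f_\lambda''$ enters this interval positive and exits negative, their total number is odd, say $2k+1$.

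The main obstacle is forcing $k=0$. When $k\ge 1$ I would remove a pair of extraneous inflections locally: pick two consecutive zeros $x_i<x_{i+1}$ of $\tilde f_\lambda''$ on which $\tilde f_\lambda''$ is positive on the outer sides and negative in between, so that the graph of $\tilde f_\lambda$ on a slightly enlarged interval $[x_i-\epsilon,x_{i+1}+\epsilon]$ is convex with a small concave ``dip'' in the middle. A $\C^2$-small graph perturbation supported in $(x_i-\epsilon,x_{i+1}+\epsilon)$, constructed by the same mollification-plus-cutoff recipe used above (now applied to fill the dip from below rather than to smooth the vortex), replaces the dipped piece with a strictly convex $\C^2$ arc matching $\tilde f_\lambda$ to second order at the endpoints, giving no inflections on that enlarged interval and two fewer zeros of the second derivative overall. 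Iterating $k$ times, and keeping every step $\C^0$-small, produces a $\C^2$ graph $\tilde\Gamma\subset U$ that coincides with $\Gamma$ near its end points and whose second derivative vanishes at exactly one point. Since $\tilde\Gamma$ is a graph over the tangent line it is automatically simple, completing the proof.
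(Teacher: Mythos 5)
Your reduction to a graph $y=f(x)$ with $f(0)=f'(0)=0$ and $f''$ of opposite signs on the two sides of $0$ is correct and matches the paper's setup, but the core of your argument has a genuine gap. First, the mollification step really can create extra inflections: on $(-\lambda,\lambda)$ you have no control over $f_\lambda''=f'\ast\theta_\lambda'$, and since $f'$ is merely continuous and unimodal (it may have a sharp cusp-like peak at $0$, e.g.\ $f(x)=-\sign(x)|x|^{3/2}$) while $\theta_\lambda$ is an arbitrary nonnegative mollifier (possibly bimodal), $f_\lambda'$ can acquire several local extrema near $0$, so $k\geq 1$ does occur. You recognize this, but the proposed repair --- replacing a ``convex--dip--convex'' piece by a strictly convex arc via a perturbation supported in $(x_i-\epsilon,x_{i+1}+\epsilon)$ --- is not always possible, and this is the real gap. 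If $\tilde h$ is strictly convex on $[u,v]$ and coincides with $h$ near $u$ and $v$, then necessarily $h'(v)>h'(u)$ and the secant slope $\frac{h(v)-h(u)}{v-u}$ must lie strictly between $h'(u)$ and $h'(v)$; a dip whose concave portion drops $h'$ by more than the flanking convex portions raise it violates the first condition, and then no such local convexification exists. Making this work would require quantitative control (enlarging the support into the flanks until the deficit is compensated, and verifying the secant condition), none of which is supplied; likewise, ``a generic choice of $\lambda$'' making the zeros of $\tilde f_\lambda''$ simple is asserted rather than proved, and Lemma \ref{lem:perturb} does not do this.

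The paper avoids both difficulties with a different, essentially combinatorial construction: it replaces $f$ near the vortex by the explicit cubic $g(x)=\lambda(x^3+x)$, taking $\ol f=g$ on the interval $[a',b']$ between the first crossings of $g$ with $f$ on either side of $0$. Because $\ol f$ is then the upper envelope of two convex functions on one side of $0$ and the lower envelope of two concave functions on the other, it is automatically convex on one side and concave on the other, so its second derivative vanishes only at $0$ --- exactly one inflection by construction, with nothing to cancel afterwards. The only remaining defects are the two corner points $a'$, $b'$, where the curve is locally convex (resp.\ concave), and these are smoothed at zero cost in inflections by Lemma \ref{lem:convexsmoothing}. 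If you want to keep a convolution-based argument, you would need to invoke a variation-diminishing kernel (e.g.\ a log-concave $\theta_\lambda$, so that $df'\ast\theta_\lambda$ has at most one sign change), which would eliminate the extra inflections at the source rather than trying to cancel them in pairs.
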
 
\begin{proof}
Much like the beginning of the proof of Lemma \ref{lem:convexsmoothing}, we may identify a neighborhood of the vortex $o$ of $\Gamma$ with the graph of a function $f\colon[a,b]\to \R$. Replacing the graph of $f$ with that of an appropriate function $\tilde f\colon [a,b]\to \R$ then yields the desired curve $\tilde\Gamma$. We may assume, after a rigid motion, that $f(0)=f'(0)=0$, $f''>0$ on $(0,b]$, and $f''<0$ on $[a,0)$. Then we construct $\tilde f$ as follows.

 Consider the cubic function $g(x):=\lambda (x^3+x)$, where $\lambda>0$. Choosing $\lambda$ sufficiently small, we can make sure that $g(b)<f(b)$, and $g(a)>f(a)$. On the other hand, since $g(0)=f(0)$, while $g'(0)=\lambda>0=f'(0)$, it follows that $g>f$ on some interval $(0,b')$. Let $0<b'<b$ be the largest number so that $g>f$ on $(0,b')$. Similarly, there exists $a<a'<0$ such that $(a',0)$ is the largest open interval ending at $0$ where $g<f$. See Figure \ref{fig:graphs}. 
 
 \begin{figure}[h] 
   \centering
   \begin{overpic}[height=1.5in]{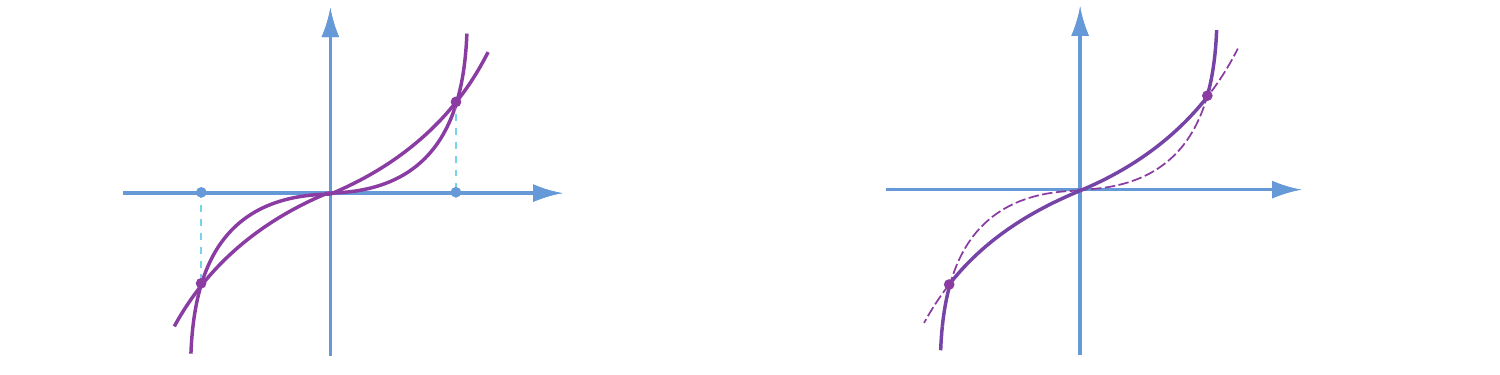} 
 \put(30,10){$b'$}
  \put(11,10){$a'$}
    \put(33,20){$g$}
     \put(29,22){$f$}
      \put(78,22){$\ol f$}
    \end{overpic}
   \caption{}
   \label{fig:graphs}
\end{figure}

 Now define $\ol f\colon [a,b]\to\R$ by setting $\ol f:=g$ on $[a',b']$ and $\ol f:=f$ otherwise. Then $\ol f$ is  convex on $[0,b]$ and   concave on $[a,0]$, because the maximum (resp. minimum) of two convex (resp. concave) functions is convex (resp. concave). Note that $\ol f$ is $\C^2$ everywhere except near $a'$ and $b'$, and the only place where $\ol f''$ vanishes is at $0$. Now we may smooth $\ol f$ near  $a'$ and $b'$ by invoking Lemma \ref{lem:convexsmoothing} in small neighborhoods of these points to  obtain the desired function $\tilde f$. Finally note that, after replacing $\Gamma$ with a subarc, we may assume that $U$ is the interior of an open ball centered at $o$, in which case it is clear that the graph of $\tilde f$ will lie within $U$ as well.
\end{proof}

 \subsubsection{Other cases} It remains now to prove Proposition \ref{prop:singularity} in the case where $\Gamma$ is neither convex nor $\C^1$-regular. To this end first note that

 \begin{lem}\label{lem:spiraltangent}
Let $\Gamma$ be a spiral with vortex $o$, $p$ be an interior point of $\Gamma$, and $\ell$ be the tangent line of $\Gamma$ at $p$. Then the subarc $po$ of $\Gamma$ intersects $\ell$ only at $p$.
 \end{lem}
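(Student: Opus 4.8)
The plan is to argue entirely in the plane. Since the assertion is local to a neighborhood of the arm, I would first invoke Lemma~\ref{lem:beltrami} to assume $\Gamma\subset\R^2$, place $p$ at the origin with $\ell$ the $x$-axis, and take the unit tangent of $\Gamma$ at $p$ pointing in the $+x$ direction. Because $p$ is an interior point it is regular and inflection-free, so $\kappa(p)\neq 0$; writing $h(s)$ for the $y$-coordinate (the signed distance to $\ell$) of a unit-speed parametrization of the subarc $po$ with $h(0)=0$, we have $h'(0)=0$ and $h''(0)=\pm\kappa(p)\neq 0$. After possibly reflecting I may assume $h''(0)>0$, so that $po$ enters the upper half-plane and $h>0$ on a punctured neighborhood of $0$. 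The claim is then equivalent to the statement that $h(s)\neq 0$ for every $s$ in the open inner arc up to and including the vortex, i.e.\ that $\ell$ is a support line of $po$ meeting it only at $p$.

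Next I would use that $po$ has no interior inflections, so its turning angle $\psi(s)$ (measured from the direction of $\ell$, with $\psi(0)=0$) is strictly monotone; by the sign of $h''(0)$ I may take $\psi$ increasing. Since $h'(s)=\sin\psi(s)$, the function $h$ is strictly monotone on each interval on which $\psi$ stays between consecutive integer multiples of $\pi$, and so vanishes at most once on each such interval. Suppose, toward a contradiction, that $h$ vanishes somewhere on the inner arc, and let $q=\gamma(s_1)$ be its first zero after $p$. Then $h>0$ on the open arc $pq$, the curve meets $\ell$ transversally at $q$ (as $h'(s_1)=\sin\psi(s_1)\neq 0$), and from $h(s_1)=\int_0^{s_1}\sin\psi(s)\,ds$ one sees that the total turning must satisfy $\psi(s_1)\in(\pi,2\pi)$: if $pq$ turned by at most $\pi$ then $\sin\psi\ge 0$ throughout, forcing $h(s_1)>0$.

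The delicate point—and the step I expect to be the main obstacle—is to convert this into a contradiction, because once the spiral winds by more than $2\pi$ the elementary sign analysis of $h$ is inconclusive; indeed an inflection-free arc tangent to $\ell$ at an endpoint can \emph{a priori} return to $\ell$, and what rules this out must be the global spiral structure itself. To exploit it I would pass to the intersection point $q^\ast$ of $po\cap\ell$ that is \emph{closest to $o$} along the arc (treating the case where such intersections accumulate at $o$, which forces $o\in\ell$, separately, e.g.\ via Lemma~\ref{lem:lune}). Then the subarc $q^\ast o$ lies on one side of $\ell$, meets $\ell$ only at $q^\ast$, and converges to the vortex while staying inside the ball $B$ that $\Gamma$ meets only at its far endpoint. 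I would then apply the maximum principle (Lemma~\ref{lem:mp}), together with the convexity criterion of Lemma~\ref{lem:sphereconvex} on the region cut off between $q^\ast o$ and $\ell$: the absence of inflections governs the curved part of the boundary, while the convergence of $q^\ast o$ to the single interior point $o$ is what controls the otherwise unbounded winding. Making this damping-by-convergence mechanism precise—so as to rule out any return to $\ell$ uniformly over all possible winding behaviors of the arm—is the crux of the argument; the reductions in the first two paragraphs are routine.
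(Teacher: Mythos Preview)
Your opening reductions and the turning-angle computation showing that the first return point $q$ must lie to the left of $p$ (equivalently $\psi(s_1)\in(\pi,2\pi)$) are correct, and this step matches the paper exactly: the paper phrases it as ``$q$ lies to the left of $p$,'' proved by mapping the arc $pq$ to $\S^2$ via the inverse Beltrami map and invoking Lemma~\ref{lem:hemisphere}.

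The gap is in what follows. You try to extract a contradiction from the \emph{inner} subarc $q^\ast o$ and its convergence to the vortex, but this cannot work: an inflection-free planar arc that is tangent to $\ell$ at one endpoint may perfectly well cross $\ell$ later and then spiral inward to any point you like, so neither Lemma~\ref{lem:mp} nor Lemma~\ref{lem:sphereconvex} applied to the region between $q^\ast o$ and $\ell$ will yield an obstruction. Convergence to a single point imposes no sign condition on curvature and no barrier to recrossing $\ell$. You implicitly acknowledge this by calling the step ``the crux'' and leaving it unresolved; in fact the mechanism you are looking for does not exist on the inner arc alone.

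What you are missing is the role of the \emph{outer} subarc $o'p$, where $o'\in\partial B$ is the far endpoint of the spiral. The paper closes up the arc $pq$ with the straight segment $\overline{qp}\subset\ell$ to form a simple closed curve $C$, and observes that a point $r$ on $\Gamma$ just before $p$ lies in the interior of the Jordan region bounded by $C$. Since $o'\in\partial B$ lies outside this region, the simple arc $o'r$ must cross $C$, and by simplicity of $\Gamma$ it can do so only along the segment $\overline{qp}$; the last such crossing $s$ then gives a subarc $sp\subset\Gamma$ lying on one side of $\ell$, tangent to $\ell$ at $p$, with $s$ to the left of $p$, and Lemma~\ref{lem:hemisphere} forces an interior inflection---the contradiction. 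The entire argument hinges on the trapping of the outer arm inside $C$, which your proposal never invokes.
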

 \begin{proof}
 Suppose, towards a contradiction, that there exists a point $q$ on $po$, other than $p$ itself which lies on $\ell$. Since $\Gamma$ has no inflections at $p$ the arc $po$ is disjoint from $\ell$ near $p$ (by Taylor's theorem). Thus we may assume that $q$ is the ``first" point of $po$ after $p$ which intersects $\ell$, assuming that $\Gamma$ is oriented so that $o$ is the ``end point".
  After a rigid motion we may assume that $\ell$ coincides with the $x$ axis,  the  tangent vector of $\Gamma$ at $p$ (which is consistent with the orientation of $\Gamma$) is parallel to the positive direction of the $x$-axis, and $\Gamma$ lies above the $x$-axis near $p$, see Figure \ref{fig:whale}.
  \begin{figure}[h] 
   \centering
   \begin{overpic}[height=0.9in]{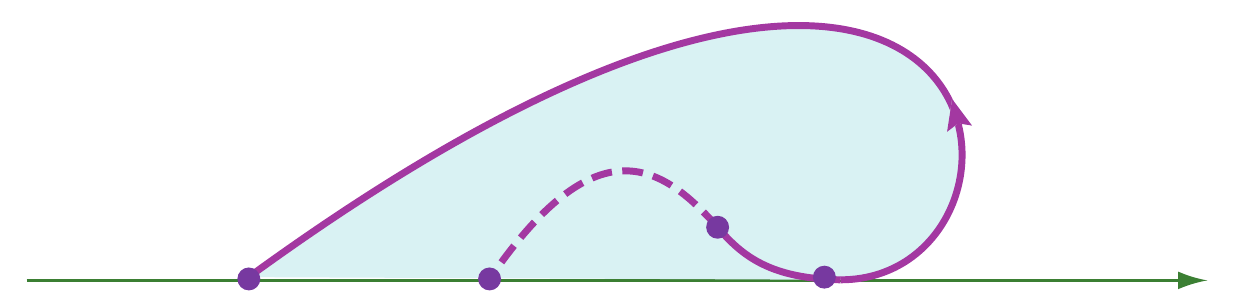}
   \put(65,-2){$p$}
    \put(18,-2){$q$}
    \put(57,8){$r$}
    \put(38,-2){$s$}
   \end{overpic}
   \caption{}\label{fig:whale}
\end{figure}
 Now it follows from Lemma \ref{lem:hemisphere} that $q$ lies to the ``left" of $p$ (with respect to the positive directions of the $x$-axis). Indeed, if $q$ were to lie on the ``right" of $p$, then the inverse of the Beltrami projection applied to the subarc $pq$ of $\Gamma$ would  yield a spherical curve with at least one inflection in its interior according to Lemma \ref{lem:hemisphere}---a contradiction.
 So connecting the end points of the subarc $pq$ with a straight line yields a simple closed curve $C$ which is $\C^1$-regular in the complement of $q$.  Now note  that  if a point $r$ of $\Gamma$ precedes $p$ and is sufficiently close to $p$, then $r$ lies in the interior of the region bounded by $C$, because by assumption $\Gamma$  lies above the $x$-axis near $p$. So the arc $o'r$ of $\Gamma$, where $o'$ is the initial point of $\Gamma$, must intersect $C$ at some point, because $o'$ lies on $\partial B$, the boundary of the ball associated to the spiral $\Gamma$, which lies outside $C$. Since $\Gamma$ is simple, $o'r$ can intersect $C$ only in the interior of the line segment $pq$. Let $s$ be the last point of $\Gamma$ prior to  $r$ which lies on $pq$. Then $s$ lies to the left of $p$, and again it follows from Lemma \ref{lem:hemisphere} that $sp$ contains an inflection, which is the contradiction we were seeking.
  \end{proof}

 The last lemma  yields:
 
 \begin{lem}\label{lem:spiralball}
 Let $\Gamma$ be a spiral with vortex $o$. Then either the tangent lines of $\Gamma$ converge to a line at $o$, or else $\Gamma$ intersects every ray emanating from $o$ infinitely often. In particular, if the interior of $\Gamma$ is disjoint from some ray emanating from $o$, then $\Gamma$ is $\C^1$-regular.
 \end{lem}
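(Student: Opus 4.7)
The plan is to work in polar coordinates centered at the vortex $o$. Parameterize $\Gamma$ by arc length from its non-vortex endpoint as $\gamma\cn[0,L)\to B$, so $\gamma(s)\to o$ as $s\to L$. Let $\alpha(s)$ be a continuous lift of the argument of $\gamma'(s)$, and $(r(s),\theta(s))$ the continuous polar coordinates of $\gamma(s)$, with $r(s)>0$ on $[0,L)$ and $r(s)\to 0$. Since $\Gamma$ has no inflections in $(0,L)$, the signed curvature $\alpha'=\pm\kappa$ is continuous and nonvanishing, so $\alpha$ is strictly monotonic on $(0,L)$. A direct polar computation gives
\[
\theta'(s)\;=\;\frac{\sin(\alpha(s)-\theta(s))}{r(s)}.
\]

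The essential input is Lemma \ref{lem:spiraltangent}: for each $s\in(0,L)$, the tangent line at $\gamma(s)$ never passes through $o$, so the tangent direction is never parallel or antiparallel to the radial direction, i.e., $\alpha(s)-\theta(s)\notin\pi\Z$ on $(0,L)$. Since $\alpha-\theta$ is continuous, it must remain in a single component of $\R\setminus\pi\Z$, an open interval of length $\pi$. Hence $\alpha-\theta$ is bounded, $\sin(\alpha-\theta)$ has constant sign, and consequently $\theta$ is strictly monotonic as well. This boundedness is the crux of the argument, and the step where Lemma \ref{lem:spiraltangent} does all the work.

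The dichotomy now depends on whether $\theta$ is bounded on $[0,L)$. If $\theta$ is unbounded, monotonicity forces $\theta\to\pm\infty$, and by the intermediate value theorem $\gamma$ meets every ray emanating from $o$ infinitely often. Otherwise $\theta$ is bounded and monotonic, so $\theta\to\theta_\infty\in\R$; by boundedness of $\alpha-\theta$ and monotonicity of $\alpha$, also $\alpha\to\alpha_\infty\in\R$, and the tangent line at $\gamma(s)$, passing through $\gamma(s)\to o$ with angle $\alpha(s)\to\alpha_\infty$, converges to the line through $o$ in direction $\alpha_\infty$.

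For the final clause, if the interior of $\Gamma$ avoids some ray from $o$, then the first alternative is ruled out and we are in the second case, so $\gamma'(s)\to T_0:=(\cos\alpha_\infty,\sin\alpha_\infty)$. Comparing $\gamma(s)-\gamma(s_0)=\int_{s_0}^s\gamma'$ with $(s-s_0)T_0$ forces $L<\infty$ (otherwise $|\gamma(s)-\gamma(s_0)|$ would grow linearly in $s$, contradicting $\gamma(s)\to o$), and then setting $\gamma(L):=o$ extends $\gamma$ to a $\C^1$ map on $[0,L]$ with $\gamma'(L)=T_0\neq 0$, giving a regular $\C^1$ parametrization of $\Gamma$.
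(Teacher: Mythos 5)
Your proof is correct and follows essentially the same route as the paper's: the tangent angle is strictly monotonic since there are no inflections, Lemma \ref{lem:spiraltangent} forces the tangent direction never to be radial so that the tangent angle and the polar angle of the position differ by a quantity confined to an interval of length $\pi$, and the dichotomy then reduces to whether these angles wind indefinitely. Your version merely makes the bookkeeping explicit (the ODE $\theta'=\sin(\alpha-\theta)/r$ and the finite-length argument for the $\C^1$ extension), which the paper leaves implicit.
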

 \begin{proof}
 Let $L$ denote the (possibly infinite) length of $\Gamma$, 
 $\gamma\colon[0,L)\to\Gamma$ be a unit speed parametrization of $\Gamma-\{o\}$ with $\lim_{t\to L}\gamma(t)=o$, and
 $T:=\gamma'\colon[0,L)\to\S^1$ be the corresponding tantrix. Then we may write $T(t)=(\cos(\theta(t)), \sin(\theta(t)))$, for some continuous function $\theta\colon [0,L)\to\R$. Since $\Gamma-\{o\}$ has no inflections, $\theta'\neq 0$.  For definiteness, we may suppose that $\theta'>0$, or $\theta$ is strictly increasing. Then either $\theta$ is bounded above, or it increases indefinitely. If $\theta$ is bounded above, then it converges to a limit, and it follows that $T$ may be continuously extended to $[0,L]$. This would show that $\Gamma$ is $\C^1$-regular. On the other hand, if $\theta$ is not bounded above, then $T$  winds around  $\S^1$ indefinitely. By Lemma \ref{lem:spiraltangent},
$\gamma(t)/\|\gamma(t)\|$ and $T(t)$ can never be parallel. So $\gamma$  has to wind around $o$ indefinitely as well.
 \end{proof}

Now we can complete the proof of Proposition \ref{prop:singularity}. Recall  that here we are assuming that the  double spiral $\Gamma$   is not convex. So on at least one of the arms of $\Gamma$, say $\Gamma_1$, the principal normals point outward with respect to the given proper side $S$. Take a point $p$ of $\Gamma_1$ other than $o$. Then, for some $r>0$, there exists a circle $C$ of radius $r$ which lies in $S$ and passes through $p$.  Let $q\in op$ be the point closest to $o$ along $\Gamma_1$ such that there passes a circle $C$ of radius $r$ through $q$ which lies in $S$; see Figure \ref{fig:wedgeball}. 

 \begin{figure}[h] 
   \centering
   \begin{overpic}[height=1.5in]{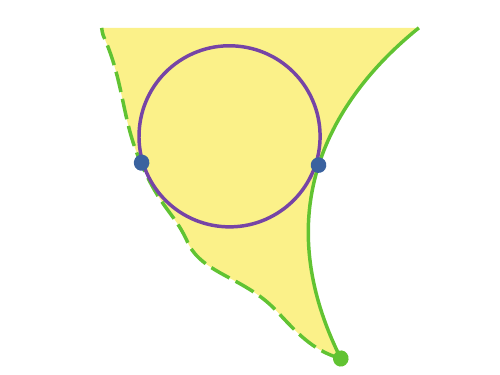}
   \put(66,37){$q$}
    \put(20,37){$q'$}
     \put(43,32){$\sigma$}
     \put(70,0){$o$}
      \put(82,60){$\Gamma_1$}
      \end{overpic}
   \caption{}\label{fig:wedgeball}
\end{figure}

If $q=o$, then  each arm of $\Gamma$ is $\C^1$-regular, by Lemma \ref{lem:spiralball}. So $S$ has a well-defined angle $\theta$ at $o$. Since by assumption $S$ is a proper side of $\Gamma$, $\theta\leq\pi$. On the other hand, if $q=o$, then  $\theta\geq\pi$. So we conclude that $\theta=\pi$, which yields that $\Gamma$ is $\C^1$-regular. Then Lemma \ref{lem:semiconvex} completes the proof.

So we may suppose that $q\neq o$. Then $C$ must intersect the other arm $\Gamma_2$ of $\Gamma$ at some point (other than $o$); because,  by Lemma \ref{lem:spiraltangent}, $C$ does not intersect the arc $oq$ of $\Gamma_1$ at any point other than $q$. Thus if $C$ were disjoint from $\Gamma_2$, then we could slide $C$ closer to $o$ along $\Gamma_1$ which would be a contradiction. 

Let $q'$ be the closest point of $\Gamma_2$ to $o$ which intersects $C$. Then $q$ and $q'$ determine an arc $\sigma$ in \ $C$ such that the union of $\sigma$ with the arc $q'q \subset\Gamma$ forms a simple closed curve which  bounds a region outside $C$. Replacing the arc $q'q$ of $\Gamma$ with $\sigma$ then yields a $\C^1$-regular arc $\ol\Gamma$. Note that $\ol\Gamma$ has at most only two points near which it may not be $\C^2$-regular, namely $q$ and $q'$. Since $\ol\Gamma$ is $\C^1$-regular, however, we may smooth away these singularities by Lemma \ref{lem:semiconvex}, and possibly Lemma \ref{lem:convexsmoothing}, which completes the proof. Indeed, note that $\ol\Gamma$ is semiconvex near $q$, so removing that singularity will cost precisely one inflection. On the other hand, by Lemma \ref{lem:convexorsemi}, $\ol\Gamma$ is either convex or semiconvex near $q'$, so removing $q'$ will cost either $0$ or $1$ inflections respectively. The former case occurs only when $\Gamma$ is semiconvex, and the latter occurs only when $\Gamma$ is concave. So the case where $\Gamma$ is semiconvex will cost  $1$ inflection, and the concave case will cost $2$ inflections, as claimed.

\subsection{Singularities which are double points}\label{subsec:double}
To complete the proof of Theorem \ref{thm:singularity}, it remains only to consider the case where $p$ is a singular point of $\Gamma$ which is also a double point (but is not a multiple singularity). To see why this would suffice, note that as in Section \ref{subsec:simple}, assuming $U$ is sufficiently small, we may identify it with the interior of a ball $B\subset\R^2$ centered at  $o$. Then $B$ will contain precisely two branches or subarcs of $\Gamma$, say $\Gamma_1$ and $\Gamma_2$. Furthermore, $\Gamma_1$ is $\C^2$-regular, while $\Gamma_2$ has precisely one singularity at $o$, which is the only point where the two arcs meet. We show that we may perturb $\Gamma_1$ and $\Gamma_2$ so as to ``separate" the singularity and the double point. This perturbation will leave $\Gamma_1$ and $\Gamma_2$ fixed near their end points, and does not increase the numbers of intersections, singularities, or inflections of these curves. Then we may remove the singularity, which is now simple, via Proposition \ref{prop:singularity}, to complete the proof of Theorem \ref{thm:singularity}. In summary, all we need is to show:

\begin{prop}\label{prop:seperate}
Let $\Gamma_i$, $i=1$, $2$, be  simple  arcs  which lie in a ball $B\subset\R^2$ centered at $o$,  and intersect each other only at $o$. Suppose that $\Gamma_1$ is $\C^2$-regular,  $\Gamma_2$ is $\C^2$-regular in the complement of $o$, and $\Gamma_i$ have only finitely many inflections. Then there are simple arcs $\tilde \Gamma_i$  in $B$ which coincide with $\Gamma_i$  near their end points, have  the same regularity properties as $\Gamma_i$,  have no  more inflections than $\Gamma_i$ have, and intersect   at most at one point which is not a singularity. 
\end{prop}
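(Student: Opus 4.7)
We set $\tilde\Gamma_2:=\Gamma_2$ (possibly after a preliminary $\C^2$-small perturbation near $o$) and produce $\tilde\Gamma_1$ by a $\C^2$-small perturbation of $\Gamma_1$ supported in a neighborhood of $o$, translating it off $o$ in the direction opposite to the ``cusp'' of $\Gamma_2$ at $o$. When the cusp of $\Gamma_2$ is transverse enough to $\Gamma_1$, this separates the two arcs entirely in a neighborhood of $o$, giving $\tilde\Gamma_1\cap\Gamma_2=\emptyset$, which a fortiori satisfies the ``at most one regular intersection'' conclusion. The case where the cusp of $\Gamma_2$ is tangent to $\Gamma_1$ at $o$ is degenerate and requires the preliminary perturbation of $\Gamma_2$.

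\textbf{Setup and main construction.} Place coordinates with $o=(0,0)$ and the tangent of $\Gamma_1$ at $o$ as the $x$-axis, so that, after shrinking $B$, $\Gamma_1\cap B$ is the graph of a $\C^2$ function $f_1\cn[-a,a]\to\R$ with $f_1(0)=f_1'(0)=0$, and $\gamma_2(t)=(\alpha(t),\beta(t))$ for $t\in[-b,b]$, with $\gamma_2(0)=o$, $\gamma_2'(0)=0$, and $\alpha([-b,b])\subset(-a,a)$. Consider the signed vertical distance
\begin{equation*}
h_0(t):=\beta(t)-f_1(\alpha(t)),
\end{equation*}
whose zero set equals $\gamma_2^{-1}(\Gamma_1)=\{0\}$ by the hypothesis $\Gamma_1\cap\Gamma_2=\{o\}$. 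A routine Taylor computation using $f_1(0)=f_1'(0)=0$ and $\gamma_2'(0)=0$ yields $h_0(0)=h_0'(0)=0$ and $h_0''(0)=\beta''(0)$. Assume first that $\beta''(0)\ne 0$, and set $\sigma:=\sign(\beta''(0))$; shrinking $b$, $h_0$ then has constant sign $\sigma$ on $[-b,b]\setminus\{0\}$. Apply Lemma \ref{lem:perturb} (or its sign-reversed analogue) to form $\tilde f_1:=f_1-\sigma\epsilon\phi$, where $\phi$ is a smooth bump with $\phi\equiv 1$ on $\alpha([-b,b])$ and $\phi\equiv 0$ near $\pm a$, and $\epsilon>0$ is small. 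The resulting $\tilde\Gamma_1:=\graph(\tilde f_1)$ is $\C^2$-regular, agrees with $\Gamma_1$ near $\pm a$ (hence globally outside $B$), and has no new inflections. On the plateau of $\phi$, the new distance $\tilde h(t)=h_0(t)+\sigma\epsilon$ has strict sign $\sigma$ throughout $[-b,b]$, so $\tilde\Gamma_1\cap\Gamma_2=\emptyset$.

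\textbf{Degenerate case and main obstacle.} If $\beta''(0)=0$---the case where the cusp direction of $\Gamma_2$ at $o$ is tangent to $\Gamma_1$---one first replaces $\gamma_2$ by $\gamma_2+\delta\phi_0(t)t^2(0,1)$, where $\phi_0$ is a bump with $\phi_0(0)=1$ supported in a tiny neighborhood of $0$, and $\delta\ne 0$ is small. A direct check shows that the perturbed $\gamma_2$ retains its unique singularity at $t=0$ (the added term and its first derivative vanish at $0$), introduces no new double points or inflections for $\delta$ small by Lemma \ref{lem:perturb}, and yields $\beta''(0)=2\delta\ne 0$, thereby reducing the degenerate case to the non-degenerate one. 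Verifying that this preliminary step preserves the property $\Gamma_1\cap\Gamma_2=\{o\}$ is the main technical obstacle: it requires shrinking $b$ sufficiently so that $h_0(t)=o(t^2)$ is strictly dominated by $\delta\phi_0(t)t^2$ on a neighborhood of $0$, while simultaneously controlling the sign of the new $h_0$ on the transition region where $\phi_0$ drops from $1$ to $0$. Once the reduction is made, the non-degenerate construction above completes the proof.
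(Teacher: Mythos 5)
Your strategy breaks down precisely in the case that matters: when the two arms of $\Gamma_2$ lie on opposite sides of $\Gamma_1$ near $o$, i.e., when the branches cross. In that case the two endpoints of $\Gamma_2$ lie in different components of $B-\Gamma_1$; since $\tilde\Gamma_1$ and any admissible perturbation of $\Gamma_2$ must agree with the original arcs near their endpoints, the perturbed arcs are forced to intersect, so the conclusion $\tilde\Gamma_1\cap\Gamma_2=\emptyset$ is topologically impossible. The correct target --- as the statement itself signals --- is a single intersection at a non-singular point, not disjointness. Note that the crossing case is exactly your ``degenerate'' case: if $h_0(0)=h_0'(0)=0$ and $h_0$ changes sign at $0$, then necessarily $h_0''(0)=\beta''(0)=0$. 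Consequently the ``main technical obstacle'' you flag --- preserving $\Gamma_1\cap\Gamma_2=\{o\}$ while forcing $\beta''(0)\neq 0$ --- is not technical but insurmountable: making $\beta''(0)\neq 0$ forces $h_0$ to have constant sign near $0$, while the global sign change of $h_0$ (forced by the endpoints sitting on opposite sides of $\Gamma_1$) then produces a new zero, i.e., a new intersection point, in the transition region of $\phi_0$. Your non-degenerate construction, applied afterwards, silently discards these new intersections by shrinking $b$.

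The paper accepts that one intersection must survive and instead separates the double point from the singularity: it reduces to the crossing case, shows the relevant arm of $\Gamma_2$ is $\C^1$-regular at $o$ (Lemma \ref{lem:spiralball}), rotates its tangent off that of $\Gamma_1$ (Lemma \ref{lem:rotatespiral}) so the arcs meet transversely at $o$, and then perturbs $\Gamma_1$ (Lemma \ref{lem:perturb}) to slide the single transverse intersection to a point where both curves are regular. A secondary gap in your argument: the perturbation $\gamma_2\mapsto\gamma_2+\delta\phi_0(t)t^2(0,1)$ is performed at the singular point of $\gamma_2$, where the curve need not be a graph with nonvanishing second derivative (it may even spiral), so Lemma \ref{lem:perturb} does not control its inflections or self-intersections there; handling exactly this difficulty is the point of the spiral analysis in Section \ref{sec:singularity}. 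Your treatment of the genuinely non-crossing case (where $\Gamma_2$ lies locally on one side of $\Gamma_1$) does agree with the paper's one-line disposal of that case via Lemma \ref{lem:perturb}, but that is the easy half of the proposition.
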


To establish this proposition, we may assume that $\Gamma_1$ and $\Gamma_2$ cross each other at $o$, i.e., one of the arms of $\Gamma_2$, say $\ol\Gamma_2$, lies on one side of $\Gamma_1$ while the other arm  lies on the opposite side of $\Gamma_1$. Indeed if $\Gamma_2$ lies entirely on one side of $\Gamma_1$, then we may perturb $\Gamma_1$ into the opposite side via Lemma \ref{lem:perturb} which will quickly remove the double point, and leave behind a simple singularity. 

Second, since by assumption $\Gamma_i$  are allowed to have only finitely many inflections, we may assume, after replacing $\Gamma_i$  by smaller subarcs (and reducing the size of $B$ accordingly), that these curves have no inflections except possibly at $o$. 

Third,  we may  assume after a rigid motion that $\Gamma_1$ is tangent to the $x$-axis at $o$, and $\ol\Gamma_2$ lies ``above" $\Gamma_1$ near $o$; see  Figure \ref{fig:pies}(a).
Also note that, by Lemma \ref{lem:spiralball},  $\ol\Gamma_2$ is $\C^1$-regular. Further, after a perturbation we may assume that $\ol\Gamma_2$ is not tangent to the $x$-axis, by the next lemma; see  Figure \ref{fig:pies}(b). More specifically, near $o$ we may represent $\ol\Gamma_2$  as the graph of a function $f$ satisfying the hypothesis of the following lemma. Then replacing the graph of $f$ with the graph of the function $\tilde f$ given by this lemma yields the desired perturbation of  $\ol\Gamma_2$.

\begin{figure}[h] 
   \centering
  \begin{overpic}[height=0.9in]{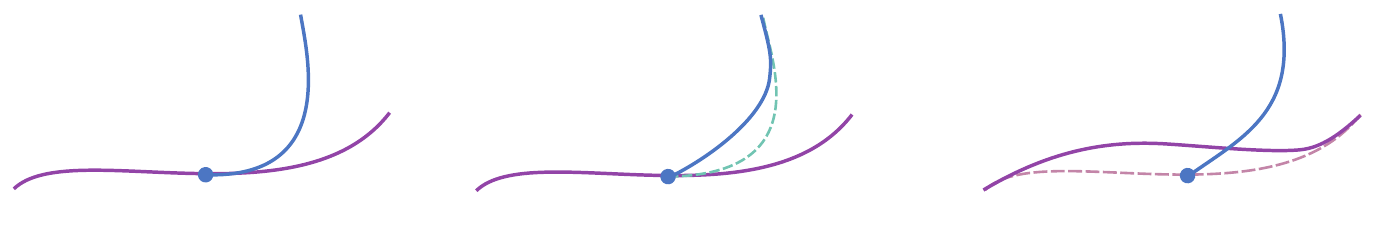}
   \put(12,-1){$(a)$}
   \put(48,-1){$(b)$}
   \put(85,-1){$(c)$}
   \end{overpic}

   \caption{}\label{fig:pies}
\end{figure}

\begin{lem}\label{lem:rotatespiral}
Let $f\colon [0,a]\to\R$ be a $\C^1$ function which is $\C^2$ on $(0,a]$. Suppose that $f(0)=f'(0)=0$, and $f$, $f''>0$ on $(0,a]$. Then there exists a $\C^1$ function $\tilde f\colon [0,a]\to\R$, which is $\C^2$ on $(0,a]$, $\tilde f(0)=0$, $\tilde f'(0)>0$, $\tilde f\geq f$,  $\tilde f''>0$ on $(0,a]$, and $\tilde f\equiv f$ near $a$.
\end{lem}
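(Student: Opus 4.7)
The plan is to construct $\tilde f$ by adding to $f$ a small, smooth ``tilt" of the form $\epsilon\, x\, \phi(x)$, where $\phi$ is a fixed cutoff that equals $1$ near $0$ and $0$ near $a$, and $\epsilon>0$ will be chosen small enough to preserve convexity. This is the most direct way to rotate the tangent at the origin while leaving $f$ untouched near $a$.

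More precisely, I would fix a $\C^\infty$ nonincreasing function $\phi\colon[0,a]\to[0,1]$ with $\phi\equiv 1$ on $[0,a/2]$ and $\phi\equiv 0$ on $[3a/4,a]$, and set
\begin{equation*}
\tilde f(x):= f(x)+\epsilon\, x\,\phi(x).
\end{equation*}
The properties that are immediate from this definition, for every $\epsilon>0$, are: $\tilde f$ is $\C^1$ on $[0,a]$ and $\C^2$ on $(0,a]$ (since $x\phi(x)$ is $\C^\infty$), $\tilde f(0)=0$, $\tilde f'(0)=f'(0)+\epsilon=\epsilon>0$, $\tilde f\geq f$, and $\tilde f\equiv f$ on $[3a/4,a]$.

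The only condition left to verify is $\tilde f''>0$ on $(0,a]$, and this is where $\epsilon$ must be chosen carefully. Writing $g(x):=\epsilon\, x\,\phi(x)$ we have
\begin{equation*}
g''(x)=\epsilon\bigl(2\phi'(x)+x\,\phi''(x)\bigr).
\end{equation*}
On $(0,a/2]$ we have $\phi'=\phi''=0$, so $g''\equiv 0$ and $\tilde f''=f''>0$ there. On $[3a/4,a]$ we have $g\equiv 0$, so $\tilde f''=f''>0$ there as well. On the transition interval $K:=[a/2,3a/4]$, the function $f''$ is continuous and strictly positive, hence bounded below by some $m>0$ on this compact set; meanwhile $|g''|\leq C\epsilon$ on $K$ for a constant $C$ depending only on $\phi$ and $a$. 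Choosing $\epsilon<m/C$ gives $\tilde f''\geq m-C\epsilon>0$ on $K$, completing the verification.

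The only mild obstacle is the last estimate, which relies on the fact that $f''$ is bounded below by a positive constant on every compact subinterval of $(0,a]$; this follows from the continuity of $f''$ together with the strict positivity hypothesis $f''>0$ on $(0,a]$, and is what allows the cutoff construction to go through. No deeper idea is needed.
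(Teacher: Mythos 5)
Your proposal is correct and follows essentially the same route as the paper: the paper adds $\epsilon\phi$ where $\phi$ is a smooth nonnegative function with $\phi(x)\equiv x$ near $0$ and $\phi\equiv 0$ near $a$, which is exactly your $x\,\phi(x)$ written as a single bump, and then uses the same compactness argument (positivity of $f''$ on the transition interval plus smallness of the perturbation's second derivative) to preserve $\tilde f''>0$. No issues.
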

\begin{proof}
Let $\phi\colon [0,a]\to\R$ be a $\C^\infty$ nonnegative function such that $\phi(x)\equiv x$ on $[0,a/3]$, and $\phi\equiv 0$ on $[2a/3,a]$. Set $\tilde f:=f+\epsilon\phi$ for constant $\epsilon>0$. Then $\tilde f\geq f$. Further, $\tilde f'(0)>0$, $\tilde f''\equiv f''$ on $(0,a/3]\cup [2a/3,a]$, and $\tilde f''\to f''$ with respect to the $\C^2$ norm on $[a/3, 2a/3]$. So, for small $\epsilon$, we  have $\tilde f''> 0$ on $(0,a]$.  
 \end{proof}

So  we may assume that $\ol\Gamma_2$ meets $\Gamma_1$ transversely at $o$. Then by Lemma \ref{lem:perturb} we may perturb $\Gamma_1$, by a sufficiently small amount with respect to the $\C^1$-norm, so that it intersects $\ol\Gamma_2$ at only one point which is different from $o$, and does so transversely; see Figure \ref{fig:pies}(c). This will settle Proposition \ref{prop:seperate}, which will in turn complete the proof of Theorem \ref{thm:singularity}.

\section{Surgery on Regular Double Points}\label{sec:double}
In this section we prove an analogue of the main result of the last section, Theorem \ref{thm:singularity}, for multiple points. Recall that, as far as proving Theorem \ref{thm:main2} is concerned, we may assume that all multiple points of $\Gamma$ are double points.

\begin{thm}\label{thm:doublepoints}
Let $M$ be a Riemannian surface of constant nonnegative curvature, $\Gamma\subset M$ be a  $\C^{2}$-regular closed curve  with finitely many  inflections and singularities, and $p$ be an isolated double point of $\Gamma$ which is not a multiple singularity. Then for every open neighborhood $U$ of $p$ which does not contain any other multiple points of $\Gamma$, there is a $\C^2$-regular closed curve $\tilde\Gamma\subset M$ such that $\tilde\Gamma=\Gamma$ outside $U$, $\tilde\Gamma$ has no multiple points inside $U$,
 and $\Sigma^+(\tilde\Gamma)\leq\Sigma^+(\Gamma)$.
\end{thm}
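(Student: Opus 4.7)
The plan is to reduce to the planar case $M=\R^2$ by the Beltrami projection (Lemma \ref{lem:beltrami}), identifying the neighborhood $U$ of $p$ with a ball in $\R^2$ centered at $p$. After shrinking $U$ if necessary, we may assume that $\Gamma \cap U$ consists of precisely two $\C^2$-regular branches $\Gamma_1, \Gamma_2$ meeting only at $p$, and that $U$ contains no inflections of $\Gamma$. Since removing the double point $p$ decreases $\mathcal{D}^+$ by one while $\mathcal{S}=0$ is unchanged, $\Sigma^+$ will automatically drop by two, leaving a ``budget'' of at most two new inflections for the replacement curve $\tilde\Gamma$.

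I would first handle the easier subcase in which $\Gamma_1$ and $\Gamma_2$ are tangent at $p$ but do not cross, i.e., in coordinates where their common tangent line is horizontal the branches are graphs of functions $f_1, f_2$ with $f_1 - f_2$ of constant sign near $0$ and vanishing only at $0$. Applying Lemma \ref{lem:perturb} to the upper graph with a $\C^2$-small positive bump supported in an interval on which the branches meet only at $0$ produces $\tilde f_1 > f_2$ strictly, separating the branches without creating any new inflections (by the last assertion of Lemma \ref{lem:perturb}). Thus in this subcase $\mathcal{I}$ is unchanged and $\Sigma^+$ drops by two.

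The main subcase is when $\Gamma_1$ and $\Gamma_2$ cross at $p$ (transversally, or tangentially with a sign change of $f_1 - f_2$), for which I plan a local surgery. Choose a smaller concentric ball $B' \subset U$, delete the two arcs $\Gamma_i \cap B'$, and reconnect the four resulting ports on $\partial B'$ by a pair of disjoint $\C^2$-regular arcs inside $B'$ that match $\Gamma$ in position, tangent, and second derivative at each port. Of the two planar non-crossing reconnections available at a crossing, exactly one produces a single closed curve $\tilde\Gamma$ (the other splits $\Gamma$ into two components); the cyclic order of the four ports inherited from the parametrization of $\Gamma$ singles out the connected reconnection, which I would choose. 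Each bridging arc can then be constructed in the spirit of the double-spiral smoothings of Section \ref{subsec:simple}: depending on the relative signs of the curvatures of $\Gamma_1, \Gamma_2$ at its two ports, the arc is built either as in Lemma \ref{lem:convexsmoothing} (when no curvature sign change is forced along it) or as in Lemma \ref{lem:semiconvex} (when exactly one is needed), yielding an arc with at most one inflection. The two bridging arcs together therefore contribute at most two new inflections, so $\Sigma^+(\tilde\Gamma)\le\Sigma^+(\Gamma)$.

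The chief obstacle is the last step of the crossing case: showing that the prescribed $\C^2$-jet data at the four ports can always be interpolated by a pair of disjoint bridging arcs with at most two new inflections in total. This amounts to a planar analog of the convex/semiconvex/concave trichotomy for double spirals in Section \ref{subsec:simple}, and I expect it to be handled by the same three-case construction, after determining the relative signs of the boundary curvatures and the topological type of the chosen smoothing; a final application of Lemma \ref{lem:perturb} may be needed to fine-tune the inflection structure of the bridging arcs.
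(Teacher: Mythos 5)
Your overall strategy matches the paper's: reduce to $\R^2$ via the Beltrami map, dispose of the non-crossing tangency by a one-sided perturbation (Lemma \ref{lem:perturb}), and in the crossing case resolve the double point by the non-crossing reconnection of the four local arms that keeps the curve connected, at a cost of at most two inflections. The paper implements the reconnection by observing that $B-(\Gamma_1\cup\Gamma_2)$ has four sectors, each bounded by a double spiral, and then applying Proposition \ref{prop:singularity} to a pair of \emph{opposite} sectors.

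There is, however, a genuine gap in your inflection accounting. You claim that each bridging arc can be built ``either as in Lemma \ref{lem:convexsmoothing} \dots\ or as in Lemma \ref{lem:semiconvex},'' hence with at most one inflection apiece. This omits the concave configuration: when the prescribed curvatures at the two ports of a single bridging arc both bend \emph{away} from the sector the arc must traverse, that arc is forced to change the sign of its curvature twice, so it needs two inflections, not one (this is exactly the concave double spiral of Section \ref{subsec:simple}, for which Proposition \ref{prop:singularity} only guarantees ``at most two''). The total of two is still correct, but only because of a combinatorial fact you have not established: for a pair of opposite sectors the types pair up as either (semiconvex, semiconvex) or (convex, concave), so the two bridging arcs always cost $1+1$ or $0+2$ — never $2+2$ or $2+1$. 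This pairing holds because each branch $\Gamma_i$ is $\C^2$-regular with no inflection at the crossing, so its principal normal points into the side containing one of the two opposite sectors and away from the side containing the other; without it you cannot rule out that the \emph{connected} reconnection is the expensive one. Two smaller points: you should also justify that the two bridging arcs can be taken disjoint (automatic in the paper's setup, since they lie in opposite sectors which meet only at $p$), and shrinking $U$ does not remove an inflection sitting \emph{at} $p$ itself — that case requires the perturbation of Proposition \ref{prop:separate} (and the prior reduction, via Theorem \ref{thm:singularity}, to $p$ being a regular point), not merely a smaller neighborhood.
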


By Theorem \ref{thm:singularity} we may assume that $p$ is not a singular point of $\Gamma$. Further, as in Section \ref{subsec:double}, we may identify $U$ with the interior of a ball $B\subset\R^2$ centered at $o$, and let $\Gamma_i$, $i=1$, $2$, be the two branches  of $\Gamma$ in $B$. Also, again as in Section \ref{subsec:double}, we may assume that $\Gamma_1$ and $\Gamma_2$ cross each other due to Lemma \ref{lem:perturb}. Finally note that here $\Gamma_i$ are $\C^2$-regular, and by the next result we may assume that their point of intersection is not an inflection of either curve:

\begin{prop}\label{prop:separate}
Let $\Gamma_i$, $i=1$, $2$, be  simple  $\C^2$-regular arcs  which lie in a ball $B\subset\R^2$ centered at $o$,  and intersect each other only at $o$. Suppose that $\Gamma_i$ have no inflections except possibly at $o$. Then there are simple $\C^2$-regular arcs $\tilde \Gamma_i$  in $B$ which coincide with $\Gamma_i$  near their end points,  have no  more inflections than $\Gamma_i$ have,  are transversal to each other, and  intersect  at most at one point which is not an inflection of either curve. 
\end{prop}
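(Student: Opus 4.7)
The plan follows the template of Proposition \ref{prop:seperate}, with the added requirement that any surviving crossing must avoid the finitely many inflection points. I would distinguish cases according to how $\Gamma_1$ and $\Gamma_2$ meet at $o$.

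If $\Gamma_1$ and $\Gamma_2$ do not cross at $o$ (so $\Gamma_2$ lies locally on one side of $\Gamma_1$), I would apply Lemma \ref{lem:perturb} to push $\Gamma_1$ slightly into the opposite side. For sufficiently small amplitude, the perturbed arc stays inside $B$, has no more inflections, and is disjoint from $\Gamma_2$ in $B$; the conclusion holds vacuously.

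If $\Gamma_1$ and $\Gamma_2$ cross at $o$ with a shared tangent line, I would first tilt $\Gamma_2$ to break the tangency. After rotating coordinates, both arcs become graphs $y=f_i(x)$ with $f_i(0)=f_i'(0)=0$. Replace $f_2$ by $\tilde f_2 := f_2+\epsilon\phi$, where $\phi$ is a cutoff with $\phi(x)=x$ near $0$ and $\phi\equiv 0$ near $\pm a$, in the spirit of Lemma \ref{lem:rotatespiral}. Then $\tilde f_2'(0)=\epsilon\neq 0$, so the tangent lines at the crossing differ. The main obstacle is to verify that this tilt preserves the global intersection and inflection structure inside all of $B$, not merely near $o$; this follows from a compactness argument, since outside a fixed small neighborhood of $o$ both $|f_2''|$ and $|f_2-f_1|$ are bounded below by positive constants, and for sufficiently small $\epsilon$ the $\C^2$-small perturbation preserves these bounds. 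This reduces the tangential case to the transversal case.

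In the transversal crossing case, if $o$ is not an inflection of either curve we are done. Otherwise perturb $\Gamma_1$ via Lemma \ref{lem:perturb}: set $\tilde f_1 := f_1+\delta\psi$, with $\psi$ the standard bump that is identically $1$ near $0$ and vanishes near $\pm a$. Then $\tilde f_1''=f_1''$ near $0$, so the inflection of $\tilde\Gamma_1$ remains at $x=0$, while the crossing equation $\tilde f_1=f_2$ becomes $f_2-f_1=\delta$ near $0$, whose solution by the implicit function theorem (valid since $f_2'(0)-f_1'(0)\neq 0$) is a unique nearby point $x_c(\delta)\neq 0$. Hence the crossing is not an inflection of $\tilde\Gamma_1$; it is also not an inflection of $\Gamma_2$, whose only possible inflection is at $o$.
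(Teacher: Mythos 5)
Your overall strategy---reduce to a transversal crossing by tilting one arc, then use Lemma \ref{lem:perturb}-type bumps to move the crossing off the inflections---is the same as the paper's, and your non-crossing and transversal cases are sound (in the transversal case you even get by with perturbing only $\Gamma_1$, where the paper perturbs both arcs in succession). The gap is in the tangential case.

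Concretely: set $g:=f_1-f_2$. Tangency gives $g(0)=g'(0)=0$, so $g(x)=o(x)$ near $0$, while the crossing hypothesis forces $g$ to have one sign on each side of $0$, say $g\geq 0$ on $[0,a]$ and $g\leq 0$ on $[-a,0]$, with $g=0$ only at $0$. After your tilt, $\tilde f_2-f_1=\epsilon x-g(x)$ near $0$. If $\epsilon>0$, this is positive for all sufficiently small $x>0$ (the linear term dominates $g(x)=o(x)$), yet it equals $f_2-f_1<0$ where $\phi$ vanishes; so a new intersection appears in $(0,a)$, and symmetrically another in $(-a,0)$, in addition to the surviving intersection at $o$ (note $\tilde f_2(0)=0=f_1(0)$). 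Thus for one of the two signs of $\epsilon$ your arcs meet in three points, however small $\epsilon$ is, and the conclusion fails. Your compactness argument cannot catch this, because it only controls the region outside a fixed neighborhood of $o$, whereas the spurious intersections occur inside every such neighborhood. This is exactly what the paper's unnumbered lemma in Section \ref{sec:double} is designed to handle: the tilt must be applied with the sign dictated by the crossing configuration, so that the perturbed graph moves \emph{away} from the other arc on both half-intervals (in the paper's normalization, $\tilde f\geq f$ on $[0,a]$ and $\tilde f\leq f$ on $[-a,0]$ applied to the upper-then-lower arc), and one must then check directly that the difference of the two graphs keeps a strict sign on each punctured half-neighborhood of $0$. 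With that sign condition added and verified, your argument goes through.
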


Assume for now that the above proposition holds. Then after replacing $\Gamma_i$ with smaller subarcs, and replacing $B$ by a smaller ball centered at the new intersection point of $\Gamma_i$, we may assume that $\Gamma_i$ have no inflections, and intersect $\partial B$ only at their end points. Next note that  $B-(\Gamma_1\cup\Gamma_2)$ consists of precisely $4$ components, see Figure \ref{fig:x}. 
\begin{figure}[h] 
   \centering
   \includegraphics[height=1.2in]{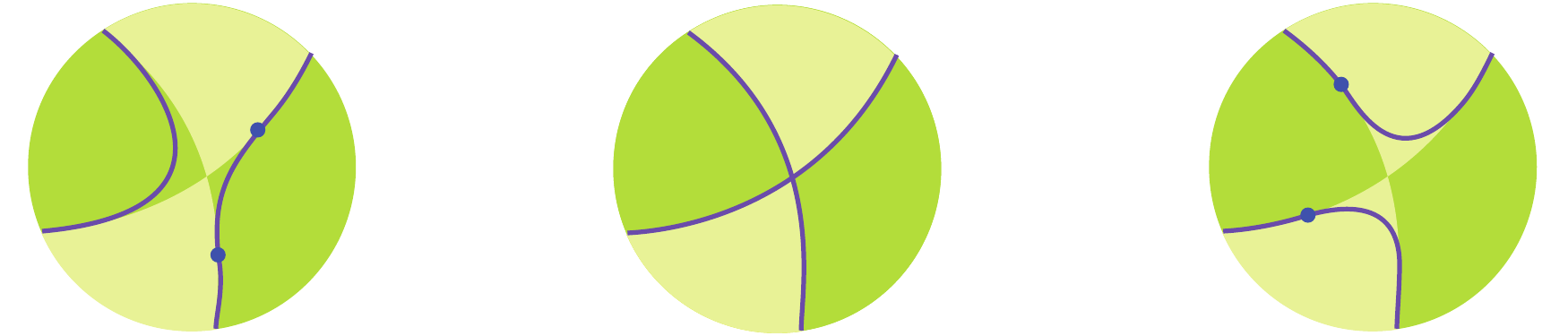}
   \caption{}\label{fig:x}
\end{figure}
Let us call the closure of each of these components a \emph{sector} of $\Gamma_1\cup\Gamma_2$. Each sector is bounded in the  interior of $B$ by a double spiral which is composed of one arm of $\Gamma_1$ and one arm of $\Gamma_2$, since by assumption $\Gamma_1$ and $\Gamma_2$ cross each other. Further, since by assumption $\Gamma_i$ intersect each other transversely, it follows that each sector is a proper side of the corresponding double spiral, as we defined in Section \ref{subsec:simple}.  Let us say that a sector is \emph{convex}, \emph{concave}, or \emph{semiconvex} according to whether the corresponding double spiral is convex, concave, or semiconvex respectively, again as defined in Section \ref{subsec:simple}. Now pick a pair of opposite sectors, i.e., a pair of sectors which do not share a common boundary spiral. Then there are only two possibilities: either both sectors are semiconvex, or one sector is convex while the other one is concave, as shown in  Figure \ref{fig:x}. In either case we may use Proposition \ref{prop:singularity} to remove the double point while adding at most two inflections. Finally note that one of these surgeries will leave $\Gamma$ connected (consider for instance the effect of these surgeries on the Figure $8$ curve). This completes the proof of Theorem \ref{thm:doublepoints}.

It only remains then to prove Proposition \ref{prop:separate}. First we show that after a perturbation we may assume that $\Gamma_i$  are transversal. To this end recall that we are assuming that $\Gamma_i$ cross each other, and  suppose that $\Gamma_i$ are tangent to each other at $o$. Then after a rotation, we may assume that $\Gamma_i$ are tangent to the $x$-axis. Let $f_i\colon [-a,a]\to\R$ be functions whose graphs coincide with neighborhoods of $o$ in $\Gamma_i$. Since $\Gamma_i$ cross each other, we may assume that $f_1\geq f_2$ on $[0,a]$, and $f_1\leq f_2$ on $[-a,0]$. Then the next lemma yields the desired perturbation:

\begin{lem}
Let $f\colon [-a,a]\to\R$ be a $\C^2$ function with $f(0)=f'(0)=0$, and $f''\neq 0$ on $[-a,0)\cup(0,a]$. Then there exists a $\C^2$ function $\tilde f\colon [-a,a]\to\R$ with $\tilde f(0)=0$, but $\tilde f'(0)\neq 0$ such that $\tilde f''\neq 0$ on $[-a,0)\cup(0,a]$, $\tilde f\equiv f$ near $\pm a$, $\tilde f\geq f$ on $[0,a]$, and $\tilde f\leq f$ on $[-a,0]$. Furthermore, if $f''(0)\neq 0$, then $\tilde f''(0)\neq 0$ as well.\qed
\end{lem}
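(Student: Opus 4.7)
The plan is to set $\tilde f := f + \epsilon\,\phi$ for a small constant $\epsilon > 0$ and a carefully chosen $\C^\infty$ function $\phi\colon[-a,a]\to\R$. I want $\phi$ to behave like the identity near the origin (so that it contributes a nonzero slope at $0$ and has vanishing second derivative there) while being compactly supported inside $(-a,a)$ and having the same sign as $x$ throughout. A natural choice is $\phi(x):=x\,\psi(x)$, where $\psi\colon[-a,a]\to[0,1]$ is a standard $\C^\infty$ bump that is identically $1$ on $[-a/3,a/3]$ and identically $0$ outside $[-2a/3,2a/3]$. This is in the spirit of the perturbations used in Lemmas \ref{lem:perturb} and \ref{lem:rotatespiral}, adapted to introduce a tilt of opposite signs on the two half-intervals.

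With this choice of $\phi$, most of the required properties of $\tilde f$ are immediate and hold for every small $\epsilon>0$. Since $\phi(0)=0$ and $\phi'(0)=\psi(0)=1$, we obtain $\tilde f(0)=0$ and $\tilde f'(0)=\epsilon\neq 0$. Since $\psi\geq 0$, the function $\phi$ has the sign of $x$, so $\tilde f\geq f$ on $[0,a]$ and $\tilde f\leq f$ on $[-a,0]$. Since $\phi\equiv 0$ outside $[-2a/3,2a/3]$, we also get $\tilde f\equiv f$ near $\pm a$. The final assertion is equally painless: because $\phi(x)=x$ on $[-a/3,a/3]$, we have $\phi''(0)=0$, so $\tilde f''(0)=f''(0)$, which is nonzero whenever $f''(0)$ is.

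The only step that requires any genuine care, and therefore the one I regard as the main obstacle, is verifying that $\tilde f''\neq 0$ on all of $[-a,0)\cup(0,a]$; this is where the smallness of $\epsilon$ enters essentially. On the neighborhood $[-a/3,a/3]$ of $0$ the function $\phi$ is linear, so $\phi''$ vanishes identically and hence $\tilde f''=f''$, which is nonzero at every point of $[-a/3,0)\cup(0,a/3]$ by hypothesis. On the complementary compact set $A:=[-a,-a/3]\cup[a/3,a]$, the continuous function $|f''|$ attains a positive minimum $m>0$ by the hypothesis $f''\neq 0$ there. Setting $K:=\sup_{[-a,a]}|\phi''|$ and choosing any $\epsilon\in(0,m/(K+1))$, the triangle inequality gives $|\tilde f''|\geq |f''|-\epsilon|\phi''|\geq m-\epsilon K>0$ on $A$. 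Combining the two ranges yields $\tilde f''\neq 0$ on $[-a,0)\cup(0,a]$, finishing the verification.
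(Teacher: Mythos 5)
Your proof is correct and is essentially the argument the paper intends: the paper omits the proof of this lemma, remarking only that it is very similar to that of Lemma \ref{lem:rotatespiral}, and your perturbation $\tilde f = f + \epsilon\, x\,\psi(x)$ is precisely the two-sided adaptation of the $\tilde f = f+\epsilon\phi$ construction used there (with the same splitting into the region where $\phi$ is linear and the compact complement where $|f''|$ is bounded below). All the verifications, including the choice of $\epsilon$ small relative to $\min_A |f''|$ and $\sup|\phi''|$, are sound.
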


 The proof of the above lemma is very similar to that of Lemma \ref{lem:rotatespiral} and so will be omitted. Now that $\Gamma_i$ are transversal, we may use Lemma \ref{lem:perturb} to finish the proof of Proposition \ref{prop:separate}. More specifically, we just need to perturb $\Gamma_i$ to make sure that they do not intersect at inflection points. To this end we may first perturb $\Gamma_1$ via Lemma \ref{lem:perturb} to make sure that the intersection of $\Gamma_i$  is not an inflection of $\Gamma_2$. Next we perturb $\Gamma_2$, again via Lemma \ref{lem:perturb}, near the new intersection point of $\Gamma_i$, which will ensure that the intersection of $\Gamma_i$ is not an inflection of $\Gamma_1$ either. This settles Proposition \ref{prop:separate}, and thus completes the proof of Theorem \ref{thm:doublepoints}.

\begin{note}
The basic patterns for the formation of inflections in Figure \ref{fig:x} appear to have been  observed by Klein \cite[p. 188]{klein2} in the context of real algebraic curves.
\end{note}

\section{Proof of the Main Result}\label{sec:proof}
Here we complete the proof of  Theorem \ref{thm:main2}. We may assume that the quantities $\mathcal{D}$, $\mathcal{S}$, and $\mathcal{I}$ are all finite. Then  $\Gamma$ is a $\C^2$  curve with a finite number of singularities. Further, the finiteness of $\mathcal{I}$ implies that $\Gamma$ does not contain any geodesic arcs, and so cannot be contained entirely in a plane passing through the  origin. In particular, $\conv(\Gamma)$ has interior points. Recall also that we may assume that all points of $\Gamma$ which are not simple are double points, for otherwise $\mathcal{D}^+\geq 3$ and  there would be nothing left to prove. 
Next we show that we may assume that all the antipodal pairs of points of $\Gamma$ are simple and regular. To this end we need one more  fact concerning spherical curves which is a finer version of a lemma of Fenchel \cite{fenchel2,totaro}, since it assumes less regularity. 

\begin{lem}\label{lem:antipodal}
Let $\Gamma\subset\S^2$ be a simple arc which is $\C^2$-regular in its interior. If $\Gamma$ has no inflections in its interior, then it lies in an open hemisphere.
\end{lem}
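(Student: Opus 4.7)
The plan is to use Lemma~\ref{lem:hemisphere} from Section~\ref{sec:hemisphere}, together with the characterizations of convex spherical sets (Lemmas~\ref{lem:sphereconvex} and \ref{lem:lune}), to show that $\Gamma$ is trapped inside a closed hemisphere bounded by any of its interior tangent great circles, and then upgrade this to an open hemisphere. Since the interior of $\Gamma$ is $\C^{2}$-regular and has no inflections, the geodesic curvature $k$ with respect to a continuous choice of unit normal $N=\gamma\times T$ has definite sign throughout; we may assume $k>0$.

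\textbf{Main claim.} Fix any interior point $p_0=\gamma(s_0)$ (parametrizing by arclength near $p_0$), let $C_0$ be the tangent great circle to $\Gamma$ at $p_0$ oriented compatibly with $\gamma$, let $v=N(s_0)$ be the corresponding pole of $C_0$, and let $\ol H$ denote the closed hemisphere with pole $v$. Consider $f(s):=\langle\gamma(s),v\rangle$; using the identity $\gamma''=-\gamma+k\,(\gamma\times T)$ for unit-speed curves on $\S^2$, a direct calculation gives $f(s_0)=f'(s_0)=0$ and $f''(s_0)=k(s_0)>0$, so $f>0$ on a deleted neighborhood of $s_0$. I claim $\Gamma\subset\ol H$. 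If not, there is a smallest $s_1>s_0$ (or largest $s_1<s_0$) with $f(s_1)=0$, and the subarc $\Gamma':=\gamma|_{[s_0,s_1]}$ is a simple arc in $\ol H$, compatibly tangent to $C_0$ at $p_0$, meeting $C_0$ only at its endpoints. Lemma~\ref{lem:hemisphere} then forces the geodesic curvature of $\Gamma'$ to change sign, contradicting $k>0$.

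\textbf{Obstacle and finale.} The main technical obstacle is that Lemma~\ref{lem:hemisphere} requires the arc of $C_0$ from $p_0$ to $\gamma(s_1)$ in the compatible orientation to have length at most $\pi$. When this length exceeds $\pi$, I would instead close $\Gamma'$ with the \emph{longer} arc of $C_0$ to form a simple closed curve $\ol C\subset\ol H$ which contains a pair of antipodal points lying on $C_0$; Lemma~\ref{lem:sphereconvex}, combined with $k>0$ on $\Gamma'$ and the fact that the $C_0$-arc is a geodesic, forces the bounded region to be convex, and Lemma~\ref{lem:lune} then forces $\ol C$ to be a lune---impossible, since $\Gamma'$ has $k>0$ and is not a geodesic arc. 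Having established $\Gamma\subset\ol H$, the upgrade to an open hemisphere proceeds as follows: since $\Gamma$ has no geodesic subarcs, $\Gamma\cap C_0\subsetneq\Gamma$; any interior zero of $f$ must be a tangential contact (transversal crossings would exit $\ol H$), and applying the main claim at such contact points (or at a generic nearby basepoint) shows $\Gamma\cap C_0$ is finite. A small rotation of $v$ toward $\gamma(s_0)$ then strictly separates the origin from $\Gamma$, placing $\Gamma$ in an open hemisphere and completing the proof.
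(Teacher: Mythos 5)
Your main claim is false, and this is a fatal gap. An inflection-free simple arc in $\S^2$ need \emph{not} lie in the closed hemisphere $\ol{H}$ bounded by its tangent great circle $C_0$ at an arbitrarily chosen interior point $p_0$. For a counterexample, take a planar spiral of everywhere positive curvature that winds more than once around a point (e.g.\ $r(\theta)=1+\theta/10$, $\theta\in[-1,6\pi]$, in polar coordinates) and transplant it into an open hemisphere by the inverse of the Beltrami map, which preserves the sign of geodesic curvature (Lemma \ref{lem:beltrami}). The result is a simple, inflection-free, $\C^2$-regular arc; yet the tangent line at a point of the inner winding cuts through the outer windings, so the corresponding tangent great circle $C_0$ has points of $\Gamma$ strictly on both sides. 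The failure occurs precisely in the case you flag as the ``obstacle'': since the configuration $\Gamma'=\gamma|_{[s_0,s_1]}$ genuinely exists with $k>0$, the hypotheses of Lemma \ref{lem:hemisphere} must fail for it, i.e.\ the compatibly oriented arc of $C_0$ from $p_0$ to $\gamma(s_1)$ has length greater than $\pi$ --- and then no contradiction is available, because the curve simply crosses $C_0$ transversally at $\gamma(s_1)$ and leaves $\ol{H}$. Your proposed escape via Lemmas \ref{lem:sphereconvex} and \ref{lem:lune} does not repair this: the closed curve $\ol{C}=\Gamma'\cup A$ (with $A$ the long arc of $C_0$) has a cusp at $p_0$ where the two branches are mutually tangent, so the region into which the principal normal of $\Gamma'$ points has a reflex angle $2\pi$ at $p_0$ and is not convex, while the complementary sliver has geodesic curvature of the wrong sign along $\Gamma'$; neither region satisfies any of the three hypotheses of Lemma \ref{lem:sphereconvex} at $p_0$. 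Since the finale (finiteness of $\Gamma\cap C_0$ and the small rotation of $v$) also rests on the main claim, the argument collapses.

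The root of the problem is that the tangent great circle at a prescribed point need not be a \emph{supporting} great circle of $\Gamma$. The paper's proof avoids this by letting the curve choose the hemisphere: it takes the maximal initial subarc $pq$ contained in \emph{some} hemisphere $H$, so that $\partial H$ is automatically a supporting great circle of that subarc, and then analyzes the contact set $pq\cap\partial H$ --- if it consists of exactly two points they must be antipodal, and if of three or more points there is an interior tangency --- applying Lemma \ref{lem:hemisphere} at such contact points (together with Lemma \ref{lem:spiralball} for the needed $\C^1$-regularity of the subarcs involved). If you wish to keep your strategy, you must replace $C_0$ by a supporting great circle of a maximal hemispherical subarc from the outset, which is essentially the paper's argument.
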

\begin{proof}
Let $p$ be the initial boundary point of $\Gamma$, with respect to some fixed orientation, and $q$ be the last point in $\Gamma$ such that the  subarc $pq$   lies in a hemisphere $H$. Note that $pq$ lies in an open hemisphere, only if $q$ is the final boundary point of $\Gamma$, in which case we are done. Suppose then, towards a contradiction, that $pq$ does not lie in any open hemisphere. Then $pq$ must meet the boundary $C$ of $H$ in at least two points. If there are exactly two such points, then they must be antipodal, say $\pm r$. Consider the subarc $\ol\Gamma\subset\Gamma$ bounded by $\pm r$. Then $\ol\Gamma$ is $\C^1$-regular by Lemma \ref{lem:spiralball}, hence Lemma \ref{lem:hemisphere} applies to $\ol\Gamma$. Indeed, after rotating $C$ about $\pm r$, we may assume that $\ol\Gamma$ is tangent to $C$ at some point. Then Lemma \ref{lem:hemisphere} implies that  $\ol\Gamma$  has an inflection in its interior, which is impossible. So  $pq$ must meet $C$ in at least $3$ points, say $r$, $s$, $t$, and suppose that $s$ lies in the oriented arc $rt$.  Then $rt$ is again $\C^1$-regular by Lemma \ref{lem:spiralball}. Furthermore, $rt$ must be tangent to $C$ at $s$, and it follows once more from Lemma \ref{lem:hemisphere} that at least one of the  subarcs $rs$ or $st$  has an inflection in its interior, which is again impossible. 
\end{proof}

Let $\Sigma^+(\Gamma)$ be as in \eqref{eq:sig+}, and set
\begin{equation*}
\Sigma(\Gamma):=2(\mathcal{D}+\mathcal{S})+\mathcal{I}.
\end{equation*}
Lemma \ref{lem:antipodal} quickly yields: 

\begin{cor}\label{cor:antipodal}
Let $\Gamma\subset\S^2$ be a $\C^2$ closed curve, and $\pm p$ be a pair of antipodal points of $\Gamma$. If $\pm p$ are not both regular simple  points of $\Gamma$, then  $\Sigma(\Gamma)\geq 6$ and $\Sigma^+(\Gamma)\geq 4$.
\end{cor}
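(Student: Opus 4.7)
The plan is to case-split on the multiplicities $a:=|\gamma^{-1}(p)|$ and $b:=|\gamma^{-1}(-p)|$, both $\geq 1$. A straightforward combinatorial count (pairs within $\gamma^{-1}(p)$, within $\gamma^{-1}(-p)$, and crossing between them) yields the baseline bounds
\begin{equation*}
\mathcal{D}\geq\binom{a}{2}+\binom{b}{2}+ab=\binom{a+b}{2},\qquad \mathcal{D}^+\geq\binom{a}{2}+\binom{b}{2}.
\end{equation*}
When $a+b\geq 4$, these immediately give $\mathcal{D}\geq 6$ and, on inspection of the finitely many configurations with $a,b\geq 1$, $\mathcal{D}^+\geq 2$, which settles both conclusions at once.

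For the remaining cases---namely $(a,b)=(2,1)$ up to swap, and $a=b=1$ with the hypothesis forcing $\mathcal{S}\geq 1$---the baseline alone is insufficient, and I would extract the missing contributions from arcs of $\S^1$ cut at the preimages of $\pm p$. Exactly two of those arcs, call them $\alpha_1,\alpha_2$, have endpoints mapping to the antipodal pair $\{p,-p\}$. The key claim is that each such $\alpha_i$ contributes at least $1$ to $\Sigma^+(\Gamma)$ (and also to $\Sigma(\Gamma)$) above the baseline, from exactly one of three mutually exclusive sources:
\begin{enumerate}
\item $\gamma|_{\alpha_i}$ has an interior singular point, giving $+2$ via $\mathcal{S}$;
\item $\gamma|_{\alpha_i}$ is not injective, which---since the interior of $\alpha_i$ contains no preimages of $\pm p$---forces a pair of distinct interior parameters of $\alpha_i$ with equal images, giving $+2$ via $\mathcal{D}^+$;
\item $\gamma|_{\alpha_i}$ is a simple, interior-$\C^2$-regular arc with antipodal endpoints, so Lemma \ref{lem:antipodal} (contrapositive, since antipodal endpoints preclude lying in an open hemisphere) supplies an interior inflection, giving $+1$ via $\mathcal{I}$.
\end{enumerate}
Summing the two arc contributions with the baseline then yields $\Sigma^+\geq 4$ in the case $(2,1)$, and both $\Sigma\geq 6$ and $\Sigma^+\geq 4$ in the case $(1,1)$, finishing the proof.

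I expect the main subtlety to be bookkeeping: the contributions from $\alpha_1,\alpha_2$ must be disjoint both from each other and from the baseline count already supplied by $\pm p$. This will follow because the interiors of the $\alpha_i$ are pairwise disjoint in $\S^1$ and contain no preimages of $\pm p$, so any interior singular point, multiple point, or inflection found by the arc analysis is automatically new. A further point worth flagging is that item (ii) produces a contribution specifically to $\mathcal{D}^+$ and not merely to $\mathcal{D}$: non-injectivity of $\gamma|_{\alpha_i}$ is by definition a genuine coincidence $\gamma(\tau_1)=\gamma(\tau_2)$, not an antipodal pair, and this distinction is exactly what is needed to handle $\Sigma^+$ rather than only $\Sigma$.
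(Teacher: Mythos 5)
Your argument is correct and follows essentially the same route as the paper: the heart of both proofs is applying Lemma \ref{lem:antipodal} (in contrapositive form) to the two subarcs joining preimages of $p$ and $-p$ to extract an interior inflection, singularity, or double pair from each, then adding the $+2$ coming from the degeneracy at $\pm p$ itself, with the antipodal pair accounting for the gap between $\Sigma$ and $\Sigma^+$. Your multiplicity case-split and baseline count $\binom{a+b}{2}$ just make explicit the disjointness bookkeeping that the paper's three-line proof leaves implicit.
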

\begin{proof}
Consider the subarcs of $\Gamma$ with end points $\pm p$. By Lemma \ref{lem:antipodal} each of these arcs must have either an inflection, a singularity, or a double point in its interior. So it follows that $\Sigma^+(\Gamma)\geq 2$. Now if either $p$ or $-p$ is a singular or a double point of $\Gamma$, then we have $\Sigma^+(\Gamma)\geq 4$ as desired. Further, since $\pm p\in\Gamma$, we also have $\Sigma(\Gamma)\geq \Sigma^+(\Gamma)+2\geq 6$, which completes the proof.
\end{proof}

Now we are ready to finish the proof of Theorem \ref{thm:main2}. First note that
if $\Gamma$ is symmetric, then by the last observation we may assume that it has no singularities or double points,  because every point of $\Gamma$ is antipodal to another one. Then Proposition \ref{prop:DS} completes the proof of the inequality \eqref{eq:2.75}. It remains then to prove Inequalities \eqref{eq:2} and \eqref{eq:2.5}.   There are  two cases to consider: either  $o$ is an interior point of $\conv(\Gamma)$, or $o$ lies on the boundary of $\conv(\Gamma)$.

Suppose first that $o$ lies in the interior of  $\conv(\Gamma)$. Then $o$ remains in $\conv(\Gamma)$ after small perturbations of $\Gamma$. Thus, using Theorems \ref{thm:singularity} and \ref{thm:doublepoints}, we may remove the singularities and double points of $\Gamma$ while keeping $o$ in $\conv(\Gamma)$, and without increasing $\Sigma^+(\Gamma)$. Furthermore, $\Sigma(\Gamma)$ will not increase either, since by Corollary \ref{cor:antipodal} we may assume that the antipodal points of $\Gamma$ are all simple and regular. Thus we do not need to perturb $\Gamma$ near those points (which might risk  increasing the number of antipodal pairs of points of $\Gamma$). In short, we may assume that $\mathcal{D}^+=\mathcal{S}=0$, in which case Proposition \ref{prop:DS} completes the proof.

It remains now to consider the case where $o$ lies on the boundary of $\conv(\Gamma)$, or equivalently $\Gamma$  lies in a hemisphere  $H$ of $\S^2$. Since $\Gamma\subset H$ and $o\in\conv(\Gamma)$, 
$\Gamma$ must intersect the great circle $C$ which bounds $H$. There are only two cases to consider: either $\Gamma\cap C$ contains a pair of antipodal points, or not.

Suppose first that  $\Gamma\cap C$  contains a pair of antipodal points $\pm p$.   By Corollary \ref{cor:antipodal}, $\pm p$ must be simple regular points of $\Gamma$. Now using Theorems \ref{thm:singularity} and \ref{thm:doublepoints}, we may remove the singularities and double points of $\Gamma$ without perturbing $\pm p$, and thus keeping $o$ in $\conv(\Gamma)$. Recall that these operations do not increase $\Sigma^+(\Gamma)$. Furthermore,  $\Sigma(\Gamma)$ does not increase either, because all the singularities and double points of $\Gamma$ are away from $-\Gamma$, by Corollary \ref{cor:antipodal}, and thus perturbations near these points will not create any new pairs of antipodal points. Now that we have eliminated all the singularities and double points, Proposition  \ref{prop:DS}  completes the proof in this case.

Finally suppose that $\Gamma\cap C$ does not contain any pairs of antipodal points. Then $\Gamma\cap C$ must contain  three points $p_1$, $p_2$, $p_3$, which contain  $o$ in the relative interior of their convex hull (this follows from Carath\'{e}odory's theorem, as we  argued in the proof of Corollary \ref{cor:hemisphere}).  Note that at least one of these three points, say $p_1$, must be a regular simple point of $\Gamma$, for otherwise $\Sigma^+(\Gamma)\geq 6$ and we are done. Furthermore,  $-p_1\not\in\Gamma$ by assumption. Now we may use Lemma \ref{lem:perturb} to perturb $\Gamma$ near $p_1$, and  without increasing $\Sigma(\Gamma)$ or $\Sigma^+(\Gamma)$, so that $o$ lies in the interior of $\conv(\Gamma)$.  Indeed, since  we are assuming that $\Gamma$ is not a great circle, there is a  point $p_0\in \Gamma$ which does not lie in the plane of $C$. So $o$ lies in the tetrahedron $T$ formed by $\{p_0,\dots,p_3\}$. Suppose that $C$ lies in the $xy$-plane and $T$ lies ``above" this plane. Then by construction $o$ lies in the interior of the base of $T$. So it is clear that if we move  $p_1$ by a small distance below the $xy$-plane, then $o$ will lie in the interior of $T$, and therefore in the interior of $\conv(\Gamma)$, as claimed. Since we have already treated  this case, the proof is now complete.

\section{Sharp Examples}\label{sec:examples}
Here we describe some examples which illustrate the sharpness of the inequalities in Theorem \ref{thm:main2}, and consequently establish the sharpness of all the inequalities cited in the introduction.

 We begin with inequality \eqref{eq:2}. Note that there are exactly $10$  triples $(\mathcal{D}, \mathcal{S}, \mathcal{I})$ of nonnegative integers such that $2(\mathcal{D}+\mathcal{S})+\mathcal{I}=6$. Each of these cases may be realized by a spherical curve satisfying the hypothesis of Theorem \ref{thm:main2}. To generate these examples we begin with the curve depicted in Figure \ref{fig:examples} (a). 
 \begin{figure}[h] 
   \centering
   \begin{overpic}[height=1.5in]{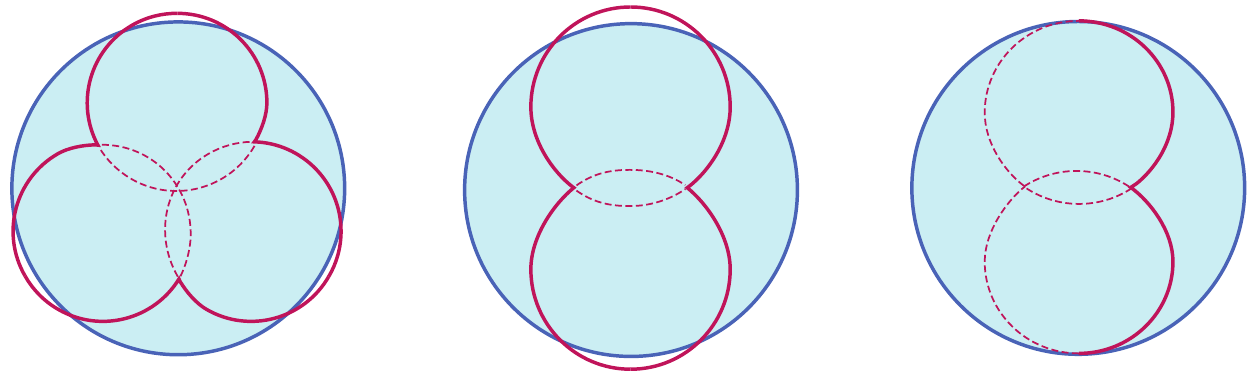} 
   \put(12,-3){$(a)$}
   \put(49,-3){$(b)$}
   \put(85,-3){$(c)$}
   \end{overpic}
   \caption{}
   \label{fig:examples}
\end{figure}
This picture illustrates the  stereographic projection of a spherical curve from the ``North pole" $(0,0,1)$ of $\S^2$ into the tangent plane at the ``South pole" $(0,0,-1)$, and the shaded disk corresponds to the ``Southern hemisphere". Recall that stereographic projections preserve circles. Thus the curve in Figure \ref{fig:examples} (a) which is composed of $3$ circular arcs has no inflections ($\mathcal{I}=0$). Further, it is not hard to see that this curve is disjoint from its antipodal reflection, and so $\mathcal{D}=0$, since this curve is also simple. Furthermore, this curve  contains the origin $o$ in its convex hull, because it intersects the equator in $6$ points which form a polygon passing through $o$. We conclude then that the curve in Figure \ref{fig:examples} (a) is an example of  type $(0,3,0)$. To generate examples of other types, note that we may replace each singularity of this curve with a small loop, see Figure \ref{fig:cusp}. 
 \begin{figure}[h] 
   \centering
    \begin{overpic}[height=0.9in]{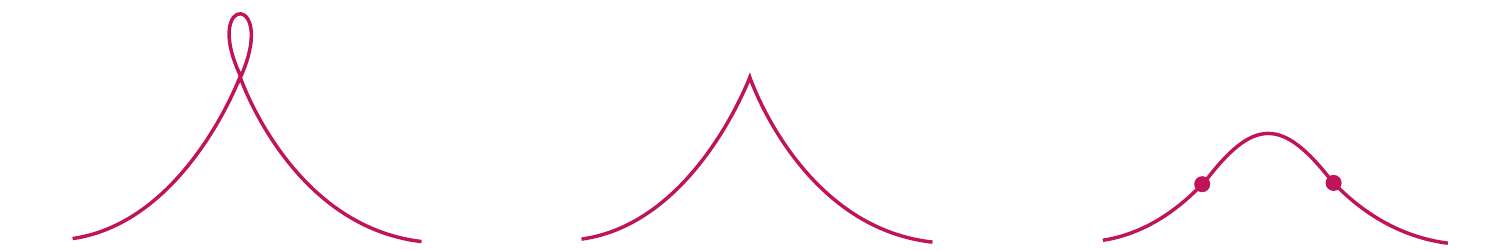} 
    \end{overpic}
   \caption{}
   \label{fig:cusp}
\end{figure}
Alternatively, as Figure \ref{fig:cusp} demonstrates, each singularity may be smoothed away at the cost of precisely two inflections as we proved in Proposition \ref{prop:singularity}. Thus we obtain examples which exhaust all possible $10$ cases where equality may hold in \eqref{eq:2}.

 Similarly, one may establish the sharpness of inequality \eqref{eq:2.5}. Here there are only $6$ cases to consider, since that is precisely the number of possible triplets $(\mathcal{D}^+,\mathcal{S}, \mathcal{I})$ of nonnegative integers such that $2(\mathcal{D}^++\mathcal{S})+\mathcal{I}=4$. Again each case will be possible. The case $(0,2,0)$ is illustrated in Figure \ref{fig:examples} (b), and the other cases may be generated by replacing the singularities by small loops, or a  pair of inflections as described earlier.

 Finally, we consider inequality \eqref{eq:2.75}. Here each of the quantities in the triple $(\mathcal{D}^+, \mathcal{S}, \mathcal{I})$ must be even due to symmetry. Thus, for the equality to hold in \eqref{eq:2.75}, there are only $3$ possible cases: $(0,0,6)$, $(0,2,2)$, and $(2, 0, 2)$. Again, all these cases may be realized. Figure \ref{fig:examples} (c) corresponds to the case $(0,2,2)$. This picture depicts one-half of the curve in a hemisphere. Here the depicted arcs are not exactly circular, but have been perturbed slightly so that they have contact of order $2$ with the boundary of the hemisphere. Now, gluing this curve to its antipodal reflection yields a $\C^2$-regular closed curve of type $(0,2,2)$. The other cases $(2, 0, 2)$ and $(0,0,6)$ may again be treated by replacing the singularity in Figure \ref{fig:examples} (c) with a small loop or a pair of inflections.
 
 \begin{note}\label{note:fr}
 Each of the spherical curves (or tantrices) we constructed here may be integrated to obtain a closed space curve, since these curves all contain the origin in the interior of their convex hull \cite{ghomi:knots}. Thus we will obtain sharp examples for inequalities \eqref{eq:1} and \eqref{eq:1.5}. It would be interesting to know if such examples can be constructed in every knot class, or  whether the lower bound for these inequalities may be improved according to the topological complexity of the curve. Alternatively, one could also ask whether we may improve or balance these inequalities by adding some geometric terms to their right hand sides.  See the paper of Wiener \cite{weiner:fabricius} who adapted the Fabricius-Bjerre formula \cite{fr,halpern} to spherical and space curves. Those results, however, assume that the curves are generic and do not give estimates for the total number of pairs of parallel tangent lines, but only the difference between the number of concordant and discordant pairs.
 \end{note}
 
 \begin{note}
If  a closed  space curve which is generic with respect to the $\C^1$-topology has a pair of discordant parallel tangent lines, then it must have at least two such pairs. This is due to the basic observation that
if a simple closed curve $\Gamma\subset\S^2$  crosses its antipodal reflection $\Gamma'$, i.e., $\Gamma$ does not lie on one side of $\Gamma'$, then $\Gamma$ must have at least two pairs of antipodal points.    To see this,
suppose that $p$ and $p'$ are the  only  antipodal pairs of points of $\Gamma$. Let $\Omega$ be a region bounded by $\Gamma$ in $\S^2$ with area at most $2\pi$. Next, let $A$ and $B$ be the two subarcs of $\Gamma$ determined by $p$ and $p'$. Then the reflection of one of these arcs, say  $A$, which we denote by $A'$, must lie in $\Omega$, since by assumption $\Gamma'$ must enter $\Omega$. Now note that the closed curve $A\cup A'$ is simple and symmetric. Thus it bisects $\S^2$. In particular, the region $\Omega'$ in $\Omega$ which is bounded by $A\cup A'$ must have area $2\pi$. So it follows that $\Omega=\Omega'$, and in particular $\Gamma=\Gamma'$, which  contradicts the assumption that $\Gamma$ crosses $\Gamma'$.
\end{note}

\begin{note}\label{note:normals}
An analogue of Theorem \ref{thm:main} for the number of parallel normal lines of a closed space curve may be obtained as follows.
 If $\gamma\colon\S^1\to\R^3$ is a $\C^2$ unit speed curve with nonvanishing curvature $\kappa$, then its \emph{principal normal} vector field $N\colon\S^1\to\S^2$, given by $N:=T'/\kappa$, is a well-defined spherical curve. Since $T'=\kappa N$,  $\gamma$ has  parallel normal lines at a pair of points, if, and only if, $T$ has parallel tangent lines at the corresponding points. Hence Theorem \ref{thm:main2} may be used to obtain information with regard to the number of parallel normal lines of a space curve.
Recall that $\tau/\kappa$ is the geodesic curvature of  $T$. The  critical points of this quantity, i.e., the zeros of $(\tau/\kappa)'$, have been called \emph{Darboux vertices} \cite{heil}. Now suppose that $\gamma$ is $\C^4$, let
 $\mathcal{P}_N$ be the number of (unordered) pairs of   points $t\neq s$ in $\S^1$ where normal lines of $\gamma$ are parallel, i.e., $N(t)=\pm N(s)$, and   $\mathcal{V}_d$ be the number of Darboux vertices of $\gamma$. We claim that 
\begin{equation}\label{eq:darboux}
2\mathcal{P}_N+\mathcal{V}_d\geq 6.
\end{equation}
To establish this inequality, first note that $N$ is the tantrix of $T$, which is a closed space curve. Thus $N$ contains the origin in its convex hull, and so 
Theorem \ref{thm:main2} may be applied to $N$. Further, assuming that $\gamma$ is parametrized by arclength, we have $N'=-\kappa T +\tau B$, where $B:=T\times N$ is the binormal vector. Thus $\|N'\|=\kappa^2+\tau^2\neq 0$, which means that $N$ has no singularities. So, by Theorem \ref{thm:main2}, $2\mathcal{P}_N+\mathcal{I}\geq 6$, where $\mathcal{I}$ is the number of inflections of $N$. It remains only to note then that
 the geodesic curvature of $N$ vanishes if and only if $\langle N'',N\times N'\rangle$ vanishes, and a straight forward computation shows that this is the case precisely when $(\tau/\kappa)'$ vanishes.  Finally, we should mention that we do not know whether \eqref{eq:darboux} is sharp. A major difficulty in constructing examples here is that, in contrast to the tantrix which has a simple characterization, no complete characterization for the normal spherical image of a closed space curve is known \cite{fenchel}.
\end{note}

\section*{Acknowledgements}
The author thanks  Xiang Ma, Gaiane Panina, John Pardon, Bruce Solomon, Serge Tabachnikov, and Masaaki Umehara  for useful communications.

\bibliographystyle{abbrv}
\bibliography{references}

\end{document}